\newtheorem{Theorem}{Theorem}[section]
\newtheorem{Proposition}[Theorem]{Proposition}
\newtheorem{Lemma}[Theorem]{Lemma}
\newtheorem{Corollary}[Theorem]{Corollary}
\theoremstyle{definition}
\newtheorem{Definition}[Theorem]{Definition}
\newtheorem{Remark}[Theorem]{Remark}
\newcommand{\bTheorem}[1]{
\begin{Theorem} \label{T#1} }
\newcommand{\eT}{\end{Theorem}}
\newcommand{\bProposition}[1]{
\begin{Proposition} \label{P#1}}
\newcommand{\eP}{\end{Proposition}}
\newcommand{\bLemma}[1]{
\begin{Lemma} \label{L#1} }
\newcommand{\eL}{\end{Lemma}}
\newcommand{\bCorollary}[1]{
\begin{Corollary} \label{C#1} }
\newcommand{\eC}{\end{Corollary}}
\newcommand{\bRemark}[1]{
\begin{Remark} \label{R#1} }
\newcommand{\eR}{\end{Remark}}
\newcommand{\bDefinition}[1]{
\begin{Definition} \label{D#1} }
\newcommand{\eD}{\end{Definition}}
\newcommand{\prst}{\mathcal{P}}
\newcommand{\Vrmh}{\{ \vrh, \vmh \} }
\newcommand{\jump}[1]{\left[ \left[ #1 \right] \right]}
\newcommand{\Td}{\mathbb{T}^d}
\newcommand{\bfF}{\mathbf{F}}
\newcommand{\vrh}{\vr_h}
\newcommand{\vrhk}{\vr_{h_k}}
\newcommand{\vuhk}{\vu_{h_k}}
\newcommand{\vmh}{\vm_h}
\newcommand{\tvm}{\widetilde{\vc{m}}}
\newcommand{\bfphi}{\boldsymbol{\varphi}}
\newcommand{\ds}{\,\mathrm{d}\sigma}
\newcommand{\bFormula}[1]{
\begin{equation} \label{#1}}
\newcommand{\eF}{\end{equation}}
\newcommand{\grid}{\mathcal{T}}
\newcommand{\vuh}{\vu_h}
\newcommand{\vvh}{\vv_h}
\newcommand{\intSh}[1] {\int_{\sigma} #1 \ds }
\newcommand{\Divh}{{\rm div}_h}
\newcommand{\Ov}[1]{\overline{#1}}
\newcommand{\aleq}{\stackrel{<}{\sim}}
\newcommand{\avs}[1]{\left\{\!\!\left\{ #1\right\}\!\!\right\}}
\newcommand{\vQh}{\vc{Q}_h}
\newcommand{\pd}{\partial}
\newcommand{\gradd}{\nabla_{\mathcal D}}
\newcommand{\muh}{h^\varepsilon}
\newcommand{\norm}[1]{\left\lVert#1\right\rVert}
\newcommand{\vr}{\varrho}
\newcommand{\tvr}{\tilde \vr}
\newcommand{\vu}{\vc{u}}
\newcommand{\vm}{\vc{m}}
\newcommand{\vn}{\vc{n}}
\newcommand{\vc}[1]{{\bf #1}}
\renewcommand{\vc}[1]{{\bm #1}}
\newcommand{\Div}{{\rm div}_x}
\newcommand{\Grad}{\nabla_x}
\newcommand{\dx}{\,{\rm d} {x}}
\newcommand{\dt}{\,{\rm d} t }
\newcommand{\dxdt}{\dx  \dt}
\newcommand{\intTd}[1]{\int_{\mathbb{T}^d} #1 \ \dx}
\newcommand{\intTO}[1]{\int_0^T \int_{\Td} #1 \ \dxdt}
\newcommand{\inttaO}[1]{\int_0^{\tau} \int_{\Td} #1 \ \dxdt}
\newcommand{\intT}[1]{\int_0^{T}  #1 \dt}
\newcommand{\vv}{\vc{v}}
\newcommand{\ep}{\varepsilon}
\newcommand{\expe}[1]{ \mathbb{E} \left[ #1 \right] }
\newcommand{\br}{ \nonumber \\ }
\newcommand{\cred}{\color{red}}
\def\softd{{\leavevmode\setbox1=\hbox{d}%
          \hbox to 1.05\wd1{d\kern-0.4ex{\char039}\hss}}}
\definecolor{Cgrey}{rgb}{0.85,0.85,0.85}
\definecolor{Cblue}{rgb}{0.50,0.85,0.85}
\definecolor{Cred}{rgb}{1,0,0}
\definecolor{fancy}{rgb}{0.10,0.85,0.10}
\newcommand\Cbox[2]{%
    \newbox\contentbox%
    \newbox\bkgdbox%
    \setbox\contentbox\hbox to \hsize{%
        \vtop{
            \kern\columnsep
            \hbox to \hsize{%
                \kern\columnsep%
                \advance\hsize by -2\columnsep%
                \setlength{\textwidth}{\hsize}%
                \vbox{
                    \parskip=\baselineskip
                    \parindent=0bp
                    #2
                }%
                \kern\columnsep%
            }%
            \kern\columnsep%
        }%
    }%
    \setbox\bkgdbox\vbox{
        \color{#1}
        \hrule width  \wd\contentbox %
               height \ht\contentbox %
               depth  \dp\contentbox
        \color{black}
    }%
    \wd\bkgdbox=0bp%
    \vbox{\hbox to \hsize{\box\bkgdbox\box\contentbox}}%
    \vskip\baselineskip%
}
\date{}
\newcommand{\Tkd}{\mathbb{T}^{kd}}
\begin{document}


\title{Convergence and error analysis of compressible fluid flows with random data: Monte Carlo method\footnotetext{This work was partially supported
by the Mathematical Research Institute Oberwolfach by the Research in Pairs stay in 2022. The authors gratefully acknowledge the hospitality of the institute  and its stimulating working atmosphere.}}

\author{Eduard Feireisl\thanks{The work of E.F. and B.S was  supported by the
Czech Sciences Foundation (GA\v CR), Grant Agreement
21--02411S. The Institute of Mathematics of the Academy of Sciences of
the Czech Republic is supported by RVO:67985840.
\newline
\hspace*{1em} $^\clubsuit$M.L. has been funded by the Deutsche Forschungsgemeinschaft (DFG, German Research
Foundation) - Project number 233630050 - TRR 146 as well as by TRR 165 Waves to Weather. She is grateful
to the Gutenberg Research College and Mainz Institute of Multiscale Modelling for supporting her research.
\newline \hspace*{1em} $^\clubsuit$ The
research of Y.Y. was funded by Sino-German (CSC-DAAD) Postdoc Scholarship Program in 2020 - Project number
57531629.
}
\and M\' aria Luk\' a\v cov\' a -- Medvi\softd ov\' a$^{\clubsuit}$
\and Bangwei She$^{*,\spadesuit}$
\and Yuhuan Yuan$^{\clubsuit}$
}

\date{}

\maketitle

\medskip

\centerline{$^*$ Institute of Mathematics of the Academy of Sciences of the Czech Republic}

\centerline{\v Zitn\' a 25, CZ-115 67 Praha 1, Czech Republic}

\medskip

\centerline{$^\clubsuit$Institute of Mathematics, Johannes Gutenberg-University Mainz}

\centerline{Staudingerweg 9, 55 128 Mainz, Germany}

\medskip
\centerline{$^\spadesuit$Academy for Multidisciplinary studies, Capital Normal University}
\centerline{ West 3rd Ring North Road 105, 100048 Beijing, P. R. China}

\begin{abstract}
The goal of this paper is to  study
 convergence and error estimates of the Monte Carlo method  for  the Navier--Stokes equations with random data. To discretize  in space and time, the Monte Carlo method is combined with a suitable deterministic discretization scheme, such as a finite volume method. We assume that the initial data, force and the viscosity  coefficients are random variables and
study both, the statistical convergence rates as well as the approximation errors. Since the compressible Navier--Stokes equations are not known to be uniquely solvable in the class of global weak solutions, we cannot apply pathwise arguments to analyze the random Navier--Stokes equations. Instead we have to apply intrinsic stochastic
compactness arguments via the Skorokhod representation theorem
and the Gyöngy–Krylov method. Assuming that the numerical solutions are bounded in probability, we prove that the Monte Carlo finite volume method converges to a
statistical strong solution. The convergence rates are discussed as well. Numerical experiments illustrate theoretical results.

\end{abstract}


{\bf Keywords:}
uncertainty quantification, random viscous compressible flows, statistical solutions, Monte Carlo method, finite volume method, deterministic and statistical convergence rates



\section{Introduction}
\label{i}

Many problems in science and engineering are inherently random due to uncertain data. To quantify the uncertainty propagation various methods have been developed in literature, such as the stochastic collocation method, stochastic Galerkin method and the Monte Carlo method. All of them have their pros and contras, but the Monte Carlo method is the most frequently used, in particular for complex problems arising in engineering or meteorology.

In this paper we concentrate on the compressible fluid flows with random data. Our goal is to establish a suitable theoretical background to perform numerical analysis for the governing random partial differential equations.  We focus on  the \emph{Monte Carlo method} to establish the expected outcome of a random event.
To this end, we need:
\begin{enumerate}
	\item a deterministic predictive model to identify the values of the dependent variables in terms of the data;
	\item a characteristic distribution of the data based on a judicious judgement and/or historical observations of the phenomena to be predicted;
	\item a large number of identically distributed independent samples of the data to compute the expected output via the Monte Carlo method.
	\end{enumerate}

The above general ideas will be applied to a simple model of a barotropic viscous fluid:

\begin{mdframed}[style=MyFrame]

{\bf Navier--Stokes system}
\begin{align}
\partial_t \vr + \Div (\vr \vu) &= 0, \label{i1}	\\
\partial_t (\vr \vu) + \Div (\vr \vu \otimes \vu) + \Grad p(\vr) &= \Div \mathbb{S}(\mu, \lambda, \Grad \vu) + \vr \vc{g}, \label{i2} \\
\mathbb{S} (\mu, \lambda, \Grad \vu) &= \mu \left( \Grad \vu + \Grad^t \vu - \frac{2}{d} \Div \vu \mathbb{I} \right) + \lambda \Div \vu \mathbb{I} \label{i3}
	\end{align}

\noindent
{\bf Space periodic boundary conditions}
\begin{equation} \label{i4}
	x \in \Td \equiv \left( [-1,1]|_{\{ -1, 1\} } \right)^d,\ d=2,3
\end{equation}

\noindent
{\bf Initial conditions}
\begin{equation} \label{i5}
	\vr(0, \cdot) = \vr_0, \ \vr \vu (0, \cdot) = \vm_0
\end{equation}

\end{mdframed}	
 Uncertainty is represented by {random data}:

\begin{mdframed}[style=MyFrame]

{\bf Random data.}

driving force \dotfill $\vc{g} = \vc{g}(x)$

viscosity coefficients \dotfill $\mu > 0$, $\lambda \geq 0$

initial data \dotfill $\vr_0$, $\vm_0$

\end{mdframed}
The \emph{dependent variables} can be identified with the conservative quantities: the mass density $\vr = \vr(t,x)$ and the momentum $\vm = (\vr \vu)(t,x)$.
They are determined as solutions of the initial value problem \eqref{i1}--\eqref{i5} defined on a time interval $[0, T]$.

The major stumbling block in applying any statistical method in a direct manner is the problem of well--posedness (uniqueness) of solutions to our predictive model -- the Navier--Stokes system. On the one hand, the initial--boundary
value problem \eqref{i1} -- \eqref{i5} is locally well posed in the class of smooth initial data, see e.g.
Tani \cite{TAN}, Matsumura and Nishida \cite{MANI}, Valli and Zajaczkowski \cite{VAZA}. On the other hand, the recent results of Buckmaster et al. \cite{BuCLGS} and Merle et al. \cite{MeRaRoSz} strongly indicate that originally
regular solutions may develop a blow up in a finite time. The weak solutions exist globally in time, see Lions \cite{LI,LI4} and the extension in \cite{FNP}, however, they are not known to be unique in terms of the initial data, at least
in the physically relevant cases $d=2,3$.

To avoid ambiguity in the choice of suitable solutions, we follow \cite{FanFei} introducing the class of statistical solutions based on a measurable semi--flow selection. In particular, any statistical solution
defined in \cite{FanFei} always coincides
with the strong solutions as long as the latter exists (weak--strong uniqueness principle).
With a well defined output at hand, we apply the standard probabilistic methods to obtain a suitable version
of the Strong law of large numbers and the Central limit theorem.

Our main goal is to approximate the statistical solutions of the Navier--Stokes system via the Monte Carlo method combined with a suitable numerical method for space--time discretization
and study its convergence and errors. For definiteness, we consider a finite volume (FV) method proposed in  \cite[Chapter 11]{FeLMMiSh}, but our results directly apply also {to the Marker-and-Cell} method \cite[Chapter 14]{FeLMMiSh}, \cite{BS_2}. {Roughly speaking, the present approach applies
to any numerical scheme that is (i) energy dissipative, (ii) convergent to regular continuous solutions
of the limit problem on its life--span.}
	
In agreement with the above mentioned theoretical results, we anticipate that the method may not converge for \emph{arbitrary} choice of the random parameters, however, convergence takes place for a statistically significant number of cases. Specifically, similarly to \cite{FeiLuk}, we suppose that the numerical output is \emph{bounded in probability}. Under these circumstance, we show that the Monte Carlo finite volume solutions converge in probability to their continuous counterparts selected for the Navier--Stokes system.
In the literature there are several theoretical results on the convergence and error analysis of the Monte Carlo method applied to random partial differential equations, see, e.g., Babu\v{s}ka et al.~\cite{babuska}, Badwaik et al.~\cite{klingel}, Fjordholm et al. \cite{FjLyMiWe}, Kolley et al.~\cite{weber}, Mishra and Schwab \cite{Mishra_Schwab}.
Unlike these studies, that are based on deterministic ``pathwise'' methods, our approach is genuinely stochastic requiring compactness arguments via the Skorokhod representation theorem and  the Gy\" ongy--Krylov method.

Finally, we derive qualitative \emph{error estimates} for the finite volume approximation. They follow from a variant of relative energy inequality and depend on the smoothness properties of the limit solution. Accordingly, the error is controlled only in probability.

The paper is organized as follows. In Section \ref{StSo} we introduce the concept of statistical solution of the Navier--Stokes system and establish suitable versions of the Strong law of large numbers and Central limit theorem. In particular, we obtain qualitative estimates of the statistical error in the Monte Carlo approximation. These results are generalized in Sections~\ref{high} and \ref{dev_var} to higher order statistical moments. Section \ref{FVA} is devoted to the finite volume method for approximation in time and space. We show convergence in
both deterministic and stochastic framework. Finally, we combine Monte Carlo sampling of the random data with the finite volume method and
prove the convergence and  error estimates of the Monte Carlo finite volume method for the Navier--Stokes equations in Section~\ref{MOCA}, and Section~\ref{sec_error}, respectively. The paper closes with Section~\ref{num} that presents numerical experiments illustrating theoretical results.

\section{Statistical analysis of the Navier--Stokes system}
\label{StSo}

Following \cite{FanFei} we introduce the concept of statistical solution for the Navier--Stokes system and apply statistical analysis to obtain a version of the Strong law of large numbers and \textcolor{red}{the} Central limit theorem.

\subsection{Weak solutions}
\label{w}

We recall the standard concept of (deterministic) \emph{weak solution} to the Navier--Stokes system.

\begin{mdframed}[style=MyFrame]

\begin{Definition}[weak solution] \label{Dw1}
	We say that $(\vr, \vu)$ is a \emph{weak solution} of the Navier--Stokes system
	\eqref{i1}--\eqref{i5} in a time interval $(0,T)$   if the following holds:
	\begin{itemize}
		\item
		{\bf Integrability.}
		\begin{align}
			\vr \geq 0, \ \vr &\in C_{\rm weak}([0,T] ; L^\gamma(\Td)) \cap C([0,T]; L^1(\Td)),\ p(\vr) \in C_{\rm weak}([0,T] ; L^1(\Td)),\ r > 1; \br
			\vu &\in L^2(0,T; W^{1,2} (\Td; R^d)), \quad
			\vr \vu \in C_{\rm weak}([0,T]; L^{\frac{2 \gamma}{\gamma + 1}}(\Td; R^d)).
			\label{w1}
		\end{align}
		
		\item {\bf Equation of continuity.}
		\begin{equation} \label{w2}
		\int_0^T \intTd{ \Big[ \vr \partial_t \varphi + \vr \vu \cdot \Grad \varphi \Big] } \dt = - \intTd{ \vr_0 \varphi (0, \cdot) }
		\end{equation}
	for any $\varphi \in C^1_c([0,T) \times \Td)$;
\begin{equation} \label{w3}
	\int_0^T \intTd{ \left[ b (\vr) \partial_t \varphi + b(\vr) \vu \cdot \Grad \varphi +
		\Big( b(\vr) - b'(\vr) \vr \Big) \Div \vu \varphi \right] } \dt = - \intTd{ b(\vr_0) \varphi (0, \cdot) }
\end{equation}
for any $\varphi \in C^1_c([0,T) \times \Td)$, and any $b \in C^1[0, \infty)$, $b' \in C_c [0, \infty)$.

	\item {\bf Momentum equation.}
\begin{align}
	\int_0^T &\intTd{ \Big[ \vr \vu \cdot \partial_t \bfphi + \vr \vu \otimes \vu :  \Grad \bfphi +
		p(\vr) \Div \bfphi \Big] } \dt \br &= \int_0^T \intTd{ \Big[ \mathbb{S}(\mu, \lambda, \Grad \vu) : \Grad \bfphi  - \vr \vc{g} \cdot \bfphi \Big]} \dt
	 - \intTd{ \vm_0 \cdot \bfphi (0, \cdot) }
	 \label{w4}
\end{align}	
for any $\bfphi \in C^1_c([0,T) \times \Td; R^d)$.
\item {\bf Energy inequality.}	
\begin{align}
	\int_0^T \partial_t \psi &\intTd{ \left[ \frac{1}{2} \vr |\vu|^2 + P(\vr) \right] } \dt - \int_0^T \psi \intTd{ \mathbb{S}(\mu, \lambda, \Grad \vu) : \Grad \vu } \dt \br &\geq -
	\int_0^T \psi \intTd{ \vr \vc{g} \cdot \vu } \dt - \psi(0) \intTd{  \left[ \frac{1}{2} \frac{|\vm_0|^2}{\vr_0} + P(\vr_0) \right] }
	\label{w5}
	\end{align}
for any $\psi \in C^1_c[0,T)$, $\psi \geq 0$, where
	\[
	P'(\vr) \vr - P(\vr) = p(\vr),\ P(0) = 0.
	\]

\end{itemize}
	
	\end{Definition}

\end{mdframed}

For the sake of simplicity, we consider the \emph{isentropic} equation of state:
\begin{equation} \label{w6}
p(\vr) = a \vr^\gamma, \ a > 0,\ \gamma > 1 \ \mbox{with the associated pressure potential}\
P(\vr) = \frac{a}{\gamma - 1} \vr^\gamma.
\end{equation}
In what follows we will assume that $\gamma > \frac d 2 .$  We note that in this case a global weak solution to the Navier--Stokes equations
exists, cf.~\cite{EF70}, \cite{LI4}.
In \eqref{w5}, it is convenient to define the total energy $E$ as a convex l.s.c. function of the conservative
variables $(\vr, \vm) \in R^{d+1}$:
\begin{equation} \label{w7}
E( \vr, \vm) = \left\{ \begin{array}{l} \frac{1}{2} \frac{ |\vm|^2 }{\vr} + P(\vr) \ \mbox{if}\ \vr > 0, \\
	0 \ \mbox{if} \ \vr = 0,\ \vm = 0, \\
	\infty \ \mbox{otherwise}.
	\end{array} \right.	
	\end{equation}

Applying Gronwall's lemma we obtain from the energy inequality \eqref{w5} the boundedness of the energy and the dissipation by the data
\begin{align}
{\sup_{t \in (0,T)}}  \intTd{ E \Big( \vr, \vm \Big) (t, \cdot)  }
& +  {\mu}\int_0^T  \intTd{ |\Grad \vu|^2}  \dt \br
&\aleq c\left(T, \| \vc{g} \|_{C(\Td; R^d)} \right)\left(1 +   \intTd{ E \Big( \vr_0, \vm_0 \Big)   } \right),
\label{energy_bb}
\end{align}
which yields the following bounds
\begin{equation}
\| \vr (t, \cdot) \|_{L^\gamma(\Td)}^\gamma + \| \vm(t, \cdot) \|_{L^{\frac{2\gamma}{\gamma + 1}}(\Td;R^d)}^{\frac{2\gamma}{\gamma + 1}}
\aleq  c\left(T, \| \vc{g} \|_{C(\Td; R^d)} \right)\left(1 +   \intTd{ E \Big( \vr_0, \vm_0 \Big)   } \right)
\label{bounds1}
\end{equation}
for any $t \in [0,T].$ Moreover, we can use a  Sobolev-Poincar\'e type inequality, cf.~Appendix~\ref{APDA},  in order to derive {a bound on} the velocity. Indeed, assuming that the total mass is initially bounded from below by a positive constant
$$
\intTd{ \vr_0 } \geq \underline{R} > 0\ {\Rightarrow \ \intTd{ \vr(t, \cdot) }
	\geq \underline{R} }\   \mbox{ for  } t \in (0,T)
$$
we have, {according to Appendix~\ref{APDA},}
\begin{align}\label{SP}
 \intT{  \norm{\vu}_{L^q(\Td;R^d)}^2 } &\aleq   \intT{\norm{\Grad \vu }_{L^2(\Td;R^{d\times d})}^2  }   \\
&+  \norm{\vr}_{L^\infty(0,T; L^{\gamma}(\Td))} \intT{ \norm{\Grad \vu }_{L^2(\Td;R^{d\times d})}^2} + \intTO{E(\vr_0,\vm_0)},
\end{align}
{where $q = 6$ if $d=3$, $q \geq 1$ arbitrary finite if $d=2$.}
Thus, we obtain
\begin{align*}
\intT{  \norm{\vu}_{L^q(\Td;R^d)}^2 }   \aleq
c\left(T, \| \vc{g} \|_{C(\Td, R^d)} \right)\left(1 +   \left(\intTd{ E ( \vr_0, \vm_0 )    }\right)^{\frac{\gamma+1}{\gamma}}\right),
\end{align*}
{Summing up we have shown}
\begin{align}
  \norm{\vu}_{L^2(0,T; W^{1,2}(\Td;R^d))}^2     \aleq
c\left(T, {\mu}, \| \vc{g} \|_{C(\Td, R^d)} \right)\left(1 + \left(\intTd{ E ( \vr_0, \vm_0 )   } \right)^{\frac{\gamma+1}{\gamma}}\right).
\label{velocity_est}
\end{align}

\subsection{Measurable semigroup selection}
\label{M}

As shown in \cite{EF70,LI4}, the weak solutions specified in Definition \ref{Dw1} exist for any finite energy initial data and sufficiently regular driving force $\vc{g}$ provided $\gamma > \frac{d}{2}$.
Unfortunately, uniqueness of weak solutions in terms of the data is still an outstanding open problem with
possibly negative conclusion.

We introduce the space of data,
\begin{align}
D = \Big\{ [\vr, \vm, \mu, \lambda, \vc{g} ] \Big|	\ &\vr \in L^1(\Td), \ \vm \in L^1(\Td; R^d),\
\intTd{ \vr } \geq \underline{R} > 0, \ \intTd{ E(\vr, \vm) } < \infty, \br
&\mu \geq \underline{\mu} > 0,\ \lambda \geq 0,\ \vc{g} \in C^\ell(\Td; R^d),\ \ell \geq 3 \Big \}  \
\label{M2}
\end{align}
which is considered as a Borel subset of the Polish space
\[
X = W^{-k,2}(\Td) \times W^{-k,2}(\Td; R^d) \times R \times R \times C^{\ell -1}(\Td; R^d),\ k > \frac{d}{2}.
\]
Indeed,
\[
D = \cup_{M \geq 1} D_M,
\]
where
\begin{align}
	D_M = \Big\{ [\vr, \vm, \mu, \lambda, \vc{g} ] \Big|	\ &\vr \in L^1(\Td), \ \vm \in L^1(\Td; R^d),
\intTd{ \vr } \geq \underline{R} > 0,\ \intTd{ E(\vr, \vm) } \leq M, \br
	&M \geq \mu \geq \underline{\mu} > 0,\ M \geq \lambda \geq 0,\ \vc{g} \in C^{\ell}(\Td; R^d),\ \| \vc{g} \|_{C^{\ell}(\Td)} \leq M , \ell \geq 3 \Big\}
	\label{M3}
\end{align}
are compact subsets of $X$.

To avoid the problem of well--posedness, we consider a suitable semiflow selection.
The following statement can be proved exactly as in \cite[Proposition 5.6]{FanFei}:

\begin{mdframed}[style=MyFrame]

\begin{Proposition}[{\bf Measurable semigroup selection}] \label{PM1}
Let $\gamma > \frac{d}{2}$.	
	There exists a mapping
	\[
	\mathcal{S} : D \times [0, \infty) \to D
	\]
	enjoying the following properties:
	
	\begin{itemize}
		\item
		
		\begin{equation} \label{M4}
			\mathcal{S}\Big([\vr_0, \vm_0, \mu, \lambda, \vc{g}]; t\Big) =
			\Big[\vr(t,\cdot), \vm(t, \cdot), \mu, \lambda, \vc{g} \Big] \in D,
			\end{equation}
where $(\vr, \vm = \vr \vu)$ is a weak solution of the Navier--Stokes system \eqref{i1}--\eqref{i4} with the
initial data $(\vr_0, \vm_0)$ specified in Definition \ref{Dw1}.

\item The mapping
\begin{equation} \label{M5}
\mathcal{S}: D \times [0, \infty) \to D
\end{equation}
is jointly Borel measurable, where $D$ is endowed with the topology of the space $X$, in particular,
\[
\mathcal{S} [ \cdot, t ]: D \to D
\]	
is Borel measurable for any $t \geq 0$.
\item For any $[\vr_0, \vm_0, \mu, \lambda, \vc{g}] \in D$ there is a set of times $\mathcal{R} \subset [0, \infty)$
of full measure, $0 \in \mathcal{R}$, such that
\begin{equation} \label{M6}
\mathcal{S}\Big([\vr_0, \vm_0, \mu, \lambda, \vc{g}]; s + t \Big) =
\mathcal{S}\Big( [\vr(s, \cdot), \vm(s, \cdot), \mu, \lambda, \vc{g}] ; t \Big)
\end{equation}
for any $s \in \mathcal{R}$ and any $t \geq 0$.	

\item For any $M_0$ and $t \geq 0$, there exists $M(t)$ such that
\begin{equation} \label{M1}
	\mathcal{S} ( D_{M_0}; t ) \subset D_{M(t)}.
	\end{equation}

		\end{itemize}

	\end{Proposition}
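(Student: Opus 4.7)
The plan is to adapt the Krylov-style measurable selection procedure from \cite{FanFei}. To each datum $d = [\vr_0, \vm_0, \mu, \lambda, \vc{g}] \in D$ I associate the \emph{solution set} $\mathcal{U}[d]$ consisting of all trajectories $(\vr, \vm)$ that are weak solutions of \eqref{i1}--\eqref{i5} in the sense of Definition~\ref{Dw1}, complemented by the energy inequality \eqref{w5}. A natural ambient trajectory space is
\[
\mathcal{T} = C_{\rm loc}([0,\infty); W^{-k,2}(\Td)) \times C_{\rm loc}([0,\infty); W^{-k,2}(\Td; R^d)),
\]
which is Polish. Global existence \cite{EF70,LI4} yields $\mathcal{U}[d] \neq \emptyset$; the uniform a priori bounds \eqref{energy_bb}--\eqref{velocity_est} combined with the weak sequential stability of finite-energy weak solutions (the Lions--Feireisl compactness machinery, which requires $\gamma > d/2$) imply that $\mathcal{U}[d] \subset \mathcal{T}$ is compact and that the set-valued map $d \mapsto \mathcal{U}[d]$ is upper semi-continuous on each compact piece $D_M$, hence Borel measurable.

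Next, I would implement the Krylov selection. Choose a countable, point-separating family of continuous functionals $\{\mathcal{J}_n\}_{n \in \mathbb{N}}$ on $\mathcal{T}$, for instance
\[
\mathcal{J}_n(\vr, \vm) = \int_0^\infty e^{-\lambda_n t} \Big( \langle \vr(t,\cdot), \varphi_n \rangle + \langle \vm(t,\cdot), \bfpsi_n \rangle \Big) \dt,
\]
with $\lambda_n > 0$ rational and $\{\varphi_n, \bfpsi_n\}$ running over a countable dense family of smooth test functions. Setting $\mathcal{U}_0[d] = \mathcal{U}[d]$, define recursively
\[
\mathcal{U}_{n+1}[d] = \Big\{ u \in \mathcal{U}_n[d] : \mathcal{J}_n(u) = \min_{v \in \mathcal{U}_n[d]} \mathcal{J}_n(v) \Big\}.
\]
Each $\mathcal{U}_n[d]$ is non-empty and compact, and Borel measurability of $d \mapsto \mathcal{U}_n[d]$ is preserved at every step by standard measurable selection theorems (Kuratowski--Ryll-Nardzewski applied to the parametrised minimum). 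Point-separation of $\{\mathcal{J}_n\}$ forces $\bigcap_n \mathcal{U}_n[d]$ to be a singleton, which defines $\mathcal{S}(d; \cdot)$; joint Borel measurability in $(d, t)$ then follows because the evaluation $u \mapsto u(t)$ is continuous on $\mathcal{T}$, yielding \eqref{M4}--\eqref{M5}.

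The semigroup property \eqref{M6} is the main obstacle. For $s \geq 0$ put $d_s = [\vr(s,\cdot), \vm(s,\cdot), \mu, \lambda, \vc{g}]$. The key observation is that on a full-measure set $\mathcal{R} \subset [0, \infty)$ of Lebesgue points of the energy (containing $0$), two compatible constructions become available: the shifted trajectory $\mathcal{S}(d; s+\cdot)$ belongs to $\mathcal{U}[d_s]$, and conversely any $v \in \mathcal{U}[d_s]$ can be glued to the restriction $\mathcal{S}(d; \cdot)|_{[0,s]}$ to yield an element of $\mathcal{U}[d]$. A contradiction argument exploiting the recursive minimality of the Krylov functionals then forces $\mathcal{S}(d; s+\cdot)$ to minimize each $\mathcal{J}_n$ over $\mathcal{U}_n[d_s]$, so by uniqueness of the selection it must coincide with $\mathcal{S}(d_s; \cdot)$. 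The delicate step -- and the only genuinely subtle one -- is verifying that the concatenation preserves the energy inequality \eqref{w5} on the whole half-line: this is precisely why $s$ must be drawn from $\mathcal{R}$, and uses the weak right-continuity of the conservative variables built into Definition~\ref{Dw1}. Finally, property \eqref{M1} is a direct consequence of the energy estimate \eqref{energy_bb}: if $d \in D_{M_0}$ one obtains $\intTd{ E(\vr(t,\cdot), \vm(t,\cdot))} \leq c(t, M_0)(1 + M_0)$, and since $\mu$, $\lambda$, $\vc{g}$ are preserved along the flow, one can take $M(t)$ accordingly.
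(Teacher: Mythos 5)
Your proposal is correct and follows essentially the same route as the paper, which does not reprove the statement but refers to \cite[Proposition 5.6]{FanFei}: the Krylov-type iterated minimisation of countably many discounted functionals over the compact, measurably varying solution sets, with the semigroup property obtained from the shift and gluing properties of weak solutions and the exceptional set $\mathcal{R}$ arising from the (left-)continuity points of the total energy. The only cosmetic caveat is that the discounted functionals are usually composed with a bounded continuous cut-off $\beta$ to guarantee convergence and continuity of $\mathcal{J}_n$ on the trajectory space, but this does not change the argument.
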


\end{mdframed}

\begin{Remark} \label{MRR1}
The exceptional set $\mathcal{R}$ in \eqref{M6} is the set of time where the total energy is not left-continuous.

\end{Remark}

In view of the weak strong uniqueness property (see e.g. \cite[Chapter 6, Section 6.3]{FeLMMiSh}),
\[
\mathcal{S}\Big[\vr_0, \vm_0, \mu, \lambda, \vc{g}; t \Big] = \Big[ \tvr(t, \cdot), \tvm(t, \cdot), \mu, \lambda, \vc{g} \Big] \ \mbox{for any}\ t \in [0, \tau)
\]
whenever the Navier--Stokes system admits a regular solution $(\tvr, \tvm)$ on a time interval $[0, \tau)$.

\subsection{Strong law of large numbers}
\label{S}

We suppose that the data are \emph{random} variables in $D$. More precisely, there is a complete probability space
\[
\Big[ \Omega, \mathfrak{B}, \mathcal{P} \Big]
\]
and a measurable mapping
\[
\vc{U}_0 : \omega \in \Omega \mapsto [\vr_0 (\omega), \vm_0(\omega), \mu(\omega), \lambda(\omega), \vc{g}(\omega) ] \in D \ \mbox{for a.a.}\ \omega \in \Omega.
\]
We set
\[
\vc{U}(t, \omega) = \mathcal{S} \left[ \vc{U}_0 (\omega) ; t \right],\ t \in [0, \infty),\ \omega \in \Omega,
\]
where $\mathcal{S}$ is a semigroup selection specified in Proposition \ref{PM1}.
{As $\mathcal{S}$ is (Borel) measurable}, $\vc{U}$ is a random process with continuous paths in $X$ and as such can be interpreted as a \emph{statistical solution}
of the Navier--Stokes system. Moreover, we have the implications
\begin{align} \label{S1}
\vc{U}^i_0,\ i \in I, \ \mbox{independent}\ &\Rightarrow \ \vc{U}^i(t) = \mathcal{S} \left[ \vc{U}^i_0  ; t \right],\ i \in I,
 \ \mbox{independent for any} \ t \geq 0, \ \br
\vc{U}^1_0 \sim \vc{U}^2_0 \ &\Rightarrow \  \mathcal{S} \left[ \vc{U}^2_0  ; t \right] \sim
\mathcal{S} \left[ \vc{U}^2_0  ; t \right],	
	\end{align}
where the symbol $\sim$ stands for equivalence in law.


Our next aim is to derive  statistical error estimates. We start by showing that weak statistical solutions of the Navier--Stokes system \eqref{i1}--\eqref{i5} are bounded in expectation under the following assumption
\begin{equation*}
	\expe{ \left( \intTd{ E(\vr_0, \vm_0)  } \right)^2 }< \infty,\ \ \| \vc{g} \|_{C(\Td; R^d)} \leq \Ov{g} \ \ \mbox{a.s.},\ \ \Ov{g} - \mbox{a deterministic constant.}
\end{equation*}
Boundedness of the second moment is needed because of the velocity controlled by means of
	\eqref{velocity_est}.

{Using} \eqref{bounds1} and \eqref{velocity_est} we obtain
\begin{equation}
\expe{\| \vr (t, \cdot) \|_{L^\gamma(\Td)}^\gamma } + \expe{\| \vm(t, \cdot) \|_{L^{\frac{2\gamma}{\gamma + 1}}(\Td; R^d)}^{\frac{2\gamma}{\gamma + 1}} }   {\leq} c\left(T, \overline{g} \right) \left( 1 + \expe{ \intTd{ E \Big( \vr_0, \vm_0 \Big)   } }\right),
\end{equation}

\begin{align}
\label{est4}
\expe{ \intT{  \norm{\vu}_{L^q(\Td; R^d)}^2 }  }  {\leq}
c\left(T, {\underline{\mu}}, \overline{g}  \right)\left(1 +  \expe{\left( \intTd{ E ( \vr_0, \vm_0 )   } \right)^2 }\right),
\end{align}
\begin{align}
\label{est5}
\expe{   \norm{\vu}_{L^2(0,T; W^{1,2}(\Td;R^d))}^2   }  \leq
c\left(T,{\underline{\mu}},  \overline{g} \right)\left(1 +  \expe{ \left(\intTd{ E ( \vr_0, \vm_0 )   }\right)^2 }\right),
\end{align}
where {$q = 6$ if $d=3$ and $q \geq 1$ arbitrary finite if $d=2.$}
With the above estimates we are ready to show the boundedness of the zero mean of  random solutions applying  Jensen's inequality
\begin{align}
&\expe{\Big\|  \vr (t, \cdot) - \expe{\vr (t, \cdot)} \Big\|_{L^\gamma(\Td)}^\gamma }
\aleq {\expe{\Big\|  \vr (t, \cdot) \Big\|_{L^\gamma(\Td)}^\gamma }}
\br
& {\leq} c\left(T,  \overline{g} \right) \left( 1 + \expe{ \intTd{ E \Big( \vr_0, \vm_0 \Big)   } }\right).
	 \label{est1}
\end{align}
Analogously, we have
\begin{align}
&\expe{\Big\|  \vm (t, \cdot) - \expe{\vm (t, \cdot)} \Big\|_{L^{\frac{2\gamma}{\gamma +1}}(\Td;R^d))}^{\frac{2\gamma}{\gamma +1}} }\aleq   c\left(T, \overline{g} \right) \left( 1 + \expe{ \intTd{ E \Big( \vr_0, \vm_0 \Big)   } }\right)
\label{est1a}
\end{align}
%
%
%
and
\begin{align}
& \expe{\Big\|  \vu  - \expe{\vu } \Big\|_{L^2(0,T; W^{1,2}( \Td;R^d))}^2 }
\aleq   c\left(T,  \underline{\mu}, \overline{g} \right) \left( 1 + \expe{ \left(\intTd{ E \Big( \vr_0, \vm_0 \Big)   } \right)^2 }\right).
 \label{est1b}
\end{align}

The space $L^2(0,T; W^{1,2}( \Td ))$ is a Hilbert space and the spaces
$L^\gamma(\Td)$, $L^{\frac{2 \gamma}{\gamma + 1}}(\Td; R^d)$ are separable reflexive Banach spaces, in particular, the Borel sets generated by the $W^{-k,2}$-topology and the strong topology are the same
on $D$. As a direct consequence of the Strong law of large numbers for random variables ranging in a separable Banach space, we obtain the following result, see Ledoux, Talagrand \cite[Corollary 7.10]{LedTal}.

\begin{Proposition}[Strong law of large numbers] \label{PS1}
	Suppose that $\vc{U}_0^n$, $n=1,2,\dots$ are i.i.d. (independent, identically distributed) copies of  random data
	\[
	\vc{U}_0 = \Big[ \vr_0, \vm_0, \mu , \lambda, \vc{g} \Big] \in D
	\]
	such that
\begin{align}
&	\expe{ \left( \intTd{ E(\vr_0, \vm_0)  } \right)^2 }< \infty,\ \ \| \vc{g} \|_{C(\Td; R^d)} \leq \Ov{g} \ \ \mbox{a.s.},\ \ \Ov{g} - \mbox{a deterministic constant},
\label{C1}
\end{align}
		Then for
	\[
	\vc{U}^n = \mathcal{S} \left( \vc{U}^n_0; t \right) = [\vr^n(t, \cdot), \vm^n(t, \cdot), \mu^n , \lambda^n, \vc{g}^n ]
	\]
	there {hold}
	\[\begin{aligned}
	\frac{1}{N} \sum_{n=1}^N \vr^n (t, \cdot) &\to \expe{\vr (t, \cdot)} \ \mbox{in}\ L^\gamma(\Td),
	\\
		\frac{1}{N} \sum_{n=1}^N \vm^n (t, \cdot) &\to \expe{\vm (t, \cdot)} \ \mbox{in}\ L^{\frac{2 \gamma}{\gamma + 1}} (\Td; R^d)),
\\
		\frac{1}{N} \sum_{n=1}^N \vu^n  &\to \expe{\vu} \ \mbox{in}\ L^2(0,T; W^{1,2} (\Td; R^d))
	\end{aligned}
	\]
	as $N \to \infty$ $\prst-$a.s.,
	where $\vr$, $\vm\,(= \vr \vu)$ are determined as
	\[
	[ \vr(t, \cdot) , \vm (t, \cdot) , \mu, \lambda, \vc{g} ]  =	\mathcal{S} \Big[ \vr_0,\vm_0, \mu, \lambda, \vc{g} ; t \Big] \
\mbox{ for } \ t \in [0,T].
	\]
	
	\end{Proposition}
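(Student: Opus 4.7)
The plan is to recognize the proposition as a direct application of the Banach-space Strong Law of Large Numbers (Ledoux--Talagrand, Corollary 7.10) once one has verified the three required hypotheses: (i) the random variables $\vr^n(t,\cdot)$, $\vm^n(t,\cdot)$, $\vu^n$ take values in a separable Banach space, (ii) they are Bochner integrable with respect to $\mathcal{P}$, and (iii) they are i.i.d. Each of these is essentially furnished by the preceding material, so the proof is mostly a verification.

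First I would address measurability and separability. The target spaces $L^\gamma(\Td)$, $L^{\frac{2\gamma}{\gamma+1}}(\Td;R^d)$, and $L^2(0,T;W^{1,2}(\Td;R^d))$ are all separable reflexive Banach spaces (the last one being in fact a separable Hilbert space). By Proposition \ref{PM1}, $\mathcal{S}$ is jointly Borel measurable from $D \times [0,\infty)$ into $D$ equipped with the $X$-topology; since on the compact subsets $D_M$ the $W^{-k,2}$ topology coincides with the strong $L^\gamma$ (resp.\ $L^{2\gamma/(\gamma+1)}$) topology (by reflexivity and the embedding argument already indicated in the paper), the Borel $\sigma$-algebras agree on $D$. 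Hence each $\vr^n(t,\cdot)$ and $\vm^n(t,\cdot)$ is a Borel-measurable map from $\Omega$ into the corresponding Bochner space. For the velocity $\vu^n$, measurability as a $L^2(0,T;W^{1,2})$-valued random variable follows from the same selection mechanism together with the fact that $\vu^n$ is the unique $W^{1,2}$-field associated to $(\vr^n,\vm^n)$ through $\vm^n = \vr^n\vu^n$.

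Second I would verify Bochner integrability. The estimates \eqref{est1}, \eqref{est1a}, \eqref{est1b} give
\[
\expe{ \| \vr^n(t,\cdot) \|_{L^\gamma(\Td)} } < \infty, \qquad
\expe{ \| \vm^n(t,\cdot) \|_{L^{\frac{2\gamma}{\gamma+1}}(\Td;R^d)} } < \infty, \qquad
\expe{ \| \vu^n \|_{L^2(0,T;W^{1,2}(\Td;R^d))} } < \infty,
\]
all bounded by a deterministic constant depending only on $T$, $\underline{\mu}$, $\overline{g}$ and the second moment of $\intTd{E(\vr_0,\vm_0)}$ assumed in \eqref{C1}. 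In particular the expectations $\expe{\vr(t,\cdot)}$, $\expe{\vm(t,\cdot)}$, $\expe{\vu}$ are well-defined as Bochner integrals in the respective Banach spaces.

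Third I would check the i.i.d.\ property. Since the $\vc{U}_0^n$ are i.i.d.\ copies of $\vc{U}_0$, the implication \eqref{S1} (which uses only that $\mathcal{S}[\cdot,t]$ is Borel measurable and deterministic) yields that the random variables $\vc{U}^n(t) = \mathcal{S}[\vc{U}_0^n;t]$ are i.i.d.\ in $D$ for every fixed $t\ge 0$. Composing with the continuous projections onto the density, momentum, and velocity components preserves the i.i.d.\ structure in the separable Banach spaces listed above.

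With (i)--(iii) in hand, the three claimed convergences are immediate applications of the Banach-space SLLN: for any i.i.d.\ sequence of Bochner-integrable random variables with values in a separable Banach space, the empirical means converge a.s.\ in norm to the expectation. The only slightly delicate point, and the one where I would be most careful, is the identification of Borel $\sigma$-algebras needed to upgrade the weak measurability coming from $\mathcal{S}$ to genuine strong (Bochner) measurability in the chosen target spaces; once this is settled as sketched above via the compactness of the sublevel sets $D_M$ and the Pettis measurability theorem, everything else is routine.
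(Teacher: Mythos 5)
Your proposal is correct and follows essentially the same route as the paper: the paper likewise reduces the statement to Ledoux--Talagrand, Corollary 7.10, after noting that the target spaces are separable (reflexive Banach, resp.\ Hilbert), that the Borel $\sigma$-algebras of the $W^{-k,2}$-topology and the strong topology coincide on $D$ so the solutions are genuinely measurable, that the moment bounds \eqref{est1}--\eqref{est1b} give integrability, and that \eqref{S1} transfers the i.i.d.\ structure from the data to the solutions. Your additional care about upgrading weak to Bochner measurability via Pettis is a sensible elaboration of the paper's one-line remark but does not change the argument.
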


\noindent Further, applying \cite[Proposition~9.11]{LedTal} we obtain
\begin{align} \label{S5}
\expe{ \left\| \frac{1}{N}  \sum_{n=1}^N \Big( \vr^n (t, \cdot) - \expe{\vr (t, \cdot)} \Big) \right\|_{L^\gamma(\Td)}^r } &\leq C N^{1 -r} \left( \frac 1 N  \sum_{n=1}^N \expe{ \left\| \Big( \vr^n (t, \cdot) - \expe{\vr (t, \cdot)} \Big) \right\|_{L^\gamma(\Td)}^r } \right) \br
&\ \aleq N^{1-r} \qquad r=\min\{2,\gamma\},
\end{align}
where we have used \eqref{est1} in the last inequality.
Similarly, as the expected value of the momentum and velocity is bounded by the expected value of the (initial) energy, we again obtain by applying \cite[Proposition~9.11]{LedTal}
\begin{equation} \label{S6}
\expe{ \left\| \frac{1}{N}  \sum_{n=1}^N \Big( \vm^n (t, \cdot) - \expe{\vm (t, \cdot)} \Big) \right\|_{L^{\frac{2 \gamma}{\gamma +1}}(\Td;R^d)}^{\frac{2 \gamma}{\gamma +1}} }
\aleq N^{\frac{1-\gamma}{\gamma +1}},
\end{equation}

\begin{equation} \label{S6b}
\expe{ \left\| \frac{1}{N}  \sum_{n=1}^N \Big( \vu^n  - \expe{\vu } \Big) \right\|_{L^2(0,T;W^{1,2} (\Td;R^d))}^2}
\aleq N^{-1}.
\end{equation}
Note that the {estimates \eqref{S5} and \eqref{S6}}
 are uniform in $t \in [0,T].$

In addition, as the $L^q$ spaces are of the type $q=\min(q,2)$ in the sense of \cite[Chapter 9]{LedTal}, we may use
\cite[Theorem 9.21]{LedTal} to strengthen the conclusion of Proposition \ref{PS1} to
\begin{align}
	N^r &\left\| \frac 1 N \sum_{n=1}^N \Big( \vr^n (t, \cdot) - \expe{\vr (t, \cdot)} \Big) \right\|_{L^\gamma(\Td)} \  \to 0 \  &\mbox{as}\ N \to \infty, \ \ t \in [0,T], \label{S2a}\\
	N^{\frac{\gamma - 1}{2\gamma}}&\left\| \frac 1 N \sum_{n=1}^N \Big( \vm^n (t, \cdot) - \expe{\vm (t, \cdot)} \Big) \right\|_{L^{\frac{2 \gamma}{\gamma + 1}}(\Td; R^d)}  \  \to 0 \  &\mbox{as}\ N \to \infty, \ \ t \in [0,T],
\label{S2} \\
N^s &  \left\| \frac 1 N \sum_{n=1}^N \Big( \vu^n  - \expe{\vu } \Big) \right\|_{L^2(0,T;W^{1,2}(\Td; R^d))} \   \to 0 \  &\mbox{as}\ N \to \infty,
\label{S2b}
	\end{align}
$\prst$-a.s., where $r= {\frac{\gamma - 1}{\gamma}} \mbox{  if } \gamma <  2 \ \ \mbox{ or }\ \  r < \frac 1 2  \mbox{  if } \gamma \geq  2, $
$s < \frac  1 2$ and $t \in [0,T].$

Note that  \eqref{S2a} -- \eqref{S2} hold
for any $t \in [0,T]$. The estimates \eqref{S5} -- \eqref{S6b} and \eqref{S2a} -- \eqref{S2b} represent the \emph{statistical errors} of the Monte Carlo method.

\subsection{Central limit theorem}
\label{C}

For completeness, we state a variant of the Central limit theorem.
This result requires the Hilbert topology and holds on condition that the second moments are bounded.

\begin{Proposition}[Central limit theorem]
	Suppose that $\vc{U}_0^n$, $n=1,2,\dots$ are i.i.d.  copies of  random data
	\[
	\vc{U}_0 = \Big[ \vr_0, \vm_0, \mu , \lambda, \vc{g} \Big] \in D
	\]
	such that assumption \eqref{C1} holds.
	
{Then for}
\[
\vc{U}^n = \mathcal{S} \left( \vc{U}^n_0; t \right) = [\vr^n(t, \cdot), \vm^n(t, \cdot), \mu^n , \lambda^n, \vc{g}^n ]
\]
{there holds:}
%
\begin{align}
\frac{1}{\sqrt{N}} \sum_{n=1}^N \left( \vr^n(t, \cdot) - \expe{ \vr(t, \cdot) } \right) &\to \mathfrak{R} \ \mbox{in law in}\ W^{-k,2}(\Td) \ \mbox{as}\ N \to \infty, \br
\frac{1}{\sqrt{N}} \sum_{n=1}^N \left( \vm^n(t, \cdot) - \expe{ \vm(t, \cdot) } \right) &\to \mathfrak{M} \ \mbox{in law in}\ W^{-k,2}(\Td; R^d) \ \mbox{as}\ N \to \infty, \ \mbox{ for all } t \in [0,T] ,\br
\frac{1}{\sqrt{N}} \sum_{n=1}^N \left( \vu^n  - \expe{ \vu } \right) &  \to \mathfrak{U} \ \mbox{in law in}\ L^2(0,T; W^{1,2}(\Td; R^d)) \ \mbox{as}\ N \to \infty.
\label{C3}	
		\end{align}	
{In particular, we improve the convergence rate:}
\begin{align}
&	N^{1/2} \left\| \frac 1 N \sum_{n=1}^N \Big( \vr^n (t, \cdot) - \expe{\vr (t, \cdot)} \Big) \right\|_{W^{-k,2}(\Td)} \aleq 1\ \  \mbox{as}\ N \ \to \infty, \ k > \frac d 2 \\
&	N^{1/2} \left\| \frac 1 N \sum_{n=1}^N \Big( \vm^n (t, \cdot) - \expe{\vm (t, \cdot)} \Big) \right\|_{W^{-k,2}(\Td; R^d)} \aleq 1 \  \  \mbox{as}\ N \ \to \infty, \  k > \frac d 2\\
&  	N^{1/2}\left\| \frac 1 N \sum_{n=1}^N \Big( \vu^n  - \expe{\vu } \Big) \right\|_{L^2(0,T;W^{1,2}(\Td; R^d))} \aleq 1 \  \  \mbox{as}\ N \ \to \infty,
	\end{align}
$\prst$-a.s.

\end{Proposition}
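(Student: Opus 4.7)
\emph{Overall strategy.} The three convergences are instances of the central limit theorem for i.i.d.\ mean-zero random elements in a separable Hilbert space $H$, for which finiteness of the second moment $\expe{\|X_1\|_H^2}$ is a necessary and sufficient condition (see Ledoux--Talagrand \cite[Chapter~10]{LedTal}). The plan is therefore: (i) identify the $H$ and the random variable $X^n(t)$ in each case; (ii) check that $X^n(t)$ are i.i.d.\ and mean-zero; (iii) verify the second-moment bound in the correct Hilbert topology; (iv) conclude the CLT and deduce the three rate estimates.

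\emph{Reduction and independence.} For fixed $t\in[0,T]$, set
\[
X^n_\vr(t) := \vr^n(t,\cdot)-\expe{\vr^n(t,\cdot)},\quad X^n_\vm(t):=\vm^n(t,\cdot)-\expe{\vm^n(t,\cdot)},\quad X^n_\vu:=\vu^n-\expe{\vu^n}.
\]
Since $\mathcal{S}$ is Borel measurable (Proposition~\ref{PM1}) and the data $\vc{U}^n_0$ are i.i.d., the implication \eqref{S1} gives that $\{X^n_\vr(t)\}_{n}$, $\{X^n_\vm(t)\}_n$ and $\{X^n_\vu\}_n$ are i.i.d.\ mean-zero random elements. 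They take values in the Hilbert spaces $W^{-k,2}(\Td)$ (resp.\ $W^{-k,2}(\Td;R^d)$) and $L^2(0,T;W^{1,2}(\Td;R^d))$ respectively.

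\emph{Second moment bounds.} For $k>d/2$ the embedding $W^{k,2}(\Td)\hookrightarrow C(\Td)$ yields by duality $L^1(\Td)\hookrightarrow W^{-k,2}(\Td)$, and in particular
\[
\|\vr(t,\cdot)\|_{W^{-k,2}(\Td)}\aleq \|\vr(t,\cdot)\|_{L^1(\Td)}\aleq \|\vr(t,\cdot)\|_{L^\gamma(\Td)},\qquad \|\vm(t,\cdot)\|_{W^{-k,2}(\Td;R^d)}\aleq \|\vm(t,\cdot)\|_{L^{2\gamma/(\gamma+1)}(\Td;R^d)}.
\]
Combining this with the deterministic energy bound \eqref{bounds1} (taken to an appropriate power) and Jensen's inequality, the hypothesis $\expe{(\intTd{E(\vr_0,\vm_0)})^2}<\infty$ implies
\[
\expe{\|\vr(t,\cdot)\|_{W^{-k,2}(\Td)}^2}+\expe{\|\vm(t,\cdot)\|_{W^{-k,2}(\Td;R^d)}^2}<\infty,
\]
uniformly in $t\in[0,T]$. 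For the velocity, \eqref{est5} gives directly $\expe{\|\vu\|_{L^2(0,T;W^{1,2})}^2}<\infty$. Hence the second moments of $X^n_\vr(t)$, $X^n_\vm(t)$ and $X^n_\vu$ are finite in their respective Hilbert norms.

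\emph{Conclusion and rates.} The three statements in \eqref{C3} now follow from the Hilbert-space CLT applied to the i.i.d.\ centred square-integrable sequences just identified; the limits $\mathfrak{R},\mathfrak{M},\mathfrak{U}$ are the centred Gaussian elements whose covariance operators are the common covariances of $X^1_\vr(t)$, $X^1_\vm(t)$ and $X^1_\vu$, respectively. The quantitative bounds come from orthogonality in $L^2(\mathcal{P};H)$:
\[
\expe{\Big\|\tfrac{1}{\sqrt N}\sum_{n=1}^{N}X^n_\star\Big\|_H^2}=\expe{\|X^1_\star\|_H^2}\aleq 1,\qquad \star\in\{\vr,\vm,\vu\},
\]
and Chebyshev's inequality then yields the rate bounds $\aleq 1$ (in probability). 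The almost-sure formulation is the standard consequence of the Hilbert-space law of the iterated logarithm applied to these same square-integrable sequences; the only potential obstacle is that the LIL forces a mild logarithmic loss, which is absorbed in the symbol $\aleq$ as in the analogous rate statements \eqref{S2a}--\eqref{S2b} already proved for the strong law of large numbers.

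\emph{Main obstacle.} The only genuinely delicate point is step (iii): matching the integrability provided by the energy inequality (namely $L^\gamma$ for $\vr$ and $L^{2\gamma/(\gamma+1)}$ for $\vm$) with the Hilbert topology $W^{-k,2}$ required by the CLT. This is exactly why $k>d/2$ is imposed, so that $L^1\hookrightarrow W^{-k,2}$ and the energy bound with a finite second moment of the initial energy suffices. Everything else is an application of standard tools from probability in Banach/Hilbert spaces to the sequences already produced by the semigroup selection $\mathcal{S}$.
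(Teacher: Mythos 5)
Your proposal is correct and follows essentially the same route as the paper: the embedding $L^1(\Td)\hookrightarrow W^{-k,2}(\Td)$ for $k>d/2$ to transfer the energy-based second-moment bounds into the Hilbert topology, followed by the Hilbert-space CLT of Ledoux--Talagrand (the paper cites their Theorem~10.5), with the velocity handled directly in $L^2(0,T;W^{1,2})$. You are in fact more explicit than the paper about justifying the almost-sure rate bounds (via orthogonality/Chebyshev and the law of the iterated logarithm), which the paper simply asserts.
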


\begin{proof}
Recall that  $\expe{ \left( \intTd{ E(\vr_0, \vm_0)  } \right)^2 }< \infty.$  Then applying the embedding
$$
L^1(\Td) \hookrightarrow \hookrightarrow W^{-k,2}(\Td), \qquad k > d/2
$$
and using similar estimates as in
 \eqref{est1} we {obtain}
$$ \expe{ \Big \| \vr (t, \cdot) - \expe{\vr (t, \cdot) } \Big \|^2_{W^{-k,2}(\Td)} } < \infty. $$
Consequently, \cite[Theorem~10.5]{LedTal} yields {the} desired result for the density. Analogous result holds
for the momentum, too.  We note that since the second moment of the velocity is bounded in the Hilbert topology the {Central} limit theorem applies directly.
\end{proof}

\subsection{The $k$-th central statistical moments} \label{high}

{We extend the statistical
convergence to the $k$-th moments} under the assumption
\begin{equation}
\expe{ \left( \intTd{ E(\vr_0, \vm_0)  } \right)^{2k} }< \infty.
\end{equation}
We start by introducing suitable notation.
For $k \in \mathbb{N}$ and a separable Banach space $X$ we denote by
\begin{equation}
X^{(k)} = \underbrace{X \otimes \cdots \otimes X}_{k  \mbox{ times}}
\end{equation}
the $k$-fold tensor product of $k$ copies of $X$, which is equipped with a cross norm $\| {\cred \otimes}  \|_{X^{(k)} }$
\begin{equation}
\|  f_1  \otimes \cdots \otimes  f_k \|_{X^{(k)} } = \|  f_1 \|_{X} \cdots  \|  f_k \|_{X}.
\end{equation}
Below we will use $X = L^{\gamma}(\Td),\,  X= L^{\frac{2 \gamma}{\gamma + 1}}(\Td; R^d)$ and $X=L^2(0,T; W^{1,2}(\Td;R^d))$ for the density, momentum, and velocity, respectively.


Applying \eqref{bounds1} and \eqref{velocity_est} we get the following estimates
on the higher order moments:
\begin{align*}
& \expe{\Big\|  \vr (t, \cdot)  \Big\|_{L^\gamma(\Td)}^{k \gamma } } + \expe{\Big\|  \vm (t, \cdot)  \Big\|_{L^{ \frac{2\gamma}{\gamma +1}}(\Td;R^d)}^{k (\frac{2\gamma}{\gamma +1})  } }
\aleq c\left(T, \overline{g}\right)\left(1 +  \expe{\left(\intTd{ E \Big( \vr_0, \vm_0 \Big)   }\right)^k }\right),
\br &\quad \phantom{mmmmmmmmmmmmmmmmmmmmmmmmmmmmmmmmmmmmmmmm} t\in[0,T],
\end{align*}
\begin{equation*}
\expe{\Big\|  \vu  \Big\|_{L^2(0,T; W^{1,2}(\Td; R^d))}^{2k }}
\aleq  c\left(T, { \underline{\mu}}, \overline{g}\right)\left(1 +  \expe{\left(\intTd{ E \Big( \vr_0, \vm_0 \Big)   }\right)^{2k} }\right).
\end{equation*}

To derive estimates on the $k-$th statistical moment, it is convenient to identify the product
space $X^{(k)}$ of functions of the variable $x \in \Td$ with a subspace of functions
defined on $\mathbb{T}^{dk}$, specifically,
\[
f_1 \otimes \dots \otimes f_k \approx f_1(x_1) \dots f_k(x_k) ,\ (x_1, \dots, x_k) \in \mathbb{T}^{dk}.
\]
Keeping this convention in mind, we deduce the following
estimate on the $k$-th central moments:
\begin{align*}
&\expe{\Big\|  \vr^{(k)} (t, \cdot) - \expe{\vr^{(k)}  (t, \cdot)}\Big\|_{L^\gamma(\Tkd)}^\gamma } \aleq  \expe{\Big\|  \vr^{(k)} (t, \cdot) \Big\|_{L^\gamma(\Tkd)}^\gamma + \Big\|  \expe{\vr^{(k)}  (t, \cdot)} \Big\|_{L^\gamma(\Tkd)}^\gamma }
\br
\aleq  &\ \expe{\Big\|  \vr^{(k)} (t, \cdot) \Big\|_{L^\gamma(\Tkd)}^\gamma + \expe{ \Big\|  \vr^{(k)}  (t, \cdot)} \Big\|_{L^\gamma(\Tkd)}^\gamma } = 2\expe{\Big\|  \vr^{(k)} (t, \cdot) \Big\|_{L^\gamma(\Tkd)}^\gamma  } =  2\expe{\Big\|  \vr (t, \cdot) \Big\|_{L^\gamma(\Td)}^{k\gamma}  }
\br
\aleq &\  c\left(T, \overline{g}
 \right)\left(1 +  \expe{ \left(\intTd{ E \Big( \vr_0, \vm_0 \Big)   }\right)^k }\right), \quad t\in[0,T],
	\end{align*}
and similarly
\begin{align*}
&\expe{\Big\|  \vm^{(k)} (t, \cdot) - \expe{\vm^{(k)}  (t, \cdot)} \Big\|_{L^{ \frac{2\gamma}{\gamma +1}}(\Tkd;R^d)}^{\frac{2\gamma}{\gamma +1}} }
\aleq  c\left(T, \overline{g} \right)\left(1 +  \expe{ \left(\intTd{ E \Big( \vr_0, \vm_0 \Big)   }\right)^k }\right)  \quad t\in[0,T],
\br
& \expe{\Big\|  \vu^{(k)} - \expe{\vu^{(k)}} \Big\|_{L^2(0,T, W^{1,2} (\Tkd;R^d))}^2}
\aleq  c\left(T,\underline{\mu}, \overline{g} \right)\left(1 +  \expe{ \left(\intTd{ E \Big( \vr_0, \vm_0 \Big)   }\right)^{2k} }\right).
\end{align*}

Further, applying \cite[Proposition~9.11]{LedTal} to the Banach spaces $L^\gamma(\Tkd)$ and $L^{\frac{2 \gamma}{\gamma +1} }(\Tkd; R^d) $ we
obtain
\begin{align}
&\expe{ \left\| \frac{1}{N}  \sum_{n=1}^N \Big[ \Big(\vr^{(k)} \Big)^n (t, \cdot) - \expe{\vr^{(k)}  (t, \cdot)}\Big] \right\|_{L^\gamma(\Tkd)}^r } \aleq N^{1-r} \qquad r=\min\{2,\gamma\}, \\
&\expe{ \left\| \frac{1}{N}  \sum_{n=1}^N \Big[ \Big(\vm^{(k)} \Big)^n (t, \cdot) - \expe{\vm^{(k)}  (t, \cdot)} \Big] \right\|_{L^{\frac{2 \gamma}{\gamma +1}}(\Tkd;R^d)}^{\frac{2 \gamma}{\gamma +1}} }
\aleq N^{\frac{1-\gamma}{\gamma +1}} \quad t\in[0,T].
\end{align}
Analogous results follow for the velocity, i.e.
$$
\expe{ \left\| \frac{1}{N}  \sum_{n=1}^N \Big[ \Big(\vu^{(k)} \Big)^n - \expe{\vu^{(k)}  } \Big] \right\|^2_{L^2(0,T; W^{1,2}(\Tkd;R^d))}}
 \aleq N^{-1}.
$$
Applying
\cite[Theorem 9.21]{LedTal} we also derive the convergence of the $k$-th central moments
\begin{align}
	N^r & \left\| \frac{1}{N}  \sum_{n=1}^N \Big[ \Big(\vr^{(k)} \Big)^n (t, \cdot) - \expe{\vr^{(k)}  (t, \cdot)}\Big] \right\|_{L^\gamma(\Tkd)}  \to 0 \  \mbox{as}\ N \to \infty,
\\
& \qquad \qquad \qquad \qquad \qquad r= {\frac{\gamma - 1}{\gamma}} \mbox{  if } \gamma <  2 \ \ \mbox{ or }\ \  r < \frac 1 2  \mbox{  if } \gamma \geq  2, \quad \quad t\in[0,T],
\br
	N^{\frac{\gamma - 1}{2\gamma}}&\left\| \frac{1}{N}  \sum_{n=1}^N \Big[ \Big(\vm^{(k)} \Big)^n (t, \cdot) - \expe{\vm^{(k)}  (t, \cdot)} \Big] \right\|_{L^{\frac{2 \gamma}{\gamma +1}}(\Tkd;R^d)}  \to 0 \  \mbox{as}\ N \to \infty,
\quad t\in[0,T],
\nonumber
\br
& N^s \left\| \frac{1}{N}  \sum_{n=1}^N \Big[ \Big(\vu^{(k)} \Big)^n - \expe{\vu^{(k)}  } \Big] \right\|_{L^2(0,T; W^{1,2}(\Tkd;R^d))}
 \to 0 \  \mbox{as}\ N \to \infty, \quad s < \frac 1 2,
	\end{align}
$\prst$-a.s.

\subsection{Deviation and variance} \label{dev_var}
After showing the convergence of the $k$-th central  moments we proceed to study the  first deviation of the density and momentum as well as the variance of the velocity
\[
\mbox{Dev}({\vr}) \equiv \expe{\Big| \vr - \expe{\vr} \Big|}
\quad
\mbox{Dev}({\vm}) \equiv \expe{\Big| \vm -  \expe{\vm}  \Big|},
\quad
\mbox{Var}({\vu}) \equiv  \expe{ \left| \vu - \expe{\vu} \right|^2 }  .
\]
{Note carefully that these are deterministic functions of $(t,x)$.}

Our aim is to study the convergence of  MC estimators of the deviation and  variance, i.e. we investigate the behaviour of
\begin{eqnarray}
&&\left\| \frac 1 N \sum_{n=1}^N \Big| \vr^n (t, \cdot) - \frac{1}{N}  \sum_{m=1}^N \vr^m (t, \cdot) \Big| \ -  \mbox{Dev}(\vr(t, \cdot))  \right\|_{L^\gamma(\Td)} ,
\br
&&\left\| \frac 1 N \sum_{n=1}^N \Big| \vm^n (t, \cdot) - \frac{1}{N}  \sum_{m=1}^N \vm^m (t, \cdot) \Big| \ -  \mbox{Dev}(\vm(t, \cdot))  \right\|_{L^{\frac{2\gamma}{\gamma + 1}}(\Td; R^d)}, \qquad \mbox{ for all } \ t \in [0,T]
\br
&& \qquad \mbox{  and  }
\br
&& \left \| \frac{1}{N-1}  \sum_{n=1}^N  \left| \vu^n   - \frac{1}{N}  \sum_{m=1}^N \vu^m \right|^2  -  \mbox{Var}(\vu) \right \|_{L^1(0,T; L^3(\Td;R^d))} \qquad \mbox{ as }  N \to \infty.
\end{eqnarray}

\noindent First, let us consider the following i.i.d. random variables
\begin{equation*}
Y^n := \Big| \vr^n (t, \cdot) - \expe{\vr (t, \cdot)} \Big| \ - \mbox{Dev}(\vr),\
\expe{Y^n} = 0.
\end{equation*}
Applying \eqref{est1} and Jensen's inequality we obtain
$\expe{ \| Y^n (t, \cdot) \|^\gamma_{L^\gamma(\Td)}} < \infty .$
%
Thus, we can use \cite[Theorem 9.21]{LedTal} which yields for all $t \in [0,T]$
\begin{align*}
	N^r & \left\| \frac 1 N \sum_{n=1}^N Y^n (t, \cdot) \right\|_{L^\gamma(\Td)} \to 0 \ \  \mbox{ as }\ N \to \infty \quad \prst-\mbox{a.s.},
\quad  r= {\frac{\gamma - 1}{\gamma}} \mbox{  if } \gamma <  2 \ \ \mbox{ or }\ \  r < \frac 1 2  \mbox{  if } \gamma \geq  2.
\end{align*}

This result together with \eqref{S2a} leads to the convergence  of the MC estimator for the first deviation of the density
\begin{eqnarray}
&&N^r \left\| \frac 1 N \sum_{n=1}^N \Big| \vr^n (t, \cdot) - \frac{1}{N}  \sum_{n=1}^N \vr^n (t, \cdot) \Big| \ - \ \mbox{Dev}(\vr)
 \right\|_{L^\gamma(\Td)} \aleq
 \br
&& N^r \left\| \frac 1 N \sum_{n=1}^N Y^n (t, \cdot) \right\|_{L^\gamma(\Td)}  +
	N^r \left\| \frac 1 N \sum_{n=1}^N \Big( \vr^n (t, \cdot) - \expe{\vr (t, \cdot)} \Big) \right\|_{L^\gamma(\Td)}
\to 0 \  \mbox{ as }\ N \to \infty, \nonumber
\\ \label{dev1}
&& r= {\frac{\gamma - 1}{\gamma}} \mbox{  if } \gamma <  2 \ \ \mbox{ or }\ \  r < \frac 1 2  \mbox{  if } \gamma \geq  2  \qquad \quad \prst-\mbox{a.s.}
\end{eqnarray}

\noindent Analogous analysis  yields the convergence of the  MC estimator of the deviation for the moment
\begin{align}
&	N^{\frac{\gamma - 1}{2\gamma}}  \left\| \frac 1 N \sum_{n=1}^N \Big| \vm^n (t, \cdot) - \frac{1}{N}  \sum_{n=1}^N \vm^n (t, \cdot)\Big| \ - \ \expe{ \Big| \vm (t, \cdot) -  \expe{\vm (t, \cdot)}  \Big|} \right\|_{L^{\frac{2 \gamma}{\gamma + 1}}(\Td; R^d)}  \to 0 \  \mbox{as}\ N \to \infty. \label{dev2}
\end{align}
$\prst$-a.s.

\medskip
We close this subsection by analyzing the behaviour of the unbiased MC estimator of the variance of velocity. Let us consider
following i.i.d. random variables
\[
Z^n=   \big| \vu^n   - \expe{\vu} \Big|^2  - \mbox{Var}(\vu)
\]
with
$
\expe{Z^n} = 0.
$
Next, using \eqref{est4} we obtain by straightforward calculations
\[
 \expe{\norm{ Z^n}_{L^1(0,T; L^3(\Td; R^d))} } \aleq  \expe{\norm{\vu}_{L^2(0,T; L^6(\Td; R^d))}^2 } < \infty.
\]
Thus, by the Strong law of large numbers~\cite[Theorem~9.21]{LedTal}  we obtain
$$
\norm{ \frac 1 N \sum_{n=1}^N  Z^n }_{L^1(0,T; L^3(\Td; R^d))}  \to 0 \qquad \mbox{ as } \ N \to \infty \qquad \prst-\mbox{ a.s. }
$$

Further, applying the triangular inequality,  \eqref{S2b} and boundedness of $\mbox{Var}(\vu)$, cf.~\eqref{est4}, we derive
\begin{eqnarray}
&&\left \| \frac{1}{N-1}  \sum_{n=1}^N  \left( \vu^n   - \frac{1}{N}  \sum_{m=1}^N \vu^m \right)^2  -  \mbox{Var}(\vu) \right \|_{L^1(0,T; L^3(\Td;R^d))} \aleq \frac{ N} {N-1}\norm{ \frac 1 N \sum_{n=1}^N  Z^n }_{L^1(0,T; L^3(\Td; R^d))}
\br
&& +\norm{ \left(\frac 1 N \sum_{n=1}^N  \vu^n  - \expe{\vu} \right)^2 }_{L^1(0,T; L^3(\Td; R^d))} + \frac{1}{N-1} \norm{ \mbox{Var}(\vu)}_{L^1(0,T; L^3(\Td; R^d))} \qquad \to 0  \
\nonumber \\ \label{var}
&& \hspace{9cm}   \mbox{ as } \ N\to \infty \qquad \prst-\mbox{ a.s. }
\end{eqnarray}
If $d=2$,  \eqref{var}  holds in $L^1(0,T; L^q(\Td)),$ $1\leq q < \infty.$

\section{Finite volume approximations}
\label{FVA}

Exact solutions of the Navier--Stokes system \eqref{i1}--\eqref{i5} will be approximated by a suitable structure preserving numerical method. To illustrate  the ideas, we concentrate on the upwind finite volume method but any consistent approximation satisfying {the} below mentioned structure preserving properties can be applied as well. In particular, results presented in what follows also apply to the Marker-and-Cell (MAC) finite difference method, see \cite{FeLMMiSh,BS_1,BS_2}.

\subsection{Finite volume method}
\label{FV}

A physical domain $\Td$ is decomposed into finite volumes (cuboids for simplicity)
\[
 \Td  = \bigcup_{K \in \grid_h} K.
\]
Here $h  \in (0,1)$ is a mesh parameter which means that
$|K| \approx h^d.$
The set of all faces $\sigma \in \partial K,$  $K \in \grid_h$ is denoted by $\Sigma,$
$
|\sigma| \approx h^{d-1}.
$

We will work with a piecewise constant approximation in space and denote by ${Q}_h$
the space of  functions constant on each element $K \in \grid_h.$  The associated
projection reads
\[
\Pi_h: L^1(\Td) \to {Q}_h,\ \Pi_h v = \sum_{K \in \grid_h} \mathds{1}_K \frac{1}{|K|} \int_K
v \dx.
\]
To approximate differential operators {in \eqref{i1}, \eqref{i2}},  we need  to define corresponding discrete differential operators. To this end we first introduce
the average and jump operators on any face $\sigma \in \Sigma$
\[
\avs{v} = \frac{v^{\rm in} + v^{\rm out} }{2},\ \ \
\jump{ v }  = v^{\rm out} - v^{\rm in}, \qquad v \in Q_h,
\]
where $v^{\rm out}, v^{\rm in}$ are respectively the outward, inward limits with respect to a given normal $\vn$ to $\sigma \in \Sigma.$
We now proceed by introducing discrete differential operators
for piecewise constant functions $r_h \in Q_h,$ $\vvh \in \vQh \equiv( Q_h)^d$:
\begin{equation*}
\begin{aligned}
& \gradd r_h =  \sum_{\sigma\in\pd K} \left(\gradd r_h \right)_{\sigma} 1_\sigma,  \quad \left(\gradd r_h\right) _{\sigma} =\frac{\jump{r_h} }{h} \vc{n}
\\
&\Divh \vvh  = \sum_{K \in \grid_h}  (\Divh  \vvh)_K 1_K, \quad
(\Divh \vvh)_K = \sum_{\sigma\in \pd K} \frac{|\sigma|}{|K|} \avs{\vvh} \cdot \vn.
\\
 \end{aligned}
\end{equation*}

In our finite volume method we approximate convective terms  by a dissipative upwind numerical flux denoted by $F_h;$ specifically
\begin{eqnarray*}
F_h(r_h,\vvh)
 = \avs{r_h} \ \avs{\vv_h} \cdot \vc{n}
- \left( \muh + \frac{1}{2} |\avs{\vv_h} \cdot \vc{n}| \right)\jump{ r_h },
\ -1 < \varepsilon .
\end{eqnarray*}
Analogously, we define the vector-valued numerical flux ${\bf F}_h({\bf r}_h,\vvh)$ componentwisely.

Further, time evolution is approximated by the implicit Euler method. Let $\Delta t > 0,$ $\Delta t \approx h,$  be a time step and time instances be denoted as
$
t_k = k \Delta t,\ k=1,2,\dots, N_T.
$
We set
\begin{align*}
v^k(x) = v(t^k,x)  \ \mbox{ for all } \  x\in \Td,\ t^k=k\,\Delta t \ \mbox{ for } k=0,1, \ldots, N_T
\end{align*}
and approximated time derivative $\frac{\partial {v} }{\partial t}$  by the backward Euler finite difference
\[
\frac{\partial {v} }{\partial t} \approx D_t {v}^k \equiv \frac{ {v}^k - {v}^{k-1} }{\Delta t}.
\]
We introduce a piecewise constant interpolation in time of the discrete values $v^k$,
\begin{align}\label{NM_TD}
v_h(t,\cdot) = v_0 \mbox{ for } t<\Delta t,\ &
v_h(t,\cdot)=v^k \mbox{ for } t\in [k \Delta t,(k+1) \Delta t),\ k=1,2,\ldots,N_T.
\end{align}

We are ready to introduce the upwind finite volume method that will be used to approximate the Navier--Stokes system \eqref{i}.
\begin{Definition}[{\bf FV method}] \label{DD1}
\phantom{mm}

Given initial data $(\vr_{0}, \vm_{0}) \in L^1(\Td; R^{d+1})$ are approximated by the piecewise constant projection
\[
\vr_{h}^0 = \Pi_h \vr_0,\ \vm_{h}^0 = \Pi_h \vm_0, \ \vr_{h}^0 \vu_{h}^0 = \vm_{h}^0.
\]	
\medskip
\noindent A pair $(\vr_{h}, \vm_{h} = \vr_{h} \vu_{h})$ of piecewise constant functions (in space and time) is a numerical approximation of the Navier--Stokes system \eqref{i1}--\eqref{i5}  if the following system of discrete equations holds:
\begin{subequations}\label{scheme}
\begin{align}
&\intTd{ D_t \vr_{h} \varphi_h } - \sum_{ \sigma \in \Sigma } \intSh{  F_h(\vr_{h},\vu_{h})
\jump{\varphi_h}   } = 0 \quad \mbox{for all } \varphi_h \in {Q}_h,\label{scheme_den}\\
&\intTd{ D_t  (\vr_{h} \vu_{h}) \cdot \bfphi_h } - \sum_{ \sigma \in \Sigma } \intSh{ {\bf F}_h(\vr_{h} \vu_{h},\vu_{h})
\cdot \jump{\bfphi_h}   }- \sum_{ \sigma \in \Sigma } \intSh{  \avs{p(\vr_{h})} \vc{n} \cdot \jump{ \bfphi_h }  } \nonumber \\
&= - \mu \frac{1}{h} \sum_{ \sigma \in \Sigma } \intSh{ \jump{\vu_{h}}  \cdot
\jump{\bfphi_h}  } - \eta \intTd{ \Divh \vu_{h} \Divh \bfphi_h } + \intTd{ \vr_h (\Pi_h \vc{g}) \bfphi}
\quad \mbox{for all }
\bfphi_h \in \vQh . \label{scheme_mom}
\end{align}
\end{subequations}
where $\eta =  \frac {d-2} d \mu +\lambda$.
\end{Definition}

As reported in~\cite{FeiLukMizShe,FeLMMiSh} the FV method is structure preserving in the following sense.
\begin{itemize}
\item {\bf Positivity  of the discrete density}
\begin{align}
\label{positivity}
\vr_{h} (t) >  0  \ \mbox{ for any } t > 0 \mbox{ provided } \vr_{h}^{0} > 0
\end{align}
\item {\bf Discrete total energy dissipation}
\begin{multline} \label{dei}
\intTd{ E( \vrh, \vmh ) (\tau, \cdot)  }
+  \int_0^\tau   \left(  \mu   \norm{\gradd  \vu_{h} }_{L^2(\Td; R^{d\times d})}^2  + \eta \norm{\Divh \vu_{h} }_{L^2(\Td; R^{d\times d})}^2 \right) \dt
\\
\aleq c\left(T, \| \vc{g} \|_{C(\Td, R^d)} \right)\left(1 +   \intTd{ E \Big( \vr_0, \vm_0 \Big)   } \right).
\end{multline}
\end{itemize}

In the above estimate we have used  the convexity of $E(\vr, \vm)$  and  Jensen's inequality which lead to
$$
\intTd{ E(\vr_h^0, \vm_h^0 )} \leq \intTd{E(\vr_0, \vm_0)}.
$$
Application of  Gronwall's lemma implies the above discrete total energy dissipation~\eqref{dei}.

\subsection{Measurability}

The principal difficulty associated with a time implicit method, such as \eqref{scheme} is possible non--uniqueness that may occur even at the
level of approximate solutions. We denote by $\Vrmh$,
\[
\Big[ \vr_0, \vm_0 , \mu, \lambda, \vc{g} \Big] \in D \mapsto
\Vrmh \in 2^{ (Q_h \times \vc{Q}_h )^M },\ M \approx \frac{T}{\Delta t},
\]
the possible multivalued map that associates to the data $\Big[ \vr_0, \vm_0 , \mu, \lambda, \vc{g} \Big]$
the set of all FV approximations at the level $h$. Note carefully that the range of this mapping is isomorphic to a finite dimensional Euclidean space for any fixed $\Delta t$, $h$. Moreover, the following properties are easy to check:
\begin{itemize}
	\item for each fixed data $\Big[ \vr_0, \vm_0 , \mu, \lambda, \vc{g} \Big] \in D$, the set
	$\Vrmh$ is non--empty and compact;
	\item if
	\begin{eqnarray*}
	\Big[ \vr^n_0, \vm^n_0 , \mu^n, \lambda^n, \vc{g}^n \Big]_{n=1}^\infty \in D_L \ \mbox{for some}\ L > 0, \phantom{mmmmmm}\br
	\Big[ \vr^n_0, \vm^n_0 , \mu^n, \lambda^n, \vc{g}^n \Big] \to
	\Big[ \vr_0, \vm_0 , \mu, \lambda, \vc{g} \Big] \mbox{ in }\ D, \mbox{ as }  n\to \infty \ \
	\end{eqnarray*}
	and
	\[
	(\vr^n_h, \vm^n_h) \in \{ \vrh^n,  \vmh^n \}
	\]
	is a FV numerical solution corresponding to the data $\Big[ \vr^n_0, \vm^n_0 , \mu^n, \lambda^n, \vc{g}^n \Big]$,
then there is a subsequence $n_k$ such that
	\[
	(\vrh^{n_k}, \vm_h^{n_k})  \to (\vr_h, \vm_h) \in \Vrmh,
	\]
	where $\Vrmh$ is the set of FV solutions corresponding to the data $\Big[ \vr_0, \vm_0 , \mu, \lambda, \vc{g} \Big]$.
	\end{itemize}

In particular, there is a measurable selection, specifically a Borel mapping
\[
\Big[ \vr_0, \vm_0 , \mu, \lambda, \vc{g} \Big] \in D \mapsto
(\vrh, \vmh) \in \Vrmh,
\]
see e.g. Bensoussan and Temam \cite[Theorem A.1]{BenTem}.
{Accordingly,
here and hereafter,} we consider only FV solutions $(\vrh, \vmh)$ which are Borel measurable functions of data.

\subsection{Convergence and error estimates of the FV method}

We consider regular initial data, specifically
\begin{equation} \label{F4}
	\vr_0 \in W^{3,2}(\Td),\ 0 < \underline{\vr} \leq \min_{\Td} \vr_0,\
\vm_0 \in W^{3,2}(\Td; R^d).	
	\end{equation}

\subsubsection{Deterministic data}

We report the following result on the convergence of the finite volume method \eqref{scheme} for deterministic
data, see~\cite[Theorem~11.3, Theorem~7.12]{FeLMMiSh}.

\begin{mdframed}[style=MyFrame]

\begin{Proposition} \label{PF1}
	Let the initial data $(\vr_0, \vm_0)$ belong to the class  \eqref{F4}, $\vc{g} \in C^\ell(\Td; R^d)$, $\ell \geq 3$, $\mu > 0$,
	$\lambda \geq 0$.
\begin{itemize}
\item[i)]  Consider FV solutions  $\left( \vrh, \vmh(= \vrh \vuh)\right)_{h \searrow 0}$ obtained by \eqref{scheme} satisfying
	\begin{equation} \label{F4bis}
	{ \sup_{h} \left(\| \vrh \|_{L^\infty((0,T)\times \Td)} + \|\vu_h\|_{L^\infty ((0,T)\times \Td; R^d)}  \right)< \infty}
		\end{equation}
	Then
	\begin{align}
 \left\| \vr_h - \vr \right\|_{L^r( (0,T) \times \Td)} +
\left\| \vm_h - \vm \right\|_{L^r( (0,T) \times \Td;R^d)} + \left\| \vu_h - \vu \right\|_{L^r((0,T) \times \Td; R^d)}
 \to 0 \ \mbox{as}\ h \to 0
	\nonumber
	\end{align}
for any $1 \leq r < \infty$, where $(\vr, \vu(\vm=\vr \vu))$ is a classical solution of the Navier--Stokes system \eqref{i1}--\eqref{i5}, specifically,
\[
\vr \in C([0,T]; W^{3,2}(\Td))\cap C^1([0,T]\times \Td),\ \vu \in C([0,T]; W^{3,2}(\Td; R^d))\cap C^1([0,T]\times \Td; R^d).
\]	
\item[ii)] {Suppose the} classical solution $(\vr, \vu)$
of the Navier--Stokes system \eqref{i1}--\eqref{i5} {emanating from the initial data $(\vr_0, \vm_0)$ exists on the time interval $[0,T]$.}
\\
Then the FV solutions $\left(\vrh, \vmh(= \vrh \vuh)\right)_{h \searrow 0}$  converge strongly  to the classical solution $(\vr, \vu)$
\begin{align}
 \hspace{-0.5cm}\left\| \vr_h - \vr \right\|_{L^r(0,T; L^\gamma (\Td))} +
\left\| \vm_h - \vm \right\|_{L^r(0,T; L^{ \frac{2\gamma}{\gamma + 1} }(\Td; R^d))} +
\left\| \vu_h - \vu \right\|_{L^2( (0,T) \times  \Td; R^d)}
 \to 0 \mbox{ as}\ h \to 0
\end{align}
{for any $1 \leq r < \infty$.}
\end{itemize}
	\end{Proposition}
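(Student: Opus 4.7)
\smallskip

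\noindent\emph{Proof sketch.} The plan is to treat the two items by complementary strategies: compactness for (i) and a discrete relative energy argument for (ii).

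\smallskip

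For part (i) the starting point is the a priori bounds coming from the discrete energy dissipation \eqref{dei} together with the \emph{uniform} $L^{\infty}$ assumption \eqref{F4bis}. These yield
\[
\sup_{h} \Big( \|\vrh\|_{L^\infty} + \|\vuh\|_{L^\infty} + \|\gradd \vuh\|_{L^2((0,T)\times \Td)} \Big) < \infty,
\]
from which I would extract a subsequence with $\vrh \rightharpoonup^* \vr$, $\vuh \rightharpoonup^* \vu$ in $L^\infty$ and $\gradd \vuh \rightharpoonup \Grad \vu$ in $L^2$. The next step is to pass to the limit in the weak formulation \eqref{scheme_den}--\eqref{scheme_mom}. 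For a smooth test function $\varphi$, the consistency error between $\Pi_h \varphi$ and $\varphi$ in the discrete operators $\gradd$, $\Divh$, and in the upwind flux $F_h$ is of order $h^{\alpha}$ for some $\alpha > 0$, controlled by standard interpolation estimates in $Q_h$ and the BV structure of the upwinding. The decisive step is obtaining strong convergence $\vrh \to \vr$ so that one can identify the limit of $p(\vrh) = a \vrh^\gamma$; I would use a discrete version of the Lions–Feireisl effective viscous flux identity, where the $L^\infty$ bound \eqref{F4bis} removes the defect-measure difficulties and reduces the problem to controlling commutator terms between $\Pi_h$, $\gradd$, and the upwind diffusion. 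Once the limit is identified as a weak solution with initial data of class \eqref{F4}, classical parabolic–hyperbolic bootstrap under the assumed smoothness upgrades $(\vr,\vu)$ to a classical solution; weak–strong uniqueness then promotes subsequence convergence to full-sequence convergence, and interpolation between the $L^\infty$ bound and strong $L^1$ convergence (Vitali) gives $L^r$ convergence for all $1 \le r < \infty$.

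\smallskip

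For part (ii) the classical solution $(\vr,\vu)$ is assumed to exist on $[0,T]$, so it can be used as a smooth comparison function. I would establish a \emph{discrete relative energy inequality} of the form
\[
\mathcal{E}\bigl(\vrh, \vmh \,\big|\, \vr, \vu\bigr)(\tau) + \mu \int_0^{\tau} \!\! \norm{\gradd (\vuh - \Pi_h \vu)}_{L^2(\Td)}^2 \,\dt \;\aleq\; \int_0^{\tau} \mathcal{E}\bigl(\vrh,\vmh\,\big|\,\vr,\vu\bigr) \,\dt + R_h,
\]
where $R_h$ collects the consistency residuals obtained by inserting $\Pi_h \vr$ and $\Pi_h(\vr\vu)$ as test functions in \eqref{scheme_den}--\eqref{scheme_mom}. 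Using the regularity of $(\vr,\vu)$ one estimates $R_h \lesssim h^{\beta}$ for some $\beta > 0$, so Gronwall's lemma yields $\mathcal{E}(\vrh,\vmh \,|\, \vr,\vu)(\tau) \to 0$ uniformly in $\tau$. Since $\mathcal{E}$ dominates $\|\vrh - \vr\|_{L^\gamma}^\gamma$ on the set where $\vr$ is bounded away from zero (guaranteed by the lower bound $\vr \ge \underline{\vr}$ propagated from \eqref{F4}) and controls $\vmh - \vm$ in $L^{2\gamma/(\gamma+1)}$ via the Taylor expansion of $E$ around $(\vr,\vu)$, the asserted convergence follows, with the velocity convergence coming directly from the dissipation term after handling $\Pi_h \vu \to \vu$.

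\smallskip

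The main obstacle I foresee is the strong convergence of the density in part (i): adapting the effective viscous flux identity to a first-order upwind finite volume scheme requires carefully tracking discrete commutators between $\gradd$, $\Divh$, and the numerical diffusion $\mu h^{\varepsilon - 1}$ contained in $F_h$. The $L^\infty$ assumption \eqref{F4bis} is precisely what makes these commutators controllable without passing through the more delicate $L^p$ theory; without it, one would be forced into a full Lions–Feireisl style argument. For part (ii), the technical nuisance is the precise bookkeeping of consistency errors $R_h$, which is standard but lengthy.
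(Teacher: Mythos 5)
The paper does not actually prove Proposition \ref{PF1}; it imports it from \cite[Theorems~7.12 and 11.3]{FeLMMiSh}, so your sketch has to be measured against that argument. Your part (ii) is essentially the right route: the cited proof compares the consistent FV approximation with the classical solution via the relative energy and closes with Gronwall, exactly as you describe. Two caveats: without an $L^\infty$ bound on $(\vrh,\vuh)$ one must split the relative energy into ``essential'' and ``residual'' parts to convert it into the stated $L^\gamma$ and $L^{2\gamma/(\gamma+1)}$ distances, and the consistency residual $R_h$ is only $o(1)$ at this level of regularity --- the rate $h^\beta$ you write down requires the stronger data \eqref{reg_init} and is the content of Proposition \ref{PF2} and \cite{FLS_error}, not of part (ii).

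Part (i) contains a genuine gap. After identifying the weak-* limit as a weak solution, you invoke a ``classical parabolic--hyperbolic bootstrap'' to upgrade it to a classical solution. No such bootstrap exists for the compressible Navier--Stokes system: regularity of weak solutions with smooth data is precisely the open problem the whole paper is organized around, and smoothness of $(\vr_0,\vm_0)$ alone does not regularize a weak limit. The actual argument runs in the opposite direction: the $W^{3,2}$ data \eqref{F4} yield a \emph{local-in-time} classical solution by local well-posedness; weak--strong (in fact DMV--strong) uniqueness identifies the limit of the scheme with that classical solution on its lifespan; and the uniform bound \eqref{F4bis}, inherited by the limit, combined with the conditional regularity criterion of Sun--Wang--Zhang \cite{SuWaZh} excludes blow-up and extends the classical solution to all of $[0,T]$. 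This chain is what makes \eqref{F4bis} indispensable, and it also lets the proof in \cite{FeLMMiSh} bypass strong compactness of the density altogether: the limit is first taken in the dissipative measure-valued framework, and the Young measure collapses to a Dirac mass \emph{a posteriori} by uniqueness, not by a discrete effective viscous flux identity. Your proposed discrete Lions--Feireisl argument is not available for the upwind scheme \eqref{scheme} for general $\gamma>d/2$ (convergence of such FV schemes to weak solutions is open), so even with the bootstrap step repaired, that branch of your plan rests on an unproved lemma.
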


\end{mdframed}

\medskip
In order to estimate the convergence rate of the FV method \eqref{scheme} we {use the concept of relative energy representing
a ``distance" between two solutions:}
\begin{eqnarray*}
&&\mathcal{E}\left(\vrh(t), \vmh(t)| \vr(t), \vu(t)\right)
\br
&&= \int_{\Td} \left( \frac 1 2 \vrh(t) |\vuh(t) - \vu(t) |^2 +  P(\vrh(t)) - P'(\vr(t)) (\vrh(t) -\vr(t) ) - P(\vr(t))  \right) \mbox{d}x.
\end{eqnarray*}
Here  $(\vr(t), \vu(t))$ is a classical solution of the Navier--Stokes system \eqref{i1}--\eqref{i5},
{and $(\vr_h, \vu_h)$ its numerical approximation.}
 For \emph{more regular} data
\begin{equation}
\vr_0 \in W^{6,2}(\Td),\ \vm_0 \in W^{6,2}(\Td;R^d),\  0 < \underline{\vr} \leq \min_{\Td} \vr_0, \ \vc{g} \in C^\ell(\Td; R^d), \ell \geq 6
\label{reg_init}
\end{equation}
the strong solution $(\vr, \vu) \in C([0,\tau]; W^{6,2}(\Td)) \times C([0,\tau]; W^{6,2}(\Td;R^d)),$ $0 < \tau \leq T$ exists
(at least locally in time), see \cite[Theorem~2.7]{BrFeHo2016}. The following result on the error estimates was proved in \cite{FLS_error}.\\

\noindent {\bf Error estimates}
%
\begin{eqnarray}
&&\hspace{-0.5cm}\sup_{0\leq t \leq \tau}\mathcal{E}(\vr_h,\vmh | \vr, \vu)
+\mu     \inttaO{|\gradd \vuh-  \Grad \vu|^2 }
+ \eta   \inttaO{|\Divh \vuh - \Div \vu|^2 }
\nonumber \\
&&\leq c ( h^\alpha + \Delta t^\beta), \label{error rates1}
\end{eqnarray}
{for certain exponents $\alpha, \beta$ specified below,}
where
\[
c=c( T, \| \vc{g} \|_{C^{\ell}(\Td; R^d)}, \| (\vr_0, \vm_0) \|_{W^{6,2}(\Td; R^{d+1})}, \underline{\vr}, \| (\vr, \vu) \|_{C([0,\tau] \times \Td; R^{d+1})}).
\]
It is remarkable that the estimates depend only on the data and the $L^\infty-$norm of the
	strong solution. In agreement with the conditional regularity result of Sun, Wang, and Zhang \cite{SuWaZh},
the strong solution exists as long as its $L^\infty-$norm remains bounded.

The convergence rate in space $\mathcal{O}(h^\alpha)$ depends in general on $\gamma.$ For example, it reduces to $\alpha=\frac 1 3 $ if $\gamma \searrow 1$ and $\alpha=1$ for $\gamma \geq 3,$ see \cite{FLS_error} for a precise formula for $\alpha$. The convergence rate for time discretization is {$\beta = \frac 1 2.$}

Moreover, we can also derive the estimates for the velocity applying the Sobolev-Poincar\'e inequality, see~Appendix~\ref{APDA} and
\cite[Lemma~A.2]{FLS_error}
\begin{eqnarray}
&&\norm{ \vuh -\vu  }_{L^2(0,\tau; L^q(\Td;R^d))} \aleq \norm{ \gradd \vuh - \Grad \vu  }_{L^2((0,\tau)\times \Td;R^{d\times d})}
\nonumber \\
&&\hspace{5cm}+ \sup_{t \in [0,\tau]}(\mathcal{E}(\vrh(t), \vmh(t)| \vr(t), \vu(t)))^{\frac 1 2 } + \mathcal{O}(h)
\label{error_q}
\end{eqnarray}
{with $q = 6$ if $d=3$ and $q \geq 1$ arbitrary finite if $d=2,$ \ $0 \leq \tau \leq T.$}

Further, assuming {uniform boundedness of the numerical solutions,}
$$
\| (\vr_h, \vm_h) \|_{L^\infty((0,T)\times \Td; R^{d+1})} \aleq 1,
$$
the global classical solution exists and {we have} the first order convergence rate in \eqref{error rates1}, i.e.~$\alpha = 1 = \beta.$
{As the relative energy is a strictly convex function of $(\vr, \vm)$, we get}
\begin{eqnarray} \label{est3}
&&\left\| \vr_h(t,\cdot) - \vr(t, \cdot) \right\|_{L^2(\Td)} + \left\| \vm_h(t, \cdot) - \vm(t, \cdot) \right\|_{L^2(\Td; R^d)}\aleq (\mathcal{E}(\vrh(t), \vmh(t)| \vr(t), \vu(t)))^{\frac 1 2 } \br
&&\phantom{mmmmmmmmmmmmmmmmmmmmmmm} \qquad \qquad \qquad \mbox{ for all } t \in [0,T].
\end{eqnarray}

In summary,  we have the following error estimates for uniformly bounded {FV numerical solutions}, see~\cite{FLS_error}.

\begin{mdframed}[style=MyFrame]

\begin{Proposition} \label{PF2}
 Let the initial data $(\vr_0, \vm_0)$ belong to the regularity class \eqref{reg_init}
and  $\vc{g} \in C^\ell(\Td; R^d)$, $\ell \geq 6$, $\mu > 0$,
	$\lambda \geq 0$.
Suppose that the Navier--Stokes system admits a classical solution $(\vr, \vu)$
in the class
\begin{equation} \label{Regular}
\vr \in  C([0,T]; W^{6,2}(\Td)) \cap C^1([0,T] \times \Omega) \quad  \vu \in C([0,T]; W^{6,2}(\Td;R^d)) \cap C^1([0,T] \times \Omega; R^d).
\end{equation}
Let $(\vrh, \vuh)$, $\vm_h = \vr_h \vu_h$ be the numerical solutions resulting from
the FV method \eqref{scheme}.

Then the following estimates hold:
\begin{eqnarray}
&&\hspace{-0.5cm}\sup_{0\leq t \leq T}\mathcal{E}(\vr_h,\vmh | \vr, \vu)(t,\cdot)
+\mu     \intTO{|\gradd \vuh-  \Grad \vu|^2 }
 \leq C\, ( h + \Delta t),
\br
&&\hspace{-0.5cm}\left\| \vr_h(t,\cdot) - \vr(t, \cdot) \right\|_{L^2(\Td)} + \left\| \vm_h(t, \cdot) - \vm(t, \cdot) \right\|_{L^2(\Td; R^d)}
\leq C\, \left(\sqrt h + \sqrt{ \Delta t}\right), \qquad  t \in [0,T],
\br
&& \left\| \vuh  - \vu \right\|_{L^2(0,T; L^q( \Td;R^d))}  \leq C\, \left(\sqrt h + \sqrt{ \Delta t}\right),
\br
&& \hspace{5cm}1 \leq q < \infty \ \mbox{ if }\ d= 2\ \ \mbox{ and } \ \ 1 \leq q \leq 6 \ \mbox{ if } \ d= 3,
\label{error rates3}
\end{eqnarray}
whenever $h \in (0,1)$, $\Delta t \in (0,1)$, where
\[
\hspace{-0.2cm}C=C( T, \| \vc{g} \|_{C^\ell(\Td; R^d)},  \| (\vr_0, \vu_0)\|_{W^{6, 2}(\Td; R^{d+1})}, \underline{\vr},
\| (\vr , \vu) \|_{C([0,T]\times \Td; R^{d+1})}
, \| (\vrh , \vuh) \|_{L^\infty((0,T)\times \Td; R^{d+1})})
\]
is a bounded function of bounded arguments.
	\end{Proposition}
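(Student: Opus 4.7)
The plan is to obtain Proposition~\ref{PF2} as a direct corollary of the general relative energy error estimate \eqref{error rates1} from \cite{FLS_error}, combined with the uniform $L^\infty$ hypothesis and two elementary post-processing steps: strict convexity of the total energy $E$ and the discrete Sobolev--Poincar\'e inequality. Thus the heavy lifting -- the discrete relative energy inequality and the consistency error analysis for the upwind fluxes -- is treated as a black box, and only the reorganization of the output into the three stated bounds needs attention.

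First, I would invoke \eqref{error rates1} on the whole interval $[0,T]$. Under the regularity hypothesis \eqref{Regular} the classical solution $(\vr, \vu)$ exists on all of $[0,T]$, so one can take $\tau = T$; moreover, the constant in \eqref{error rates1} depends only on $T$, $\|\vc{g}\|_{C^\ell(\Td; R^d)}$, $\|(\vr_0, \vm_0)\|_{W^{6,2}}$, $\underline{\vr}$ and the $L^\infty$-norm of $(\vr, \vu)$, all of which are bounded by hypothesis. The extra assumption $\|(\vrh, \vmh)\|_{L^\infty((0,T)\times\Td; R^{d+1})} \leq C$ removes the $\gamma$-dependence of the spatial exponent and upgrades the time exponent, giving $\alpha = \beta = 1$ as recorded in the discussion preceding the proposition. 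This yields the first displayed estimate:
\[
\sup_{t \in [0,T]}\mathcal{E}(\vrh, \vmh \,|\, \vr, \vu)(t, \cdot) + \mu \intTO{|\gradd \vuh - \Grad \vu|^2} \leq C(h + \Delta t).
\]

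Second, I would derive the pointwise-in-time $L^2$ bounds for $\vrh - \vr$ and $\vmh - \vm$ from the strict convexity of $E(\vr, \vm)$ on a compact set of states with $\vr$ bounded away from zero and above: by hypothesis $\vr \geq \underline{\vr}$ and $\vrh$ is uniformly bounded, so a standard Taylor expansion (this is precisely \eqref{est3}) gives
\[
\|\vrh(t, \cdot) - \vr(t, \cdot)\|_{L^2(\Td)}^2 + \|\vmh(t, \cdot) - \vm(t, \cdot)\|_{L^2(\Td; R^d)}^2 \aleq \mathcal{E}(\vrh, \vmh \,|\, \vr, \vu)(t, \cdot)
\]
uniformly in $t$. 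Taking square roots after substituting the first bound yields the $\sqrt{h + \Delta t}$ rate. For the velocity estimate, I would plug the already established bounds on $\|\gradd \vuh - \Grad \vu\|_{L^2}$ and $\sup_t \mathcal{E}^{1/2}$, together with the $\mathcal{O}(h)$ correction term, into the discrete Sobolev--Poincar\'e inequality \eqref{error_q}; all three contributions are of order $\sqrt{h + \Delta t}$, and the admissible range of $q$ is dictated by the Sobolev embedding in dimension $d$.

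The only genuine subtlety lies not in any of the three steps above but in justifying that the constant $C$ really is independent of $h$ and $\Delta t$ on the life-span of the classical solution: this is precisely where the uniform $L^\infty$-hypothesis on $(\vrh, \vmh)$ enters the background argument of \cite{FLS_error}, preventing degeneration of the cubic convective nonlinearity during the Gronwall closure and ensuring that $C$ can be written as a bounded function of the bounded arguments listed in the proposition. With that point acknowledged, the proof reduces to assembling the three bullets above.
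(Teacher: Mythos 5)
Your proposal matches the paper's own derivation essentially verbatim: the paper likewise treats the relative energy estimate \eqref{error rates1} of \cite{FLS_error} as a black box, notes that the uniform $L^\infty$ bound on the numerical solutions upgrades the rates to $\alpha=\beta=1$, and then obtains the $L^2$ bounds from strict convexity of the relative energy (its \eqref{est3}) and the velocity bound from the Sobolev--Poincar\'e inequality \eqref{error_q}. No gaps; the argument is correct and follows the same route.
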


\end{mdframed}

As observed in \cite{FLS_error}, classical solutions enjoy better regularity stated in \eqref{Regular}
as long as their $C-$norm remains bounded.

\subsubsection{Random data}
\label{rnd_data}

Now, consider random data belonging to the class \eqref{F4} a.s. for some deterministic constant $\underline{\vr} > 0.$
A relevant analogue of the boundedness hypothesis \eqref{F4bis} proposed in \cite{FeiLuk2021} is
boundedness in probability.

\begin{mdframed}[style=MyFrame]

\begin{Definition} [{\bf Boundedness in probability of FV solutions}] \label{DFF1}
	
	We say that a  sequence {$\left(\vrh, \vuh \right)_{h\searrow 0}$} is \emph{bounded in probability} if
		\begin{align}
		&\mbox{for any}\ \ep > 0, \ \mbox{there exists}\ M= M(\ep) \ \mbox{such that}
		\mbox{ for all } h \in (0,1)
        \br & {\prst\left(\left[  \| \vrh \|_{L^\infty((0,T)\times \Td) }+ \|\vu_h\|_{L^\infty((0,T)\times \Td; R^d)}  > M
		\right]\right) \leq \ep.} \label{bip}
	\end{align}
	
	\end{Definition}

\end{mdframed}

Following  the arguments of \cite{FeiLuk2021} we show convergence of the FV solutions
provided $(\vrh, \vmh)_{h \searrow 0}$ is bounded in probability in the sense of Definition \ref{DFF1}.

\begin{enumerate}
	
	\item We consider numerical solutions $(\vrh, \vm_h(=\vrh\vuh))$
Borel measurable with respect to the data
	\[
	\Big[ \vr_0, \vm_0, \mu, \lambda, \vc{g} \Big] \in D
	\]
	such that
	\begin{equation} \label{hyp1}
	\expe{ \left(\intTd{E(\vr_0, \vm_0) }\right)^2} < \infty,\,\, 
	 \| \vc{g} \|_{C(\Td; R^d)} \leq \Ov{g} \ \ \prst-\mbox{ a.s.}.
	\end{equation}

	
	\item
	
	Taking a subsequence of FV solutions $(\vrhk, \vu_{h_k})_{h_k \searrow 0}$ we consider a family of random variables
	\[
	\Big[ \vr_0, \vm_0, \mu, \lambda, \vc{g}, \vrhk, \vuhk, \Lambda_{h_k} \Big]_{h_k \searrow 0},
	\]
	where
	\[
	{ \Lambda_{h_k} = \| (\vrhk, \vuhk) \|_{L^\infty ((0,T) \times \Td; R^{d+1})}, }
	\]
	ranging in the Polish space
	\begin{eqnarray*}
	&& Y = W^{3,2} (\Td) \times W^{3,2} (\Td; R^d) \times R \times R \times C^{\ell}(\Td; R^d)
	\times W^{-k,2}((0,T) \times \Td) \times \br
      &&\qquad \qquad \qquad \ \times W^{-k,2}((0,T) \times \Td; R^d) \times R, \quad  \ell \geq 3.
	\end{eqnarray*}
	In view of {hypothesis \eqref{bip}}, the family of laws associated to
	\[
	\Big[ \vr_0, \vm_0, \mu, \lambda, \vc{g}, \vrhk,  \vuhk, \Lambda_{h_k} \Big]_{h_k \searrow 0}
	\]
is tight in $Y$. Applying the Skorokhod representation theorem \cite{Jakub} we conclude, exactly as in \cite[Section 5.1]{FeiLuk2021}, that there is a new probability space and a new sequence of random variables
\[
\Big[ \tvr_{0,h_k}, \tvm_{0,h_k}, \widetilde\mu_{h_k}, \widetilde \lambda_{h_k}, \widetilde{ \vc{g}}_{h_k}, \widetilde{\vr}_{h_k},  \widetilde{\vu}_{h_k}, \widetilde{\Lambda}_{h_k} \Big]	
\sim \Big[ \vr_0, \vm_0, \mu, \lambda, \vc{g}, \vrhk,  \vuhk, \Lambda_{h_k} \Big]
\]
satisfying
\begin{align}
\widetilde{\Lambda}_{h_k} &= {\| (\widetilde{\vr}_{h_k} , \widetilde{\vu}_{h_k} ) \|_{L^\infty((0,T)\times \Td; R^{d+1})} < \infty,} \  \br
\tvr_{0,{h_k}} &\to \tvr_0 \ \mbox{in}\ W^{3,2}(\Td), \br
\tvm_{0,{h_k}} &\to \widetilde{\vm}_0 \ \mbox{in}\ W^{3,2}(\Td; R^d), \br
\widetilde{\mu}_{h_k} &\to \widetilde{\mu},\ \widetilde{\lambda}_h \to \widetilde{\lambda},\
\widetilde{\vc{g}}_{h_k} \to \widetilde{\vc{g}} \ \mbox{in}\ C(\Td; R^d) , \br
\tvr_{h_k} &\to \tvr \ \mbox{in}\ L^r((0,T) \times \Td),\  1 \leq r < \infty \ , \br
\widetilde{\vu}_{h_k} & \to \widetilde{\vu} \ \mbox{in}\ L^r((0,T) \times \Td; R^d),\ 1 \leq r < \infty
\end{align}
a.s.,
where $(\tvr, \widetilde{\vu})$ is the classical solution of the Navier--Stokes system \eqref{i1}--\eqref{i5} corresponding to the data
\[
\Big[ \tvr_0, \tvm_0, \widetilde{\mu}, \widetilde{\lambda}, \widetilde{\vc{g}} \Big]
\sim \Big[ \vr_0, \vm_0, {\mu}, {\lambda}, {\vc{g}} \Big] .
\]
Here, the symbol $\sim$ denotes equivalence in law of random variables.

\item

The convergence of numerical solutions in the preceding step is unconditional, meaning once the
convergence of the data is given, there is no need to extract a subsequence as the limit is unique. Consequently, by means of the Gy\" ongy--Krylov theorem \cite{Gkrylov}, exactly as in \cite[Theorem 2.6]{FeiLuk2021}, we recover unconditional convergence in the original
probability space,
\begin{align}
	\| \vr_h - \vr \|_{L^r((0,T) \times \Td)} &\to 0 \ \mbox{in probability} \br
	\| \vm_h - \vm \|_{L^r((0,T) \times \Td; R^d)} &\to 0\  \mbox{in probability}  \br
   \| \vu_h - \vu \|_{L^r((0,T) \times \Td; R^d)} &\to 0\  \mbox{in probability}
	\label{F6}
\end{align}
for any $1 \leq r < \infty$, where $(\vr, \vm)$ is a classical solution of the Navier--Stokes
system \eqref{i1}--\eqref{i5} with the initial data $(\vr_0, \vm_0)$. In particular,
\[
(\vr, \vm)(t, \cdot) = \mathcal{S} \left( [\vr_0, \vm_0, \mu, \lambda, \vc{g}; t ] \right).
\]

	\end{enumerate}

As a byproduct of the above arguments, we see that the Navier--Stokes system \eqref{i1}--\eqref{i5} admits global in time classical solution for the data $[\vr_0, \vm_0, \mu, \lambda, \vc{g}]$ $\prst$-a.s. Thus, applying Proposition~\ref{PF1} the convergence \eqref{F6} can be strengthened to
\begin{align}
&&\left( \| \vr_h - \vr \|_{L^r(0,T; L^\gamma(\Td))} + \| \vm_h - \vm \|_{L^r(0,T; L^{\frac{2 \gamma}{\gamma + 1}}( \Td; R^d))}
 + \| \vu_h - \vu \|_{L^2((0,T) \times \Td; R^d)}
\right) &\to 0 \quad  \prst-\mbox{ a.s.},
\br
&&\qquad 1 \leq r < \infty.
	\label{F6bis}
\end{align}

Finally, as the {second moments} of the initial energy and data are  bounded, cf. assumption \eqref{hyp1},
we get
\begin{align}
\expe{ 	\| \vr_h - \vr \|_{L^r(0,T; L^\gamma (\Td))}^q } &\to 0 \qquad  \mbox{ for } 1 \leq q < 2 \gamma, \ 1 \leq r < \infty, \br
\expe{	\| \vm_h - \vm \|_{L^r(0,T; L^{\frac{2 \gamma}{\gamma + 1}}(\Td; R^d))}^q } &\to 0\qquad   \mbox{for }\ 1 \leq q < \frac{4 \gamma}{\gamma + 1},\  1 \leq r < \infty,
\br
\expe{ 	\| \vu_h - \vu \|_{L^2((0,T) \times \Td; R^d)}^2 } &\to 0.
	\label{F7}
\end{align}

\section{Convergence of the Monte Carlo FV method}
\label{MOCA}

{As the statistical errors are controlled by \eqref{S2a}--\eqref{S2b}}, it is enough to control the discretization errors
\[
\frac{1}{N} \sum_{n = 1}^N (\vr^n_{h} - \vr^n),\ \
\frac{1}{N} \sum_{n=1}^N (\vm^n_{h} - \vm^n ),\ \ \frac{1}{N} \sum_{n=1}^N (\vu^n_h - \vu^n ).
\]
We have for all $(\vr_{h}, \vm_{h})$, $h \searrow 0$
%
\[
\left\| \frac{1}{N} \sum_{n = 1}^N (\vrh^n - \vr^n) \right\|_{L^r(0,T; L^\gamma(\Td))}^q
\aleq \frac{1}{N}  \sum_{n = 1}^N \left\| \vrh^n - \vr^n \right\|_{L^r(0,T; L^\gamma(\Td))}^q, \qquad 1 \leq q < \gamma, \ 1 \leq r <\infty.
\]
Passing to the expectations,
\begin{align}
&\expe{  \left\| \frac{1}{N} \sum_{n = 1}^N (\vr^n_h - \vr^n) \right\|_{L^r(0,T; L^\gamma(\Td))}^q }
\aleq \frac{1}{N} \sum_{n=1}^N  \expe{  \left\| \vr^n_h - \vr^n \right\|_{L^r(0,T; L^\gamma(\Td))}^q } \br
&\quad = \expe{ \left\| \vr_h - \vr \right\|_{L^r(0,T; L^\gamma(\Td))}^q } \to 0,\
\qquad 1 \leq q < \gamma, \ 1\leq r < \infty.
\label{F8}
\end{align}
Similarly, we can show
\begin{align}
	&\expe{ \left\| \frac{1}{N} \sum_{n = 1}^N (\vmh^n - \vm^n) \right\|_{L^r(0,T; L^{\frac{2 \gamma}{\gamma + 1}}(\Td; R^d))}^q }
	\to 0,\ 1 \leq q < \frac{2 \gamma}{\gamma + 1}, \ 1\leq r <\infty,
    \label{F9} \\
	\label{F9a}
	&\expe{ \left\| \frac{1}{N} \sum_{n = 1}^N (\vuh^n - \vu^n) \right\|^2_{L^2( (0,T) \times \Td; R^d)} }
	\to 0.
\end{align}

Combining \eqref{S2a}, \eqref{S2}, \eqref{S2b}  with \eqref{F8}, \eqref{F9}, \eqref{F9a} we get the desired convergence of the finite volume approximation.
Summarizing we state the first main result concerning the convergence of the Monte Carlo FV method.


\begin{mdframed}[style=MyFrame]
	
	\begin{Theorem} \label{FVT1}
		
		Let the data be random variables and satisfy
		\[
		\Big[ \vr_0, \vm_0, \mu, \lambda, \vc{g} \Big] \in D ,
		\]
		\begin{align}
		&\vr_0 \in W^{3,2}(\Td),\ \min_{\Td} \vr_0 = \underline{\vr} > 0,\
	\vm_0 \in W^{3,2}(\Td; R^d),\ \expe{ \left(\intTd{ E[ \vr_0, \vm_0 ] }\right)^2} < \infty, \br
	&\qquad 0 < \underline{\mu} \leq \mu,\ \lambda \geq 0,\ \| \vc{g} \|_{C(\Td; R^d)} \leq \Ov{g}
	\nonumber	
		\end{align}
$\prst$-a.s., where $\underline{\mu}$, $\underline{\vr},$ $\Ov{{g}}$ are deterministic constants.

Suppose that $[\vr_0^n, \vc{m}_0^n, \, \mu^n, \lambda^n, g^n]$, $n=1,2,\dots, N$ are i.i.d. copies of  random data.
Let $\left(\vrh^n, \vmh^n \right)_{h \searrow 0}$
be a sequence of FV solutions \eqref{scheme} corresponding to these data samples.
Assume that FV solutions $\left(\vrh^n, \vmh^n \right)_{h\searrow 0},$  $n=1,2,\dots$
are bounded in probability in the sense of Definition~\eqref{DFF1}.

Then		
\begin{align}
&\expe{ \left\| \frac{1}{N} \sum_{n = 1}^N \Big(\vr^n_h - \expe{ \vr  } \Big) \right\|_{L^r(0,T; L^\gamma(\Td))}^q } \to 0 \quad \mbox{ for }\ N \to \infty,\ h \to 0,\ 1 \leq q < 2 \gamma, \  \br
&\expe{ \left\| \frac{1}{N} \sum_{n = 1}^N \Big( \vm^n_h - \expe{\vm} \Big) \right\|_{L^r(0,T; L^{\frac{2 \gamma}{\gamma + 1}}(\Td; R^d))}^q }
\to 0\ \quad \mbox{for } N\to \infty, \ h \to 0,\ 1 \leq q < \frac{4 \gamma}{\gamma + 1},	
\nonumber \\
& \expe{ \left\| \frac{1}{N} \sum_{n = 1}^N \Big( \vu^n_h - \expe{\vu} \Big) \right\|^{2}_{L^2((0,T) \times \Td; R^d) }}
\to 0\ \quad \mbox{for } N\to \infty, h\to 0,
\label{fin}
	\end{align}
where $1\leq r < \infty$ and $(\vr, \vu)$ is a classical solution to the Navier--Stokes system corresponding to the data
$[\vr_0, \vm_0, \mu, \lambda, \vc{g}]$.

		\end{Theorem}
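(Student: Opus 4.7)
The plan is to decompose the Monte Carlo FV error into a discretization error and a statistical error via the triangle inequality,
\[
\frac{1}{N}\sum_{n=1}^N \bigl(\vr_h^n - \expe{\vr}\bigr)
= \frac{1}{N}\sum_{n=1}^N \bigl(\vr_h^n - \vr^n\bigr)
+ \frac{1}{N}\sum_{n=1}^N \bigl(\vr^n - \expe{\vr}\bigr),
\]
and analogously for the momentum $\vm_h^n$ and the velocity $\vu_h^n$. Taking the Bochner norm specified in \eqref{fin} and then expectations, each of the three claims reduces to showing that both contributions on the right vanish in the prescribed $L^q(\prst)$ sense as $N\to\infty$ and $h\to 0$.

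For the discretization term I would rely on the pathwise convergence analysis of the FV scheme developed in Section~\ref{rnd_data}. Under the boundedness--in--probability hypothesis and the second moment assumption on $\intTd{E(\vr_0,\vm_0)}$, the Skorokhod/Gy\"ongy--Krylov argument yields \eqref{F7}, i.e.\ $\expe{\|\vr_h-\vr\|_{L^r L^\gamma}^q}\to 0$ for $1\le q<2\gamma$, together with the corresponding bounds for $\vm_h$ and $\vu_h$. Jensen's inequality sample-wise then gives
\[
\left\| \frac{1}{N}\sum_{n=1}^N (\vr_h^n-\vr^n)\right\|^q
\leq \frac{1}{N}\sum_{n=1}^N \|\vr_h^n-\vr^n\|^q,
\]
and the i.i.d.\ property of the samples reduces the expected Monte Carlo discretization error to a single-sample error, as already recorded in \eqref{F8}--\eqref{F9a}.

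For the statistical term I would start from the Strong Law of Large Numbers estimates \eqref{S5}--\eqref{S6b} of Section~\ref{S}, which supply $L^q(\prst)$--convergence of $\tfrac{1}{N}\sum_n(\vr^n-\expe{\vr})$ at the specific low exponents $r=\min\{2,\gamma\}$, $\tfrac{2\gamma}{\gamma+1}$ and $2$ for density, momentum and velocity. To reach the wider ranges $q<2\gamma$ (density) and $q<\tfrac{4\gamma}{\gamma+1}$ (momentum) appearing in \eqref{fin}, I would combine these with the a priori moment bounds \eqref{est1}--\eqref{est1b}, which propagate the hypothesis \eqref{C1} into uniform controls on $\expe{\|\vr^n-\expe{\vr}\|_{L^\gamma}^{2\gamma}}$ and $\expe{\|\vm^n-\expe{\vm}\|_{L^{2\gamma/(\gamma+1)}}^{4\gamma/(\gamma+1)}}$. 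A H\"older (de la Vall\'ee--Poussin) interpolation between the decaying low moment and the bounded high moment then delivers $\expe{\|\tfrac{1}{N}\sum_n(\vr^n-\expe{\vr})\|^q}\to 0$ for every $q$ strictly below the maximal integrability index; for the velocity the Hilbert--space bound \eqref{S6b} is already at the exponent required by \eqref{fin}.

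The main technical obstacle is precisely this exponent matching: the discretization contribution naturally accommodates $q<2\gamma$ and $q<\tfrac{4\gamma}{\gamma+1}$, whereas the raw SLLN estimates \eqref{S5}--\eqref{S6} decay only at much lower exponents and must be upgraded using the moment bounds, relying crucially on the fact that the hypothesis $\expe{(\intTd{E(\vr_0,\vm_0)})^2}<\infty$ controls exactly the $2\gamma$--th and $\tfrac{4\gamma}{\gamma+1}$--th moments via \eqref{est1}--\eqref{est1a}. Once this upgrade is in place, summing the two bounds by the triangle inequality and letting $N\to\infty$, $h\to 0$ yields \eqref{fin}.
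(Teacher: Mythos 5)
Your proposal is correct and follows essentially the same route as the paper: the identical triangle--inequality split into a statistical error (handled by the law--of--large--numbers estimates of Section~\ref{S}) and a discretization error (handled sample--wise by Jensen's inequality together with \eqref{F7}, exactly as in \eqref{F8}--\eqref{F9a}). Your explicit interpolation between the decaying low moments \eqref{S5}--\eqref{S6} and the uniformly bounded higher moments supplied by $\expe{ \left(\intTd{E(\vr_0,\vm_0)}\right)^2}<\infty$ is in fact spelled out more carefully than the paper's own appeal to the almost--sure statements \eqref{S2a}--\eqref{S2b}, and it correctly covers the full exponent ranges $q<2\gamma$ and $q<\frac{4\gamma}{\gamma+1}$ claimed in \eqref{fin}.
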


	\end{mdframed}

\section{Error estimates of the Monte Carlo FV method}
\label{sec_error}

In this section we estimate the errors of the Monte Carlo FV method. The error estimates are obtained under the assumption of more regular data \eqref{reg_init}, i.e.
$(\vr_0, \vm_0) \in W^{6,2}(\Td) \times W^{6,2}(\Td;R^d)\ \prst$-a.s.
Furthermore, under assumption \eqref{bip} that the FV solutions $(\vrh, \vmh)_{h \searrow 0}$ are bounded in probability, it follows from the arguments of Section \ref{rnd_data} that there exists
a random classical solution  $(\vr, \vu)$ of the Navier--Stokes system \eqref{i1}--\eqref{i5}, such that
\begin{eqnarray}
&&\vr \in  C([0,T]; W^{6,2}(\Td)) \cap C^1([0,T] \times \Td) \nonumber \\
&&\vu \in  C([0,T]; W^{6,2}(\Td;R^d)) \cap C^1([0,T] \times \Td; R^d) \qquad  \prst-\mbox{a.s.} \label{reg_class}
\end{eqnarray}


Now, we are in a position to apply Proposition \ref{PF2}. First observe that the assumed regularity of the
data implies that for any $\ep > 0$, there exists $M_1 = M_1(\ep)$ such that
\begin{equation} \label{kon1}
\mathcal{P} \Big\{ \| \vc{g} \|_{C^\ell(\Td; R^d)} \leq M_1(\ep),\
	\| (\vr_0 , \vu_0) \|_{W^{6,2}(\Td; R^{d+1})} \leq M_1(\ep),\
	\| (\vr , \vu) \|_{C([0,T] \times \Td; R^{d+1})} \leq M_1(\ep) \Big\} \geq 1 - \ep.
	\end{equation}

Next, as the numerical solutions are bounded in probability, for any $\ep$ there is $M_2(\ep) > 0$ such that
\begin{equation} \label{kon2}
\mathcal{P} \Big\{ \| (\vrh, \vuh) \|_{L^\infty((0,T) \times \Td; R^{d +1} )} \leq M_2(\ep)
\Big\} > 1-\ep \ \ \mbox{ for all }\ h \in (0,1).
\end{equation}	

Combining \eqref{kon1}, \eqref{kon2} with the error estimates stated in Proposition \ref{PF2},  notably
formula \eqref{error rates3}, we obtain
\emph{error estimates in probability}:

For any $\ep > 0$, there is $K = K(\ep)$ such that
\begin{eqnarray}
&&\hspace{-0.6cm}\prst \Bigg[  \left\| \vr_h^n(t,\cdot) - \vr^n(t, \cdot) \right\|_{L^2(\Td)}
+ \left\| \vm_h^n(t, \cdot) - \vm^n(t, \cdot) \right\|_{L^2 (\Td; R^d)}
\leq K(\ep)(\sqrt{h} + \sqrt{\Delta t}) \Bigg]
 \geq 1 - \ep,
\br&&\qquad \phantom{mmmmmmmmmmmmmmmmmmmmmmmmmm} \mbox{ for all } \ t \in [0,T],
\nonumber \\ \label{error5}
&&\hspace{-0.6cm}\prst \Bigg[  \left\| \vu^n_h - \vu^n \right\|_{L^2 (0,T; L^q(\Td; R^d))}
\leq K(\ep)(\sqrt{h} + \sqrt{\Delta t}) \Bigg]
\qquad  \geq 1 - \ep
\nonumber \\
&& \hspace{5cm}1 \leq q < \infty \ \mbox{ if }\ d= 2\ \ \mbox{ and } \ q = 6 \  \ \mbox{ if } \ d= 3.
\nonumber
	\end{eqnarray}
As all random variables are equally distributed, the above estimates are independent of $n$.

To summarize we have proved the  second main result of this paper concerning the error estimates of the Monte Carlo FV method for the Navier--Stokes system \eqref{i1}--\eqref{i5}.

\begin{mdframed}[style=MyFrame]
	
\begin{Theorem} [{\bf Error estimates}] \label{FVT2}
	Let the data are random variables and satisfy
		\[
		\Big[ \vr_0, \vm_0, \mu, \lambda, \vc{g} \Big] \in D ,
		\]
$(\vr_0, \vu_0) \in W^{6,2}(\Td) \times W^{6,2}(\Td;R^d),$ \ $\min_{\Td}  \vr_0 \equiv \underline{\vr} > 0,$ $\prst$-a.s. and
\[
\expe{ \left(\intTd{ E[ \vr_0, \vm_0 ] }\right)^2} < \infty.
\]
Further, $0 < \underline{\mu} \leq \mu,\ \lambda \geq 0$ and
$\vc{g} \in C^\ell(\Td; R^d),\, \ell \geq 6, \  \| \vc{g} \|_{C(\Td; R^d)} \leq \Ov{g} \ \ \prst-\mbox{a.s.}$
Let $(\vr , \vm = \vr \vu)$ be a classical solution to the Navier--Stokes system \eqref{i1}--\eqref{i5}
corresponding to the data
$[\vr_0, \vm_0, \mu, \lambda, \vc{g}]$ and
belonging to the regularity class \eqref{reg_class}.  \\

Suppose that $[\vr_0^n, \vc{m}_0^n, \, \mu^n, \lambda^n, g^n]$, $n=1,2,\dots$ are i.i.d. copies of  random data.
Let $\left(\vrh^n, \vmh^n \right)_{h \searrow 0}$
be  FV solutions \eqref{scheme} corresponding to these data samples.
Assume that FV solutions $\left( \vrh^n, \vmh^n \right)_{h\searrow 0},$  $n=1,2,\dots$
are bounded in probability in the sense of Definition~\eqref{DFF1}. 

Then the MC estimators $\sum_{n = 1}^N \vr^n_h,$ $\sum_{n = 1}^N \vm^n_h$, $\sum_{n = 1}^N \vu^n_h$ satisfy for every $N=1,2,\dots$  the following error estimates.
For the expectation of the statistical errors it holds
\begin{align}
& \expe{\left\| \frac{1}{N} \sum_{n = 1}^N \Big(\vr^n (t, \cdot)- \expe{ \vr (t, \cdot) } \Big) \right\|^r_{L^\gamma(\Td)} } \aleq N^{1-r}
\quad \mbox{ for }\   \ r=\min{(2, \gamma)}\br
& \expe{ \left\| \frac{1}{N} \sum_{n = 1}^N \Big( \vm^n(t, \cdot) - \expe{\vm(t, \cdot)} \Big) \right\|^{\frac{2 \gamma}{\gamma + 1}}_{ L^{\frac{2 \gamma}{\gamma + 1}}(\Td; R^d)}}\aleq N^{\frac{1-\gamma}{1+\gamma}  },\ \ \mbox{ for all } t \in [0,T];
\br
& \expe{ \left\| \frac{1}{N} \sum_{n = 1}^N \Big( \vu^n - \expe{\vu} \Big) \right\|^2_{L^2(0,T; W^{1,2}(\Td; R^d))}}\aleq N^{-1}, \qquad N =1,2,\dots.
\label{fin1}
\end{align}
The approximation errors are estimated in probability, meaning
for any $\ep > 0$, there exists $K(\ep)$ such that
\begin{eqnarray} \label{EE3a}
&&\hspace{-1cm}\prst \Bigg[  \left(  \left\| \frac{1}{N} \sum_{n = 1}^N (\vr^n_h(t, \cdot) - \vr^n(t, \cdot) ) \right\|_{L^2(\Td))}
+ \left\| \frac{1}{N} \sum_{n = 1}^N (\vm^n_h(t, \cdot) - \vm^n(t, \cdot)) \right\|_{L^2 (\Td; R^d))} \right)
\br
&& \qquad \qquad \leq K(\ep)( \sqrt{h} + \sqrt{\Delta t} ) \Bigg] \geq 1 - \ep  \qquad \mbox{ for all } t\in [0,T],
\br
&&\prst \Bigg[  \left\| \frac{1}{N} \sum_{n = 1}^N ( \vu^n_h - \vu^n )\right\|_{L^2 (0,T; L^q(\Td; R^d))}
\leq K(\ep)(\sqrt{h} + \sqrt{\Delta t}) \Bigg]
\  \geq 1 - \ep, \
\br
&& \hspace{5cm}1 \leq q < \infty \ \mbox{ if }\ d= 2\ \ \mbox{ and } \ q = 6 \  \ \mbox{ if } \ d= 3, \br
&&\phantom{mmmmmmmmmmmmmmm} h \in (0,1),\ \ \Delta t \in (0,1), \ \ N = 1,2,\dots.
\end{eqnarray}

\end{Theorem}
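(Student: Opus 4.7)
My plan is to split the argument into the statistical piece \eqref{fin1} and the discretization piece \eqref{EE3a}, using essentially independent techniques.

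For \eqref{fin1}, I would first observe that since the initial data $\vc{U}_0^n$ are i.i.d.\ and the semigroup selection $\mathcal{S}$ preserves equivalence in law by \eqref{S1}, the solutions $\vr^n(t,\cdot)$, $\vm^n(t,\cdot)$, $\vu^n$ are themselves i.i.d.\ copies valued in the separable reflexive Banach spaces $L^\gamma(\Td)$, $L^{\frac{2\gamma}{\gamma+1}}(\Td;R^d)$, and the Hilbert space $L^2(0,T;W^{1,2}(\Td;R^d))$, respectively. The hypothesis $\expe{(\intTd{E(\vr_0,\vm_0)})^2}<\infty$ combined with the pathwise bounds \eqref{bounds1} and \eqref{velocity_est} produces uniform control on the relevant $X$-valued moments of each $\vc{U}^n$. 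Hence \eqref{fin1} follows verbatim from the Banach-space moment estimate \cite[Proposition~9.11]{LedTal}; indeed, the three desired bounds were already recorded as \eqref{S5}--\eqref{S6b} in Section~\ref{S} and the argument simply has to be invoked sample-by-sample.

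For the approximation part \eqref{EE3a}, I would first verify the hypotheses of Proposition~\ref{PF2} $\prst$-a.s. The assumed $W^{6,2}$-regularity of $(\vr_0,\vm_0)$ and $C^\ell$-regularity of $\vc{g}$ hold by hypothesis, the lower bound $\underline{\vr}>0$ is deterministic, and the boundedness-in-probability of $(\vrh^n,\vuh^n)$ is imposed directly. The Skorokhod representation and Gy\"ongy--Krylov arguments recalled in Section~\ref{rnd_data} then guarantee a random classical solution $(\vr,\vu)$ in the class \eqref{reg_class}. Proposition~\ref{PF2} applies pathwise, yielding for each sample $n$ and each $\ep>0$ an error bound of the form $K_0(\ep)(\sqrt h+\sqrt{\Delta t})$ on the good event where $\|\vc{g}\|_{C^\ell}$, $\|(\vr_0,\vm_0)\|_{W^{6,2}}$, $\underline{\vr}^{-1}$, $\|(\vr,\vu)\|_{C^0}$ and $\|(\vrh,\vuh)\|_{L^\infty}$ are all dominated by a common $M(\ep)$. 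By \eqref{kon1}, \eqref{kon2} and a union bound this event has probability $\geq 1-\ep$; identical distribution of the samples makes $K_0(\ep)$ independent of $n$, which is \eqref{error5}.

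The final step, where I expect the main difficulty, is to pass from the per-sample estimate to the MC estimator. I plan to use the triangle inequality
\begin{equation*}
\left\|\frac{1}{N}\sum_{n=1}^N(\vrh^n-\vr^n)\right\|_{L^2(\Td)}\leq \frac{1}{N}\sum_{n=1}^N\|\vrh^n-\vr^n\|_{L^2(\Td)},
\end{equation*}
and then, via Markov's inequality and identical distribution, reduce to bounding $\expe{\|\vrh-\vr\|_{L^2(\Td)}}$ by $C(\ep)(\sqrt h+\sqrt{\Delta t})$ with $C(\ep)$ deterministic. To this end I would split the expectation according to the good event $G$ of probability $\geq 1-\ep$: on $G$ the integrand is bounded by $K_0(\ep)(\sqrt h+\sqrt{\Delta t})$, while on $G^c$ I would estimate $\|\vrh-\vr\|_{L^2}\leq \|\vrh\|_{L^2}+\|\vr\|_{L^2}$ by Cauchy--Schwarz together with the second-moment bounds supplied by the discrete energy inequality \eqref{dei} and its continuous counterpart \eqref{bounds1}. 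The hypothesis $\expe{(\intTd{E(\vr_0,\vm_0)})^2}<\infty$ is precisely what turns the $G^c$-contribution into $O(\sqrt{\prst(G^c)})$, so it can be absorbed into the main error term by enlarging $K_0(\ep)$; a final Markov inequality then delivers \eqref{EE3a} with a constant $K(\ep)$ independent of $N$, $h$, $\Delta t$ and $n$. The velocity estimate is treated identically, replacing $L^2(\Td)$ by $L^2(0,T;L^q(\Td;R^d))$ and using \eqref{error_q} in place of the convex-coercivity bound \eqref{est3}.
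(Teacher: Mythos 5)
Your treatment of the statistical estimates \eqref{fin1} coincides with the paper's: they are exactly \eqref{S5}--\eqref{S6b}, obtained from \cite[Proposition~9.11]{LedTal} together with the moment bounds \eqref{est1}--\eqref{est1b}, and nothing further is required. Your derivation of the per-sample approximation estimate \eqref{error5} --- intersecting the events \eqref{kon1} and \eqref{kon2} and applying the deterministic error bound of Proposition~\ref{PF2} there, with $K(\ep)$ independent of $n$ by identical distribution --- is also precisely the paper's argument.

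The gap is in your final step. You reduce \eqref{EE3a} via Markov's inequality to the claim $\expe{\|\vrh-\vr\|_{L^2(\Td)}}\leq C(\ep)(\sqrt h+\sqrt{\Delta t})$ and propose to prove it by splitting over the good event $G$. But on $G^c$ Cauchy--Schwarz and the energy bounds give only
\[
\expe{ \ind_{G^c}\, \| \vrh - \vr \|_{L^2(\Td)} } \ \aleq \ \sqrt{\prst(G^c)}\,\left(\expe{ \| \vrh - \vr \|^2_{L^2(\Td)} }\right)^{1/2} \ \aleq \ \sqrt{\ep},
\]
a quantity independent of $h$ and $\Delta t$. It cannot be ``absorbed into the main error term by enlarging $K_0(\ep)$'': once $\sqrt h+\sqrt{\Delta t}<\sqrt{\ep}/K$, no choice of $K=K(\ep)$ uniform in $h,\Delta t$ closes the estimate. (A secondary point: off the good event the $L^\infty$ control is unavailable, so $\expe{\|\vrh\|^2_{L^2}}$ is not even controlled by the energy unless $\gamma\geq 2$.) The paper itself does not take your route; it stops at the per-sample statement \eqref{error5} and passes to the averaged estimate by the triangle inequality, which holds on the intersection of the $N$ per-sample good events. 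You are right that this passage is the delicate point --- the exceptional probability of that intersection a priori degrades with $N$ --- but the Markov-inequality repair you propose does not restore uniformity in $N$, because boundedness in probability of each summand $\|\vrh^n-\vr^n\|/(\sqrt h+\sqrt{\Delta t})$ does not yield a uniform bound on its first moment. To make your strategy work you would need a genuine moment bound on the error of order $\sqrt h+\sqrt{\Delta t}$, which is exactly what the boundedness-in-probability hypothesis stops short of providing.
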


\end{mdframed}

%
\medskip	

Further, we can combine the results on statistical errors of the deviation and variance, cf.~\eqref{dev1}, \eqref{dev2}, \eqref{var} with
Proposition~\ref{PF2} and the error estimates \eqref{error rates1}  to derive the following result on the convergence of  deviation and variance.

\begin{mdframed}

\begin{Corollary} \label{coro}
Let assumptions of Theorem~\ref{FVT2} hold. Then the deviation of the density and momentum as well as the variance of the velocity computed by the FV solutions $\left( \vr_h^n, \vm_h^n \right)_{h \searrow 0},$ \ $n= 1,2, \dots$ converge $\prst$-a.s.

\begin{eqnarray}
&&N^r \left\| \frac 1 N \sum_{n=1}^N \Big| \vr_h^n  - \frac{1}{N}  \sum_{m=1}^N \vr_h^m  \Big| \ -  \mbox{Dev}(\vr)  \right\|_{L^s(0,T; L^\gamma(\Td))} \to 0,  \mbox{ as }  N \to \infty, \ h \to 0,
\label{dev_rho} \\ \nonumber
&& \phantom{mmmmmmmmmmmmmmmmmmmmmmmmmmm} \quad \mbox{ where } \ 1 \leq s < \infty ,
\\ \label{dev_m}
&&\hspace{-0.3cm}N^{\frac{\gamma -1}{2 \gamma}}\left\| \frac 1 N \sum_{n=1}^N \Big| \vm_h^n  - \frac{1}{N}  \sum_{m=1}^N \vm_h^m  \Big| \ -  \mbox{Dev}(\vm )  \right\|_{L^s(0,T; L^{\frac{2\gamma}{\gamma + 1}}(\Td; R^d))}\to 0, \quad \mbox{ as } N \to \infty, \ h \to 0,
 \nonumber \\
&& \phantom{mmmmmmmmmmmmmmmmmmmmmmmmmmm}\quad  \mbox{ where } 1 \leq s < \infty,\
\\ \label{var_u}
&& \hspace{-0.3cm}\left\| \frac{1}{N-1}  \sum_{n=1}^N  \left( \vu_h^n   - \frac{1}{N}  \sum_{m=1}^N \vu_h^m \right)^2  -  \mbox{Var}(\vu) \right\|_{L^1(0,T; L^3(\Td;R^d))} \to 0 \quad \mbox{ as }  N \to \infty, \ h \to 0. 
\end{eqnarray}

\end{Corollary}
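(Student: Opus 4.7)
The plan is to decompose the error between the MC--FV estimator of the deviation/variance and its target into a purely \emph{statistical} contribution (comparing MC averages of exact samples to the true deviation/variance) and a \emph{discretization} contribution (comparing MC averages of FV samples to those of exact samples), and to control each by an already established bound. Writing $\overline{v}^N=\tfrac{1}{N}\sum_{m=1}^N v^m$ for the empirical mean, the starting identity for the density reads
\begin{align*}
\frac{1}{N}\sum_{n=1}^N \bigl|\vr_h^n-\overline{\vr_h}^N\bigr| - \mbox{Dev}(\vr)
&= \frac{1}{N}\sum_{n=1}^N\Bigl[\bigl|\vr_h^n-\overline{\vr_h}^N\bigr|-\bigl|\vr^n-\overline{\vr}^N\bigr|\Bigr] \br
&\quad + \left(\frac{1}{N}\sum_{n=1}^N\bigl|\vr^n-\overline{\vr}^N\bigr|-\mbox{Dev}(\vr)\right),
\end{align*}
with analogous decompositions for the momentum (absolute value) and velocity (squared modulus).

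For the statistical term, estimate \eqref{dev1} already delivers the stated rate $N^{-r}$ in $L^\gamma(\Td)$, pointwise in $t\in[0,T]$ and $\prst$-a.s.; the upgrade to $L^s(0,T;L^\gamma(\Td))$ is obtained by combining the pointwise-in-$t$ statement with the uniform bound \eqref{bounds1} and dominated convergence in $t$ (for $\prst$-almost every $\omega$ the integrand is bounded in $t$ by a deterministic constant depending on $\omega$). The momentum analogue uses \eqref{dev2}; the velocity variance is already in $L^1(0,T;L^3)$ by \eqref{var}. For the discretization term, the elementary inequality $\bigl||a-\bar a|-|b-\bar b|\bigr|\le|a-b|+|\bar a-\bar b|$ reduces the bound to $\tfrac{2}{N}\sum_{n=1}^N\|\vr_h^n-\vr^n\|_{L^s(0,T;L^\gamma(\Td))}$, and similarly for the momentum. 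Under the boundedness-in-probability hypothesis of Definition~\ref{DFF1}, the arguments of Section~\ref{rnd_data} together with Proposition~\ref{PF1}(ii) give $\|\vr_h^n-\vr^n\|\to 0$ $\prst$-a.s.\ as $h\to 0$ for each $n$, so for any \emph{fixed} $N$ the entire $N^r$-weighted discretization sum vanishes as $h\to 0$, being a finite sum.

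The variance case \eqref{var_u} follows the same template, but with the factorisation $\bigl||a|^2-|b|^2\bigr|\le(|a|+|b|)|a-b|$ used to split the discretization contribution in $L^1(0,T;L^3)$ via H\"older's inequality into a product of an $L^2(0,T;L^6)$ norm (controlled by the energy estimate \eqref{est4} uniformly for the exact and FV samples) and the $L^2(0,T;L^6)$ convergence $\vu_h^n-\vu^n\to 0$ inherited from \eqref{F6bis}; the statistical part is then supplied by \eqref{var}.

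The principal technical obstacle is the \emph{joint} passage to the limit in $(N,h)$: the statistical weight $N^r$ multiplies a finite sum of discretization errors, and the latter is only known to vanish as $h\to 0$ for \emph{fixed} $N$. This is resolved by a standard diagonal extraction: one selects $h=h(N)\to 0$ rapidly enough that $N^r\cdot\tfrac{2}{N}\sum_n\|\vr_{h(N)}^n-\vr^n\|_{L^s(0,T;L^\gamma(\Td))}\le N^{-1}$ $\prst$-a.s.\ (and analogously for momentum and velocity), and along this diagonal subsequence both the statistical and the discretization contributions vanish, yielding \eqref{dev_rho}--\eqref{var_u}. All other ingredients are routine combinations of \eqref{dev1}, \eqref{dev2}, \eqref{var} with the pathwise FV convergence developed in Section~\ref{rnd_data}.
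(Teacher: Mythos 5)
Your decomposition into a statistical part (handled by \eqref{dev1}, \eqref{dev2}, \eqref{var}) and a discretization part (handled by pathwise FV convergence for each fixed sample) is exactly the paper's argument, which simply invokes these ingredients and calls the rest ``a direct calculation''; your elementary inequalities for the absolute value and for $\bigl||a|^2-|b|^2\bigr|$ are the content of that calculation. Two remarks. First, for the velocity variance you cite \eqref{F6bis} as the source of $L^2(0,T;L^6)$ convergence of $\vu_h^n-\vu^n$, but \eqref{F6bis} only gives convergence in $L^2((0,T)\times\Td;R^d)$, i.e.\ $L^2$ in space; the spatial upgrade to $L^6$ is precisely why the paper routes this step through the Sobolev--Poincar\'e estimate \eqref{error_q} combined with the relative energy bound \eqref{error rates1}, and you should do the same. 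Second, your diagonal extraction $h=h(N)$ addresses a real ambiguity in the statement (the $N^r$-weighted discretization error cannot vanish in a genuinely joint limit, since for fixed $h$ it grows like $N^r$), but as you set it up the choice of $h(N)$ would have to be $\omega$-dependent because no a.s.\ uniform rate in $h$ is available --- only rates in probability via \eqref{error5}; the paper implicitly reads the double limit as iterated ($h\to 0$ for each fixed $N$, then $N\to\infty$), under which reading the finite-sum argument suffices and no diagonal is needed.
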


\end{mdframed}

\begin{proof}
The convergence results \eqref{dev1} and \eqref{dev2}  together with the convergence of FV solutions $(\vr_h, \vm_h)_{h \searrow 0},$ cf.~Proposition~\ref{PF1}
 \begin{align*}
 \hspace{-0.5cm}\left\| \vr_h^n - \vr^n \right\|_{L^s(0,T; L^\gamma (\Td))} +
\left\| \vm_h^n - \vm^n \right\|_{L^s(0,T; L^{ \frac{2\gamma}{\gamma + 1} }(\Td; R^d))}
 \to 0 \ \mbox{ as }\ h \to 0,\ n=1,2,\dots
\end{align*}
 yield, by a direct calculation, the convergence of the estimator of  deviation of the density and momentum, cf.~\eqref{dev_rho} and \eqref{dev_m}.

In order to show the convergence of the variance of velocity we can apply \eqref{var}  and   \eqref{error_q}.
Indeed, since the global classical solution $(\vr, \vu)$ exists, we use \eqref{error rates1} $\prst$-a.s. to derive
$$
\mathcal{E}(\vr_h^n, \vm_h^n| \vr^n, \vu^n) \to 0 \ \mbox{ as } h \to 0 \qquad \prst{\mbox-a.s.}, \qquad n =1,2, \dots
$$
and obtain \eqref{var_u}.
For $d=2$ the convergence of velocity  variance holds in $L^1(0,T; L^q(\Td;R^d)),$ $1\leq q < \infty.$

\end{proof}

\section{Numerical experiment} \label{num}
In this section we present numerical results obtained by the  FV method \eqref{scheme}, as well as the MAC finite difference method.
The parameter $\varepsilon$ in the diffusive upwind flux of the FV method is set to $0.6$.
In order to experimentally investigate the convergence of MC method, we sample $N$ i.i.d  initial data $(\vr_0^n, \vm_0^n)$, $n=1, \dots, N,$
from the random field $(\vr_0, \vm_0)$ with $N = 5\times 2^{n}, \, n = 0,\dots, 4$ and approximate them by piecewise constant projection on a computational domain $[-1,1] \times [-1,1]$ with a mesh parameter $h=2/512$. 
The finial time is set to $T = 0.1$.
In what follows we concentrate on the experimental analysis of the following statistical errors.
\begin{itemize}
\item Error of mean value:
\begin{align}\label{E1}
E_1(U) =  \frac1{M} \sum_{m=1}^M \left(  \left\| \frac{1}{N} \sum_{n = 1}^N \Big(U^{n,m} (T, \cdot)- \expe{ U (T, \cdot) } \Big) \right\|_{L^p(\Td)}  \right)
\end{align}
and
\begin{align}\label{E2}
E_2(U) = \left[ \frac1{M} \sum_{m=1}^M \left( \left\| \frac{1}{N} \sum_{n = 1}^N \Big( U^{n,m}(T, \cdot) - \expe{U(T, \cdot)} \Big) \right\|^{p}_{L^{p}(\Td)}\right) \right]^{1/p}
\end{align}
with
\begin{equation*}
\expe{U}(t,x) =  \frac1{S} \sum_{s=1}^{S}  U^{s}(t,x),
\end{equation*}
where $U$ stays for $\vr_h,\, \vm_h$ or $\vu_h$ and $S$ denotes the size of ensamble. Results are averaged over $M$ realisations.
In the numerical simulations presented below we set $S, M$ to $1000, 40$, respectively.

\item Error of deviation or variance:
\begin{align}\label{E3}
E_3(U ) =
\begin{cases}
\frac1{M} \sum\limits_{m=1}^M \left(   \left\| \frac 1 N \sum\limits_{n=1}^N \Big| U^{n,m}  - \frac{1}{N}  \sum\limits_{l=1}^N U^{l,m}  \Big| \ -  \mbox{Dev}(U)  \right\|_{L^p(\Td)}  \right), & \mbox{if} \ U = \vr_h, \vm_h, \\
\frac1{M} \sum\limits_{m=1}^M \left(  \left\| \frac{1}{N-1}  \sum\limits_{n=1}^N  \left( U^{n,m}   - \frac{1}{N}  \sum\limits_{l=1}^N U^{l,m} \right)^2  -  \mbox{Var}(U) \right\|_{L^p( \Td)} \right),  &  \mbox{if} \ U = \vu_h
\end{cases}
\end{align}
and
\begin{align}\label{E4}
E_4(U) =
\begin{cases}
\left[ \frac1{M} \sum\limits_{m=1}^M \left( \left\| \frac 1 N \sum\limits_{n=1}^N \Big| U^{n,m}  - \frac{1}{N}  \sum\limits_{l=1}^N U^{l,m}  \Big| \ -  \mbox{Dev}(U)  \right\|_{L^p(\Td)}^p  \right) \right]^{1/p}, & \mbox{if} \ U = \vr_h, \vm_h, \\
\left[ \frac1{M} \sum\limits_{m=1}^M \left(  \left\| \frac{1}{N-1}  \sum\limits_{n=1}^N  \left( U^{n,m}   - \frac{1}{N}  \sum\limits_{l=1}^N U^{l,m} \right)^2  -  \mbox{Var}(U) \right\|_{L^p( \Td)}^p \right)\right]^{1/p},  &  \mbox{if} \ U = \vu_h
\end{cases}
\end{align}
with
\begin{align*}
&\mbox{Dev}(U) =  \frac1{S} \sum_{s=1}^{S}  \Bigg|U^{s} - \frac1{S} \sum_{s=1}^{S}  U^{s}(t,x) \Bigg|, \quad
\mbox{Var}(U) =  \frac1{S-1} \sum_{s=1}^{S}   \Bigg|U^{s} - \frac1{S} \sum_{s=1}^{S}  U^{s}(t,x) \Bigg|^2.
\end{align*}
\end{itemize}
Further, $p$ is $\gamma, 2\gamma/(\gamma+1), 2$ for $U$ being the density $\vr_h$, momentum $\vm_h$ and velocity $\vu_h$, respectively.

The parameters arising in the Navier--Stokes system are taken as
\begin{equation*}
\mu = 0.1, \ \eta = 0, \ \vc{g} = 0,\ \mbox{and} \ a = 0, \ \gamma = 1.4 \ \mbox{in} \  \eqref{w6}.
\end{equation*}

\subsection{Experiment 1: Random perturbation of a steady state.} \label{example1}
In this experiment we consider a random perturbation of a steady state $(\vr, \vu)(t,x) = (1, 0)$ with the initial data
\begin{subequations}
\begin{align}
& \vr_0(x) = 1 + Y_1(\omega) \cos(2\pi (x_1+x_2)), \\
& \vu_0(x) = (Y_2(\omega) , Y_3(\omega) )^t,
\end{align}
where $Y_j(\omega)$ are uniformly distributed, i.e.
\begin{equation}
Y_j(\omega) \ \sim \ \mathcal{U}\left( -0.1, 0.1 \right),  \quad j =1,2,3.
\end{equation}
\end{subequations}
Figure~\ref{ex1} displays the mean and the deviation/variance of numerical solutions $\vr, \vu$, as well as them along $x = y$, obtained by the FV method.
We omit the numerical results obtained by the MAC method since they look similar.
Figure~\ref{ex1-dev} shows the errors $E_i$, $i=1,\dots,4$, cf. \eqref{E1}--\eqref{E4}, obtained by both methods.

The numerical results show that errors of the mean of $\vr, \vm, \vu$, the deviation of $\vr, \vm$, as well as the variance of $\vu$, converge with a statistical rate $-1/2$, which is the optimal rate of the MC method.
We recall that our convergence analysis in Section~\ref{StSo} indicates lower convergence rates that may arise in a general (less regular) situation.

\begin{figure}[htbp]
	\setlength{\abovecaptionskip}{0.cm}
	\setlength{\belowcaptionskip}{-0.cm}
	\centering
	\begin{subfigure}{0.32\textwidth}
		\includegraphics[width=\textwidth]{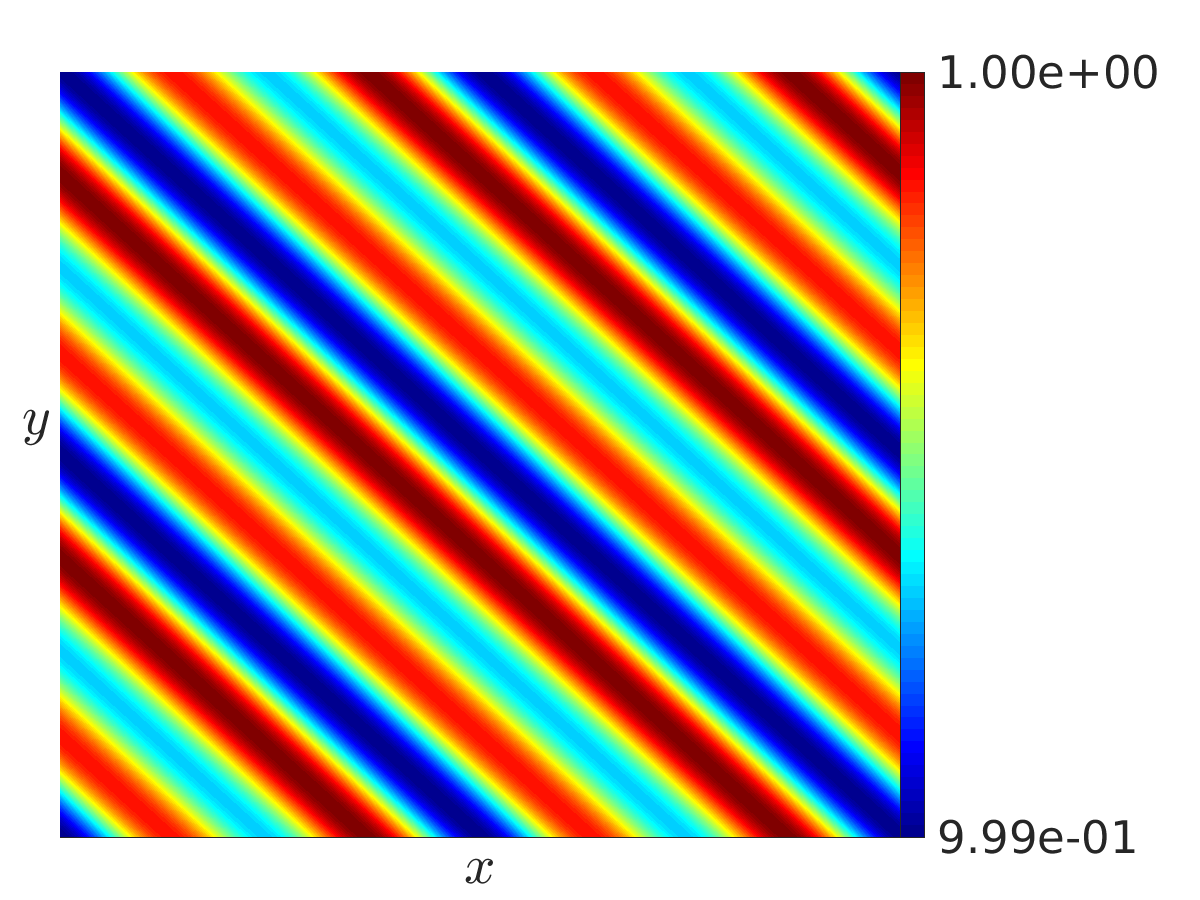}
		\caption{ \bf $\vr$ - Mean}
	\end{subfigure}	
	\begin{subfigure}{0.32\textwidth}
		\includegraphics[width=\textwidth]{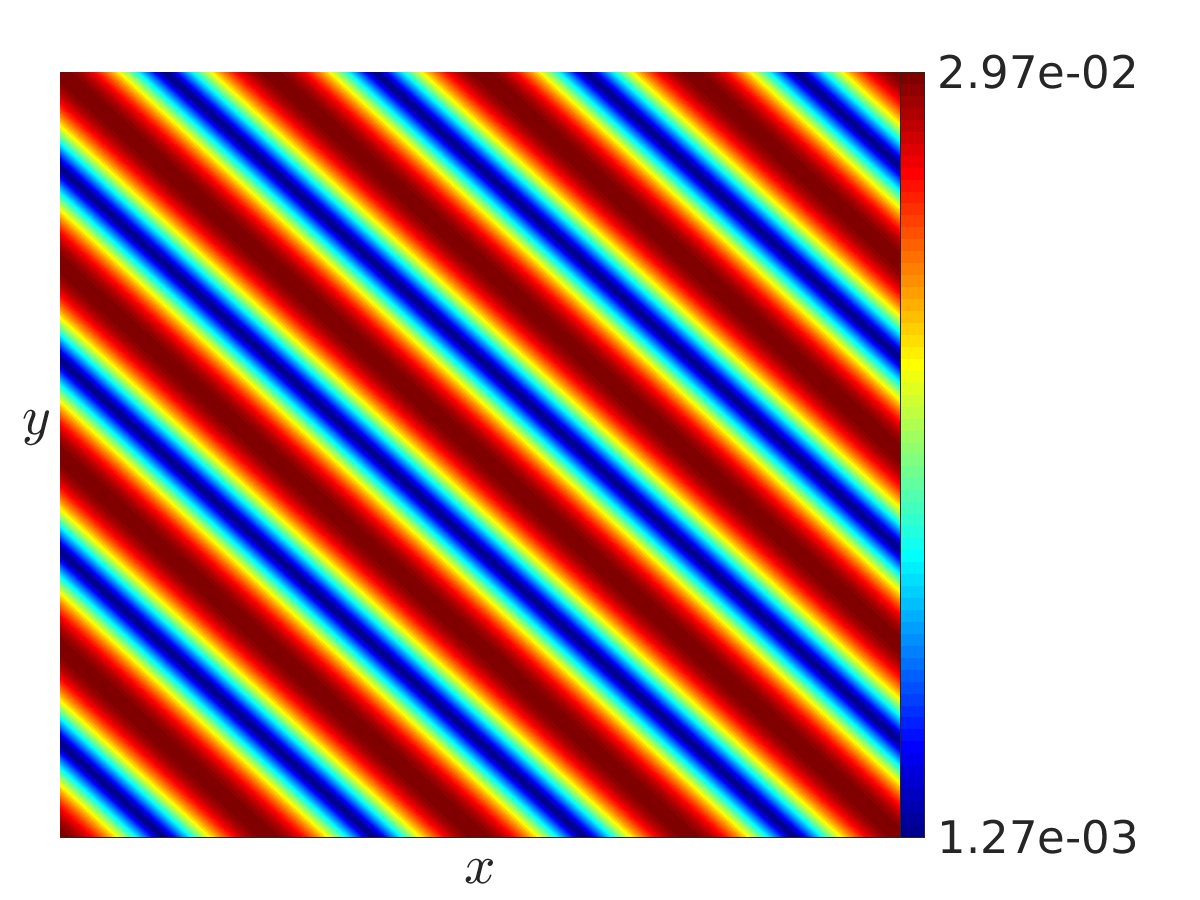}
		\caption{ \bf $\vr$ - Deviation }
	\end{subfigure}	
	\begin{subfigure}{0.32\textwidth}
		\includegraphics[width=\textwidth]{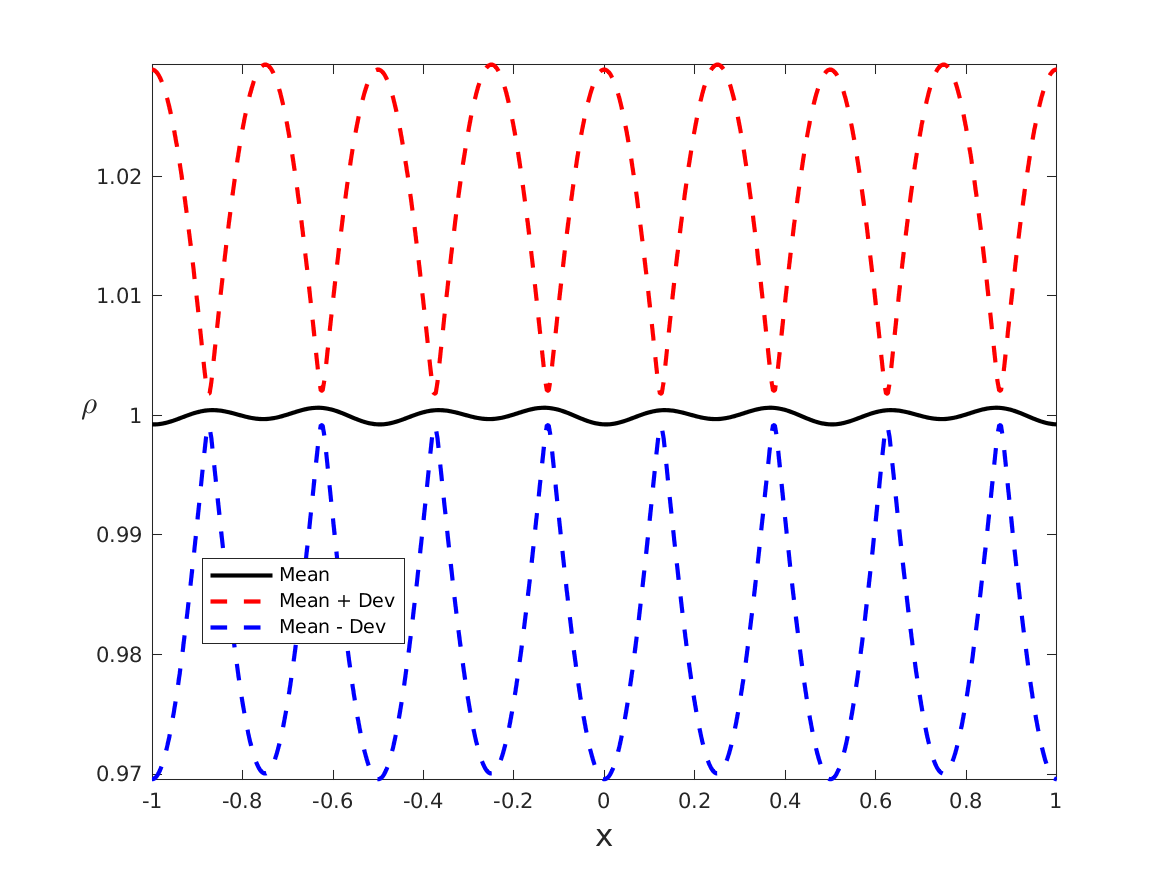}
		\caption{ \bf $\vr$ }
	\end{subfigure}	\\		
	\begin{subfigure}{0.32\textwidth}
		\includegraphics[width=\textwidth]{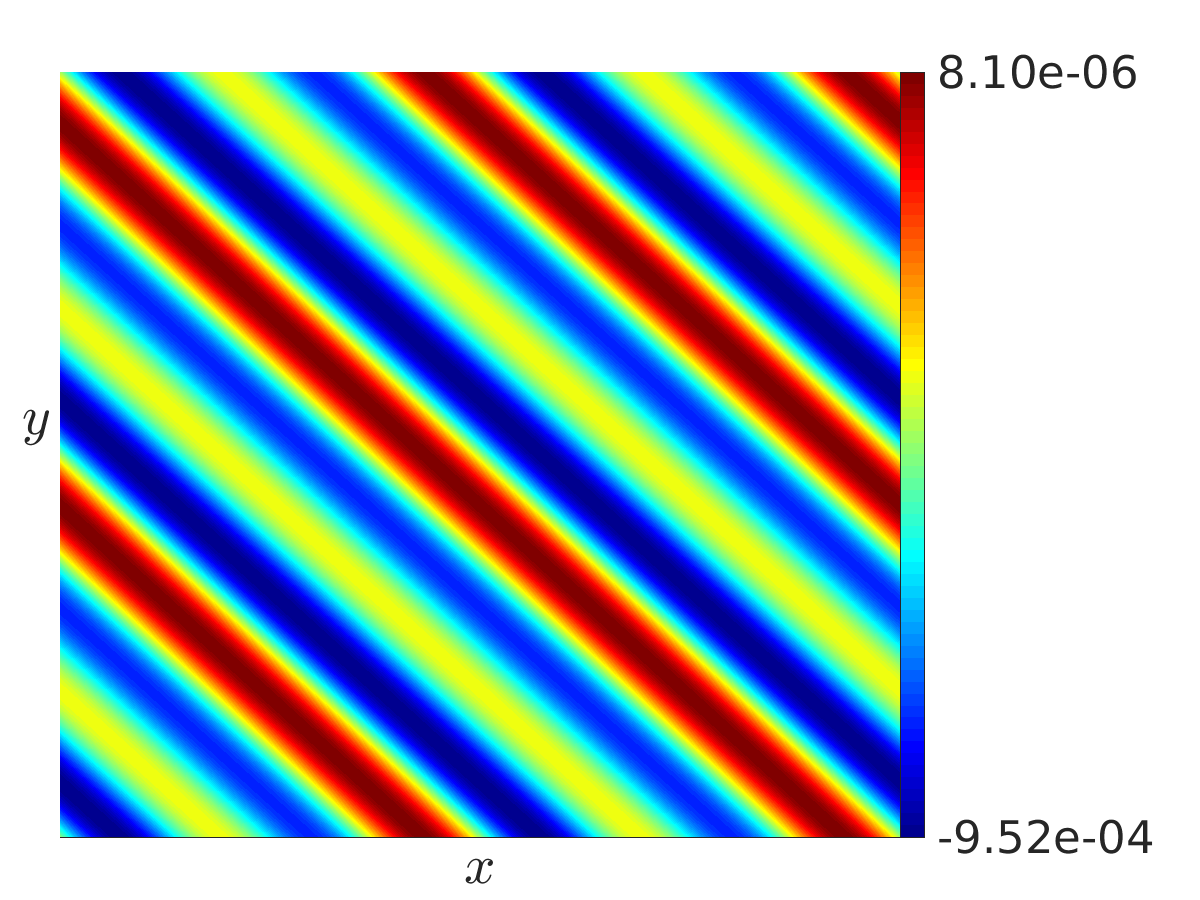}
		\caption{ \bf $u_1$ - Mean}
	\end{subfigure}	
	\begin{subfigure}{0.32\textwidth}
		\includegraphics[width=\textwidth]{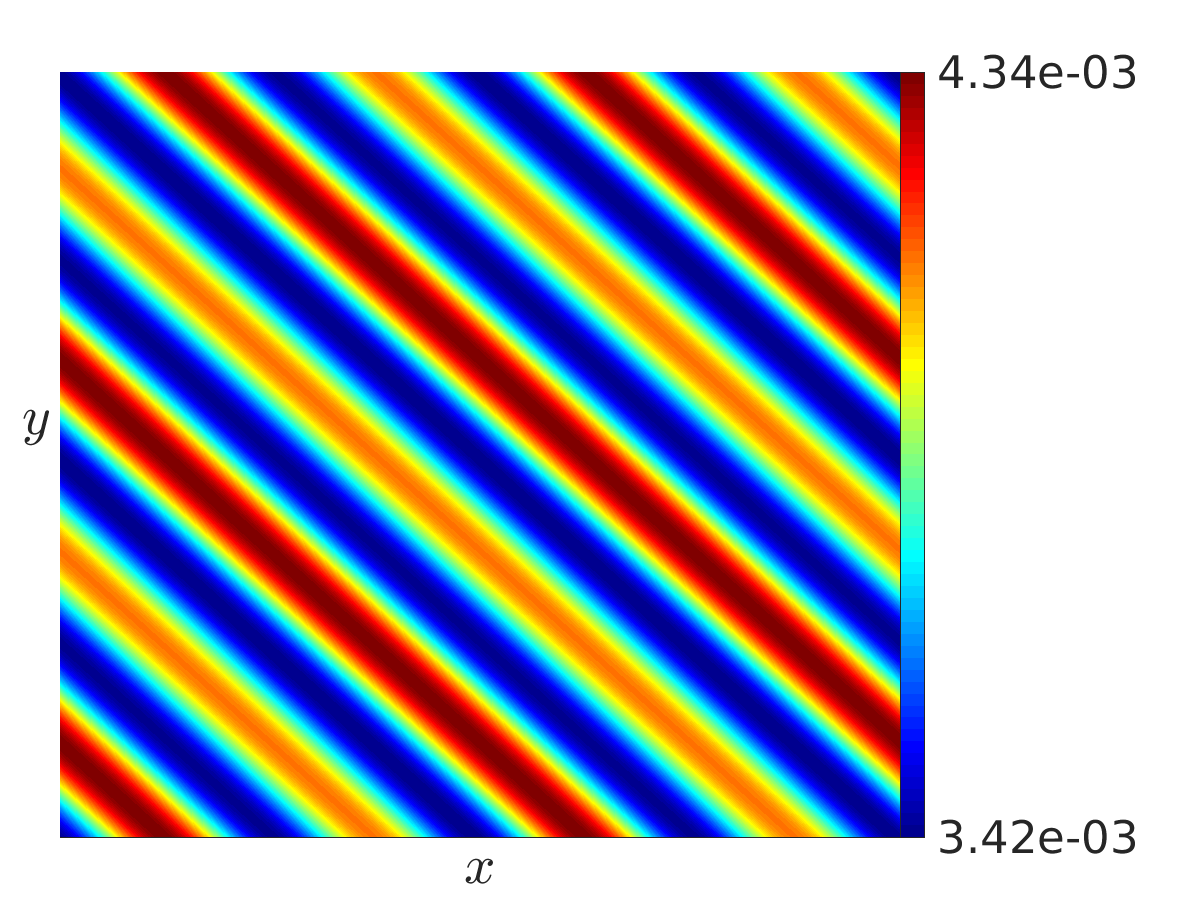}
		\caption{ \bf $u_1$ - Variance}
	\end{subfigure}	
	\begin{subfigure}{0.32\textwidth}
		\includegraphics[width=\textwidth]{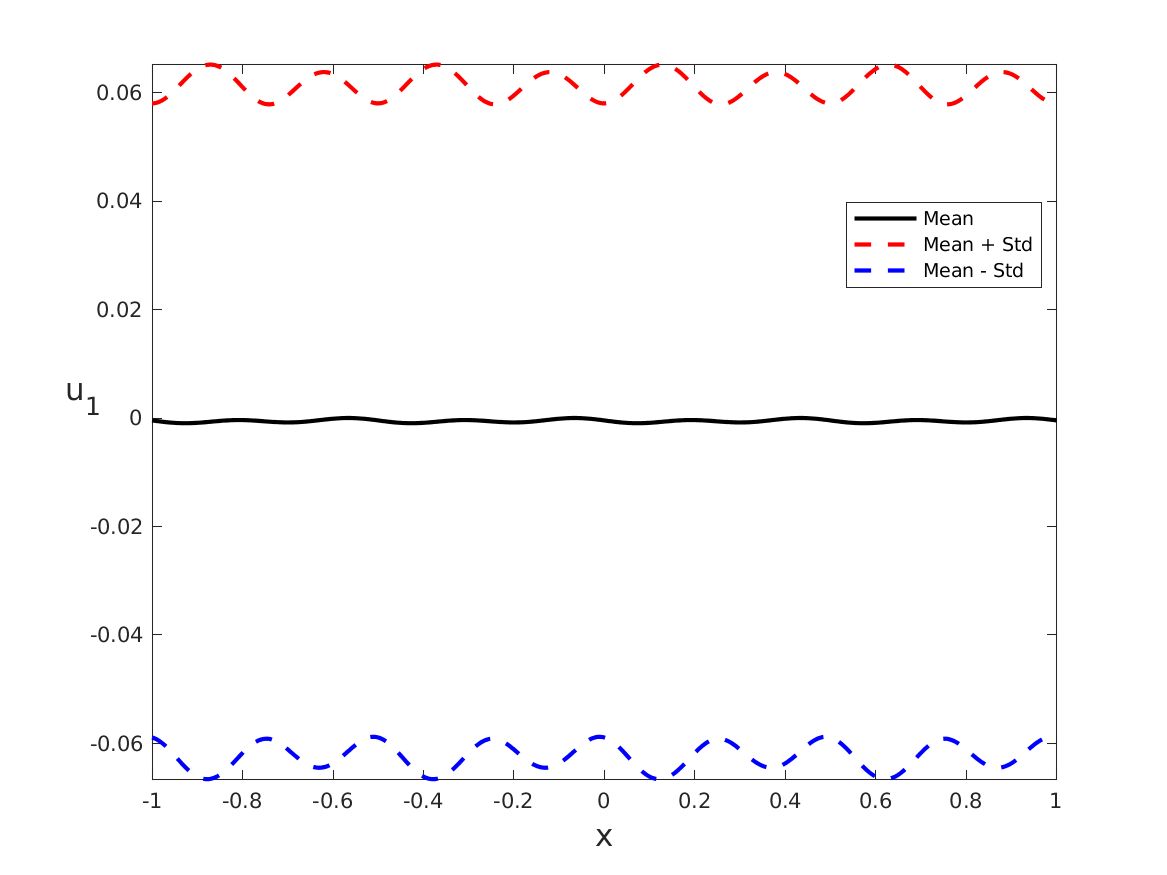}
		\caption{ \bf $u_1$}
	\end{subfigure}	\\
	\begin{subfigure}{0.32\textwidth}
		\includegraphics[width=\textwidth]{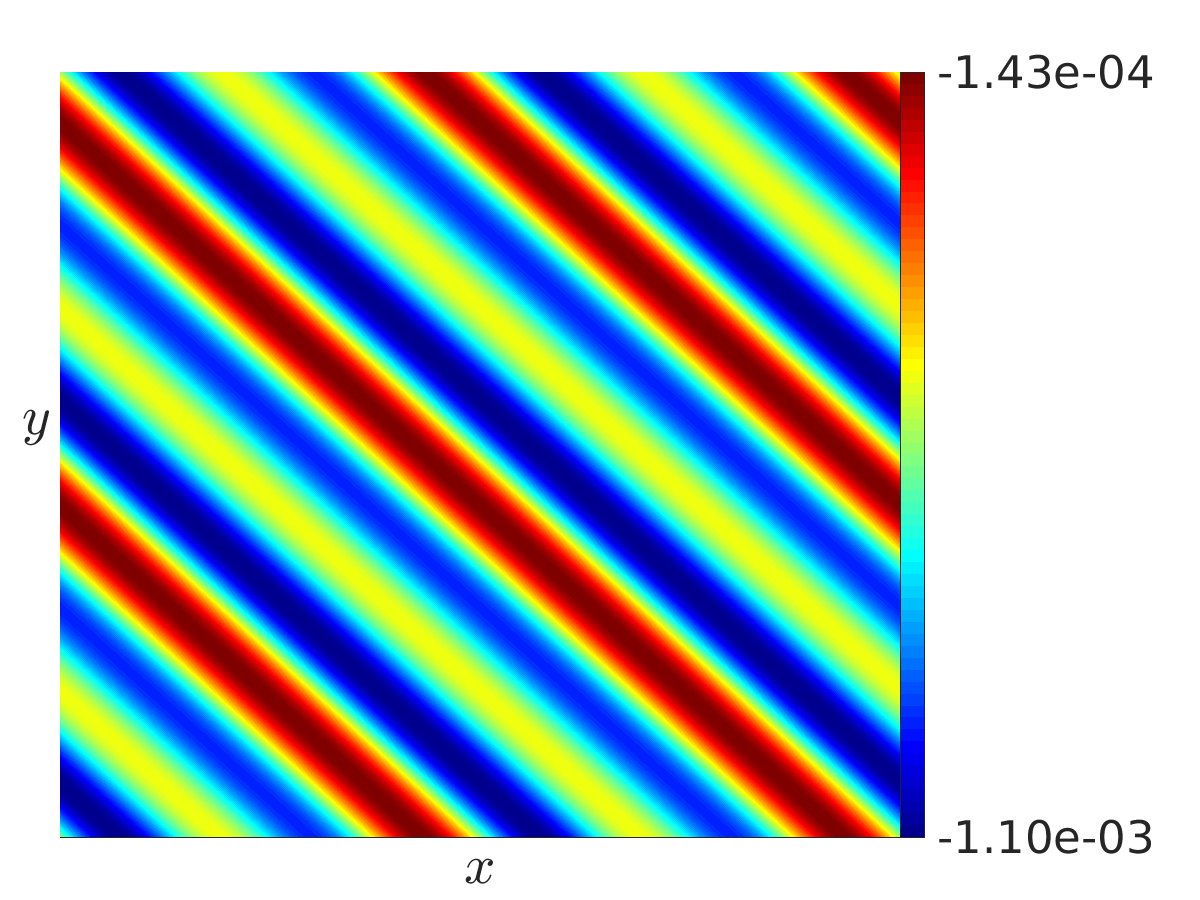}
		\caption{ \bf $u_2$ - Mean}
	\end{subfigure}	
	\begin{subfigure}{0.32\textwidth}
		\includegraphics[width=\textwidth]{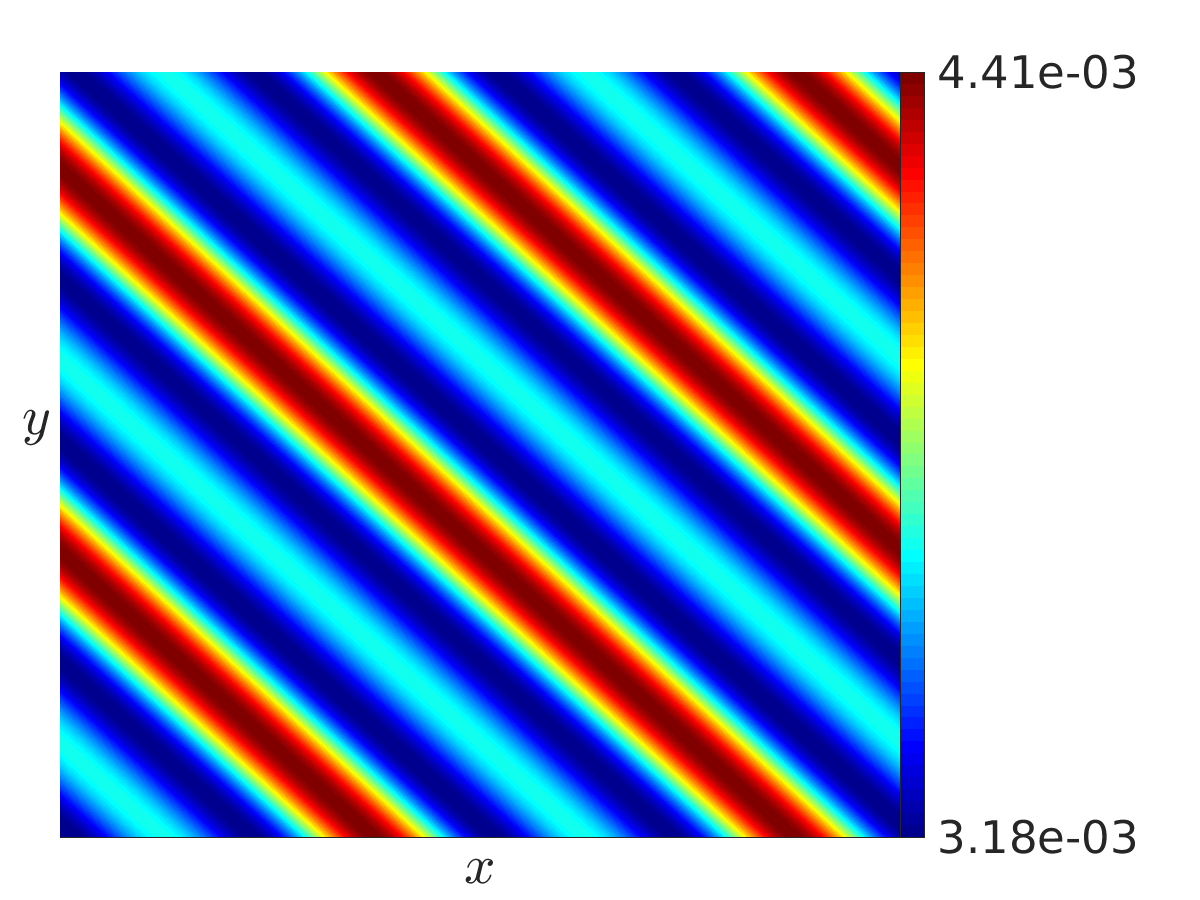}
		\caption{ \bf $u_2$ - Variance}
	\end{subfigure}	
	\begin{subfigure}{0.32\textwidth}
		\includegraphics[width=\textwidth]{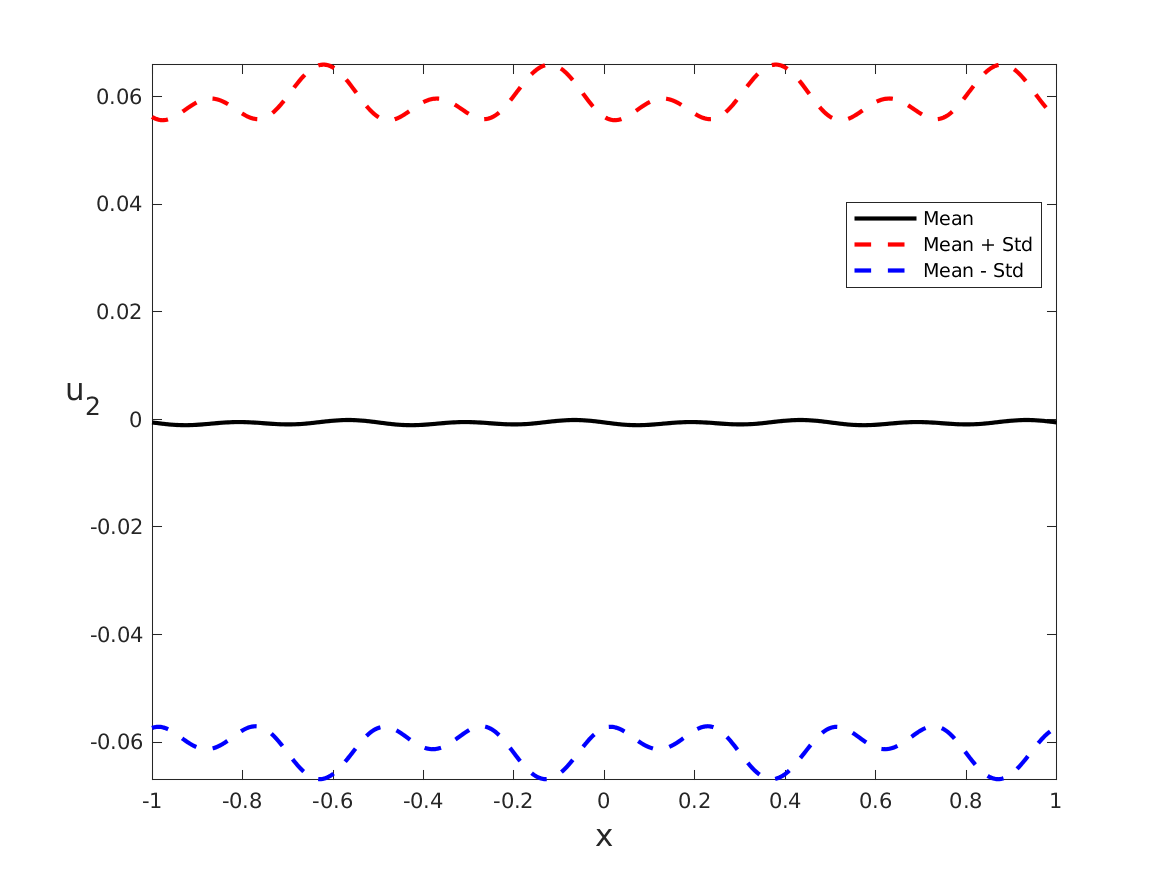}
		\caption{ \bf $u_2$}
	\end{subfigure}	
	\caption{  \small{Example \ref{example1}: Numerical solutions obtained by the FV method. Left: Mean-value of $\vr, \vu$; Middle: Deviation/variance of $\vr, \vu$; Right:  $\vr, \vu$ at the line $x = y$.}}\label{ex1}
\end{figure}

\begin{figure}[htbp]
	\setlength{\abovecaptionskip}{0.cm}
	\setlength{\belowcaptionskip}{-0.cm}
	\centering
		\includegraphics[width=\textwidth]{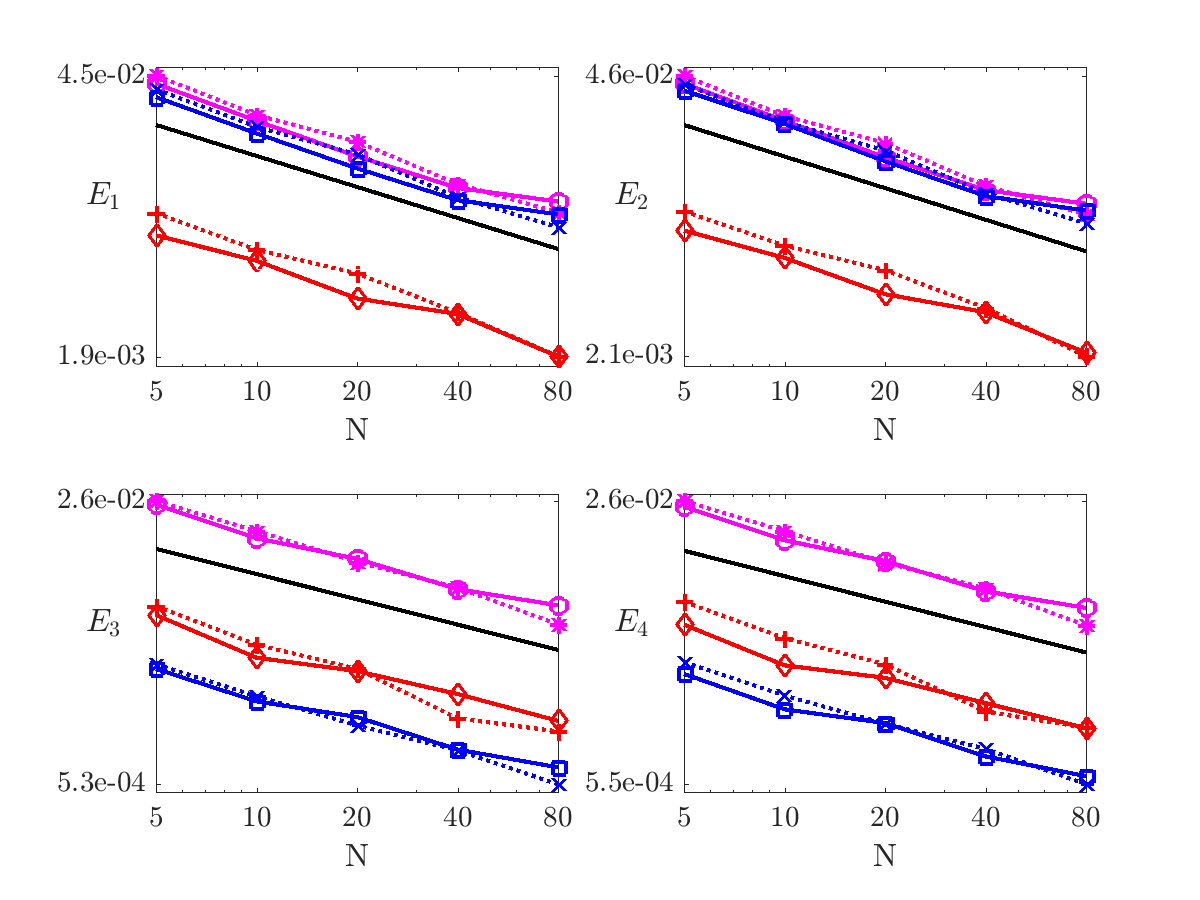}\\
		\includegraphics[width=0.6\textwidth]{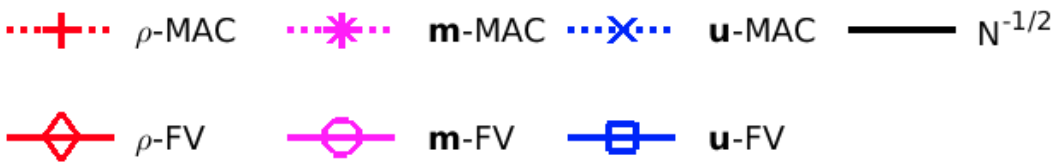}		
	\caption{  \small{Example \ref{example1}:  Statistical errors of the mean ($E_1, E_2$) and the deviation/variance ($E_3, E_4$).
	The black solid lines without any marker denote the reference slope of $N^{-1/2}$.  }}\label{ex1-dev}
\end{figure}

%

\subsection{Experiment 2: Random perturbation of a vortex.} \label{example2}
In the second experiment we simulate a more complicated physical structure -- vortex with a random noise.
Initial data are given by
\begin{subequations}
\begin{align}
& \vr_0(x) = 1 + Y_1(\omega) \cos(2\pi (x_1+x_2)), \\
& \vu_0(x) = (Y_2(\omega) , Y_3(\omega) )^t +  \begin{cases}
\left(\frac{ [1-\cos(4\pi |x|)] x_2}{|x|},\ -\frac{ [1-\cos(4\pi |x|)]x_1}{|x|} \right)^t, & \mbox{if}\  |x| < 0.5,\\
(0,0)^t, & \mbox{otherwise}
\end{cases}
\end{align}
where
\begin{equation}
Y_j(\omega) \ \sim \ \mathcal{U}\left( -0.1, 0.1\right),  \quad j =1,2,3.
\end{equation}
\end{subequations}
The mean and  deviation/variance of numerical solutions $\vr, \vu$ obtained by the FV method are showed in Figure~\ref{ex2}.
Analogously as above we investigate the errors $E_1, \dots, E_4$ obtained by both methods and present them in Figure~\ref{ex2-dev}.

The numerical results indicate that the mean and deviation/variance of $\vr, \vu, \vm$ converge with a rate $-1/2$.

\begin{figure}[htbp]
	\setlength{\abovecaptionskip}{0.cm}
	\setlength{\belowcaptionskip}{-0.cm}
	\centering
	\begin{subfigure}{0.32\textwidth}
		\includegraphics[width=\textwidth]{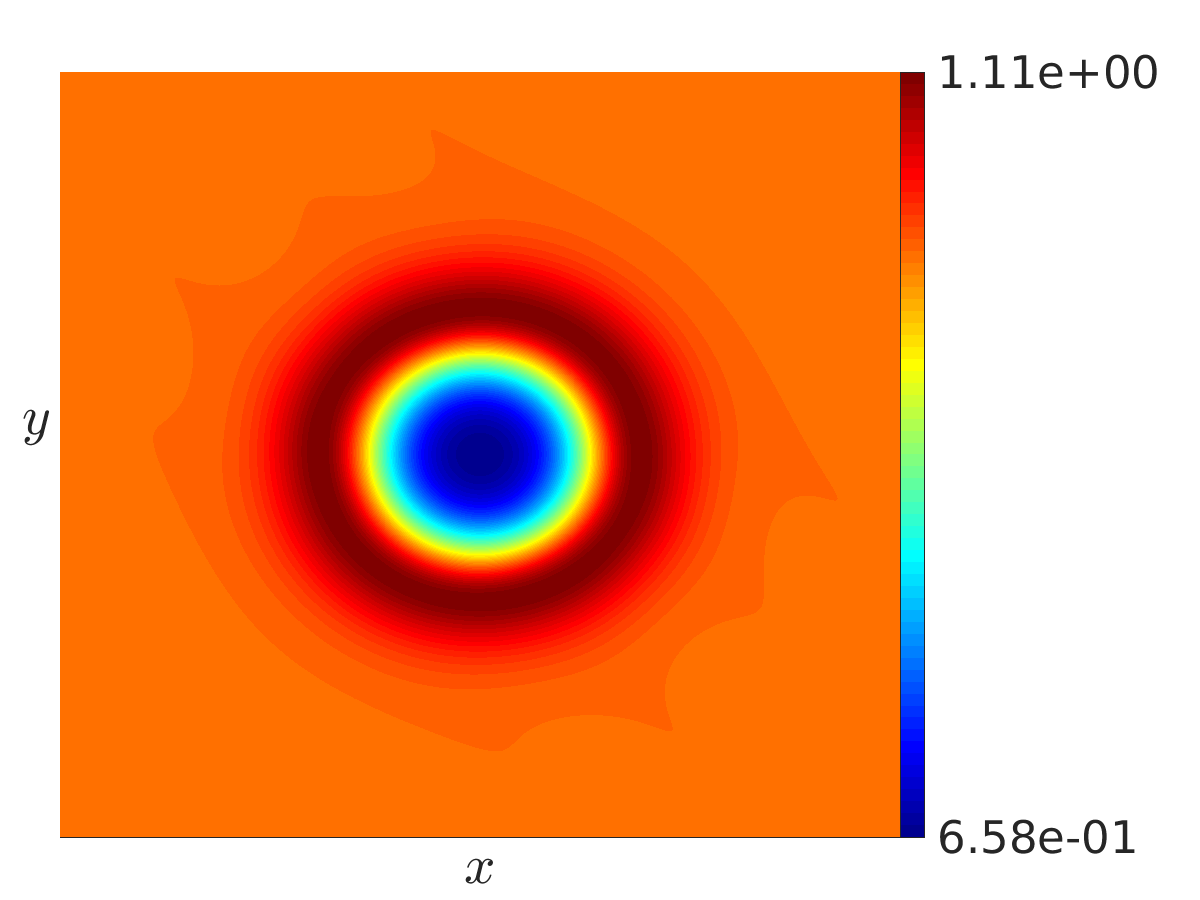}
		\caption{ \bf $\vr$ - Mean}
	\end{subfigure}	
	\begin{subfigure}{0.32\textwidth}
		\includegraphics[width=\textwidth]{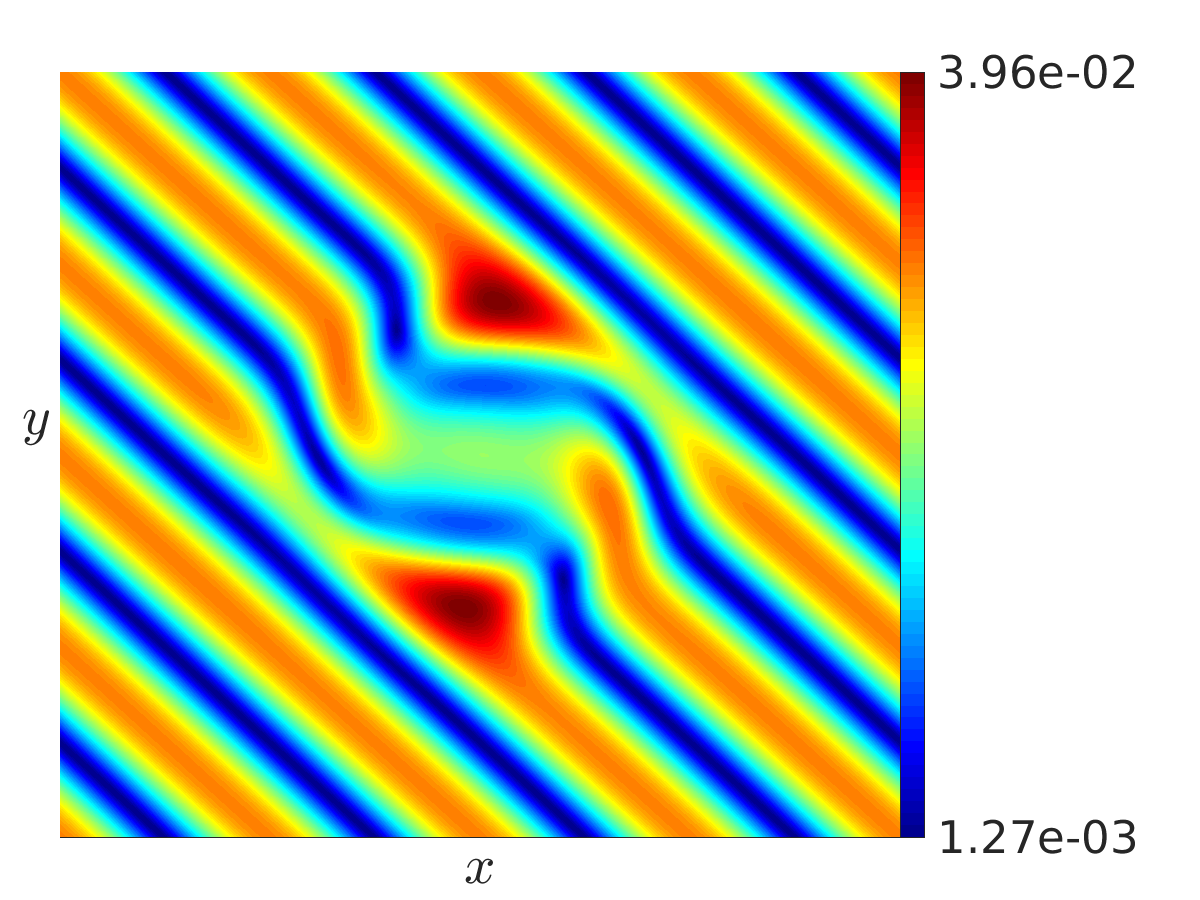}
		\caption{ \bf $\vr$ - Deviation }
	\end{subfigure}	
	\begin{subfigure}{0.32\textwidth}
		\includegraphics[width=\textwidth]{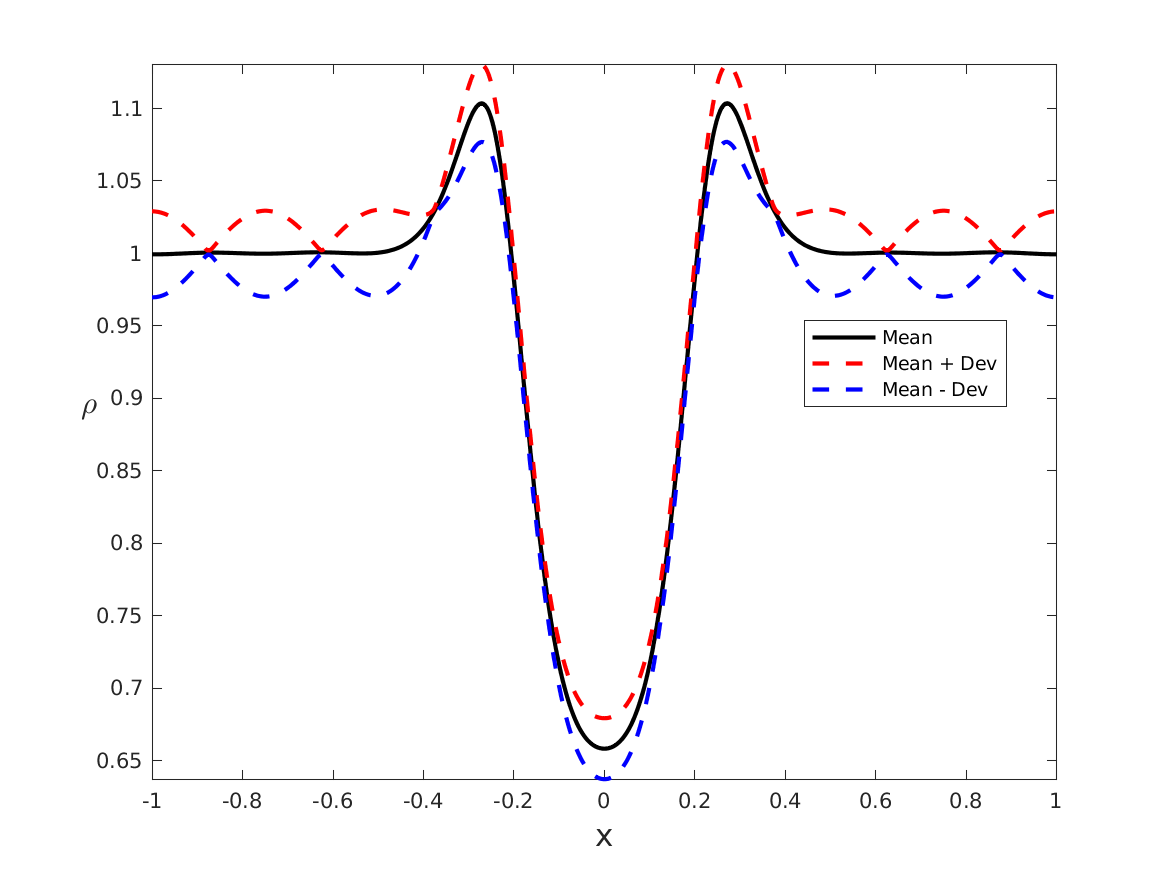}
		\caption{ \bf $\vr$ }
	\end{subfigure}	\\		
	\begin{subfigure}{0.32\textwidth}
		\includegraphics[width=\textwidth]{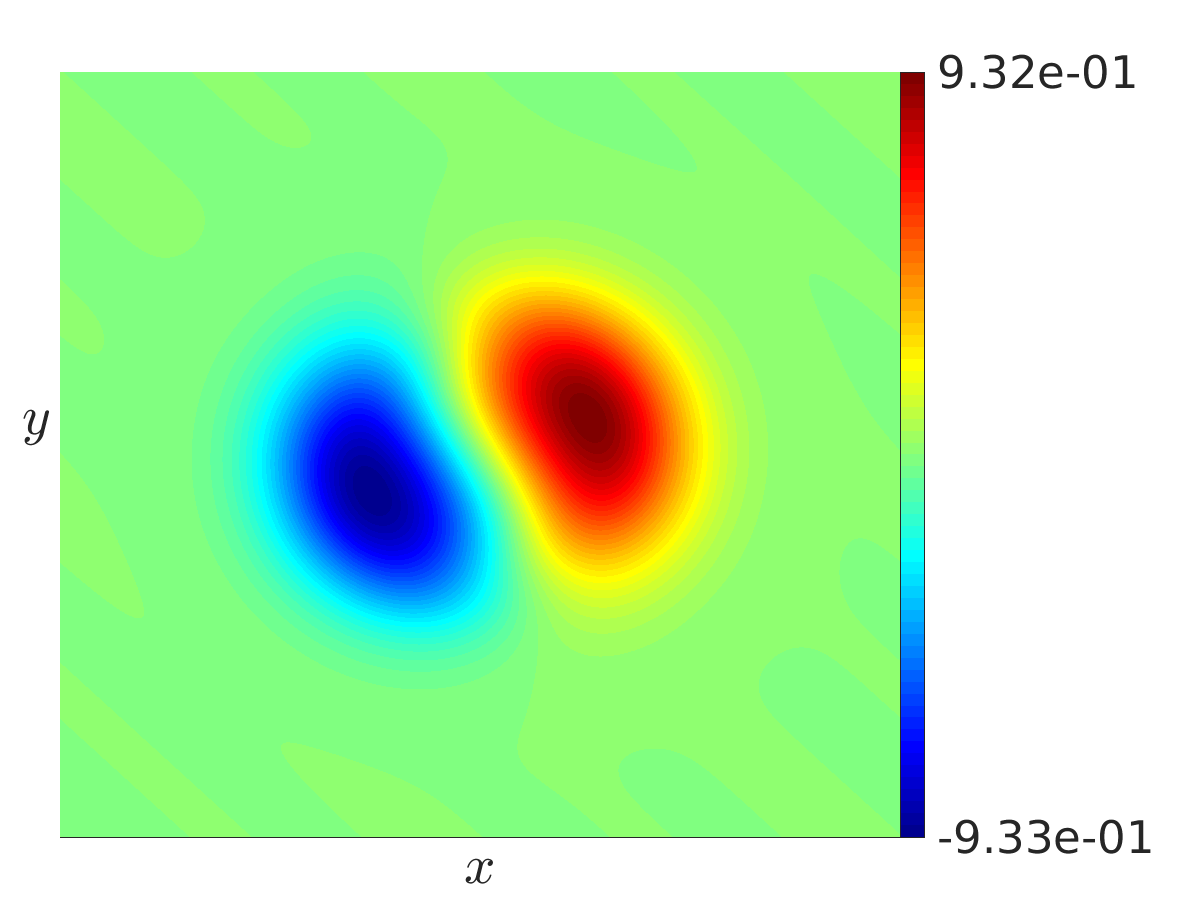}
		\caption{ \bf $u_1$ - Mean}
	\end{subfigure}	
	\begin{subfigure}{0.32\textwidth}
		\includegraphics[width=\textwidth]{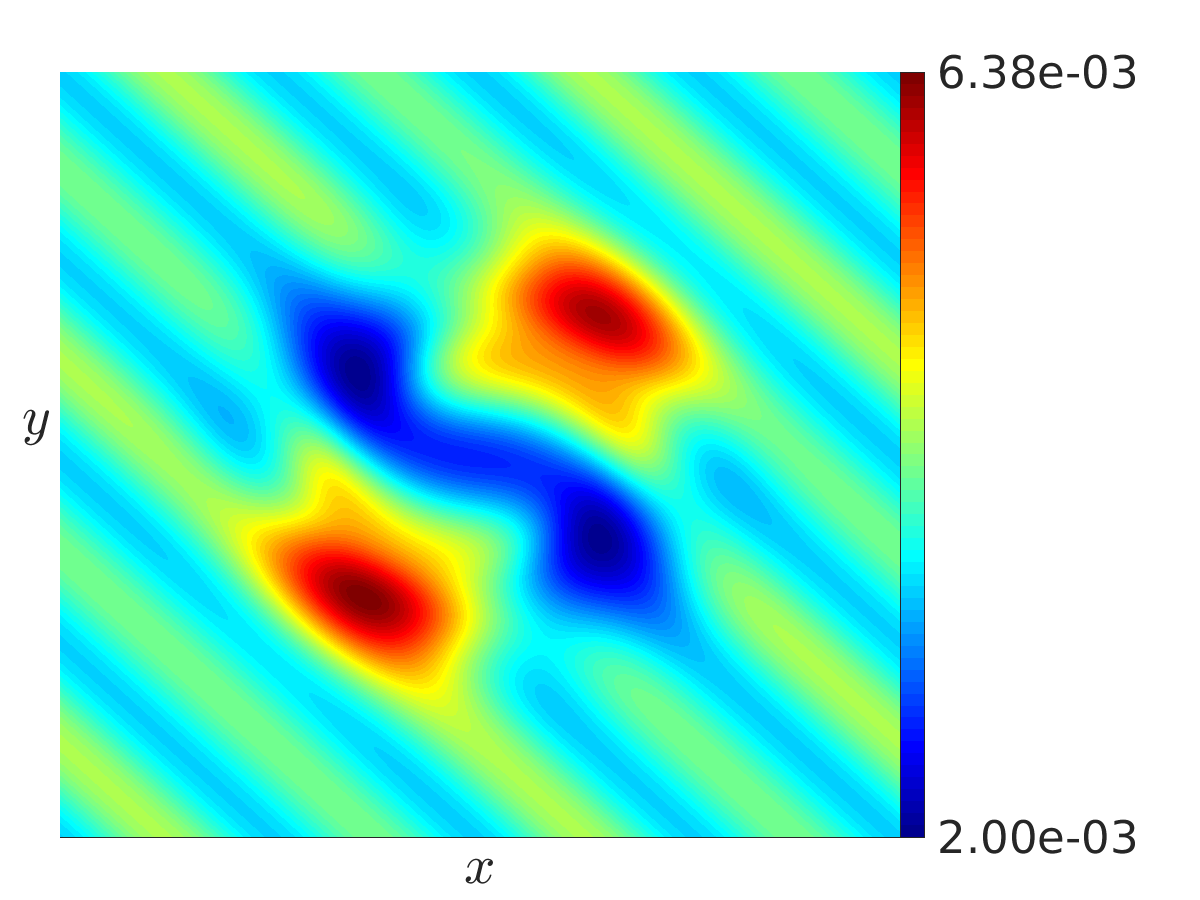}
		\caption{ \bf $u_1$ - Variance}
	\end{subfigure}	
	\begin{subfigure}{0.32\textwidth}
		\includegraphics[width=\textwidth]{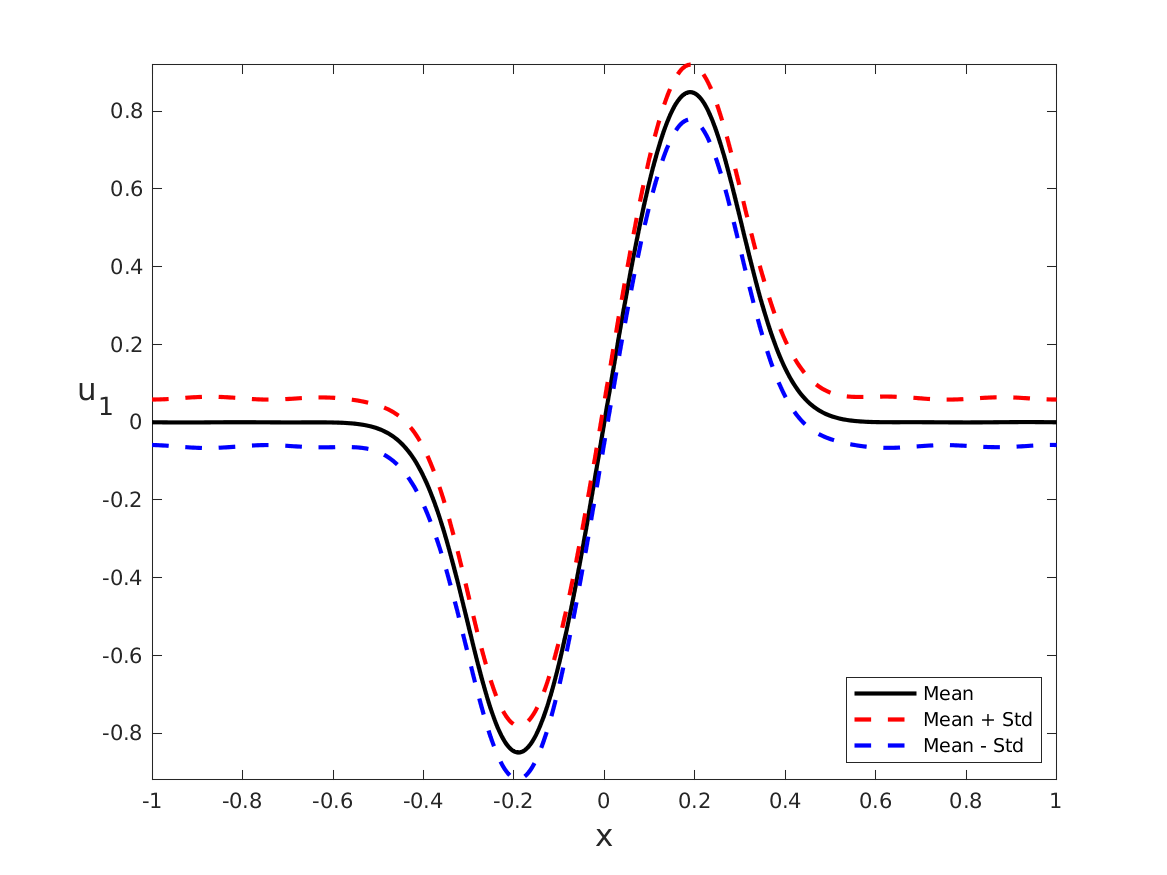}
		\caption{ \bf $u_1$}
	\end{subfigure}	\\
	\begin{subfigure}{0.32\textwidth}
		\includegraphics[width=\textwidth]{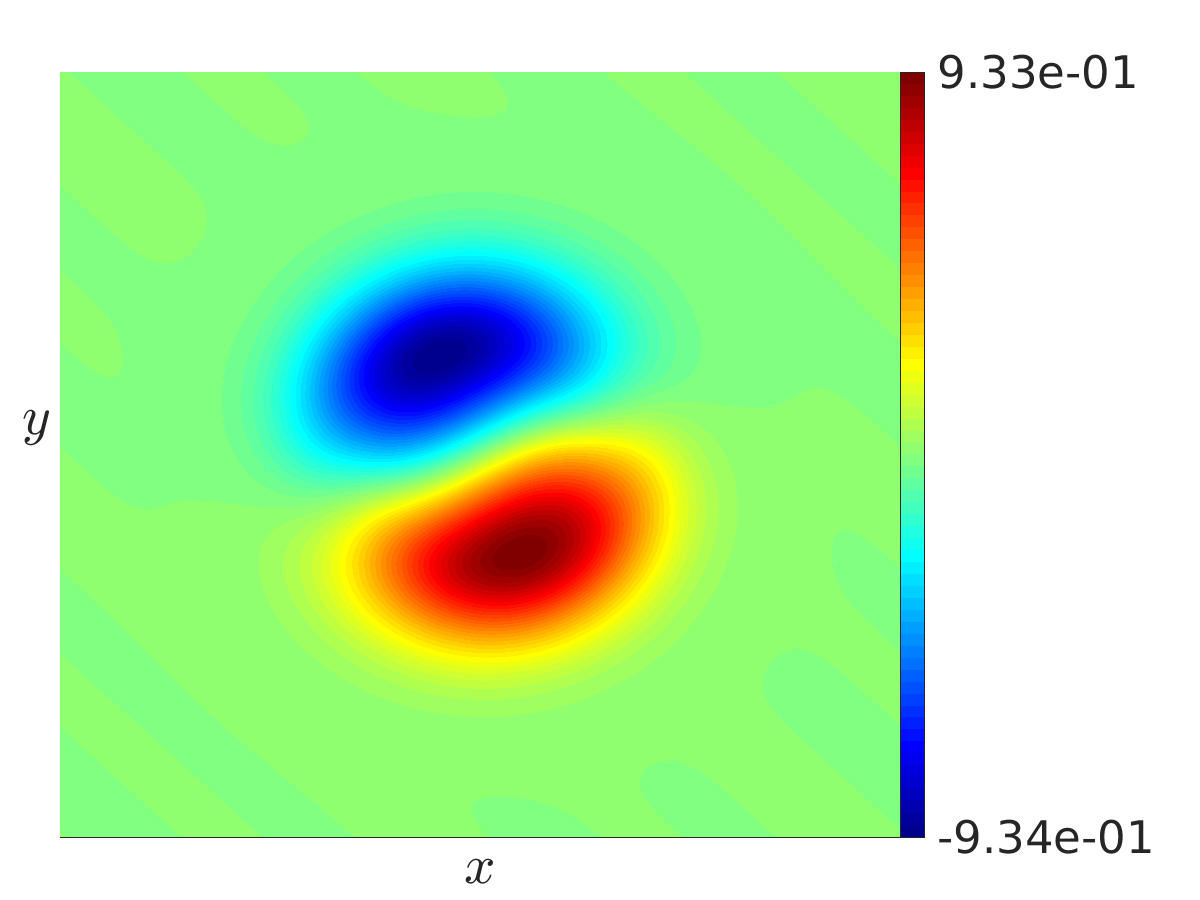}
		\caption{ \bf $u_2$ - Mean}
	\end{subfigure}	
	\begin{subfigure}{0.32\textwidth}
		\includegraphics[width=\textwidth]{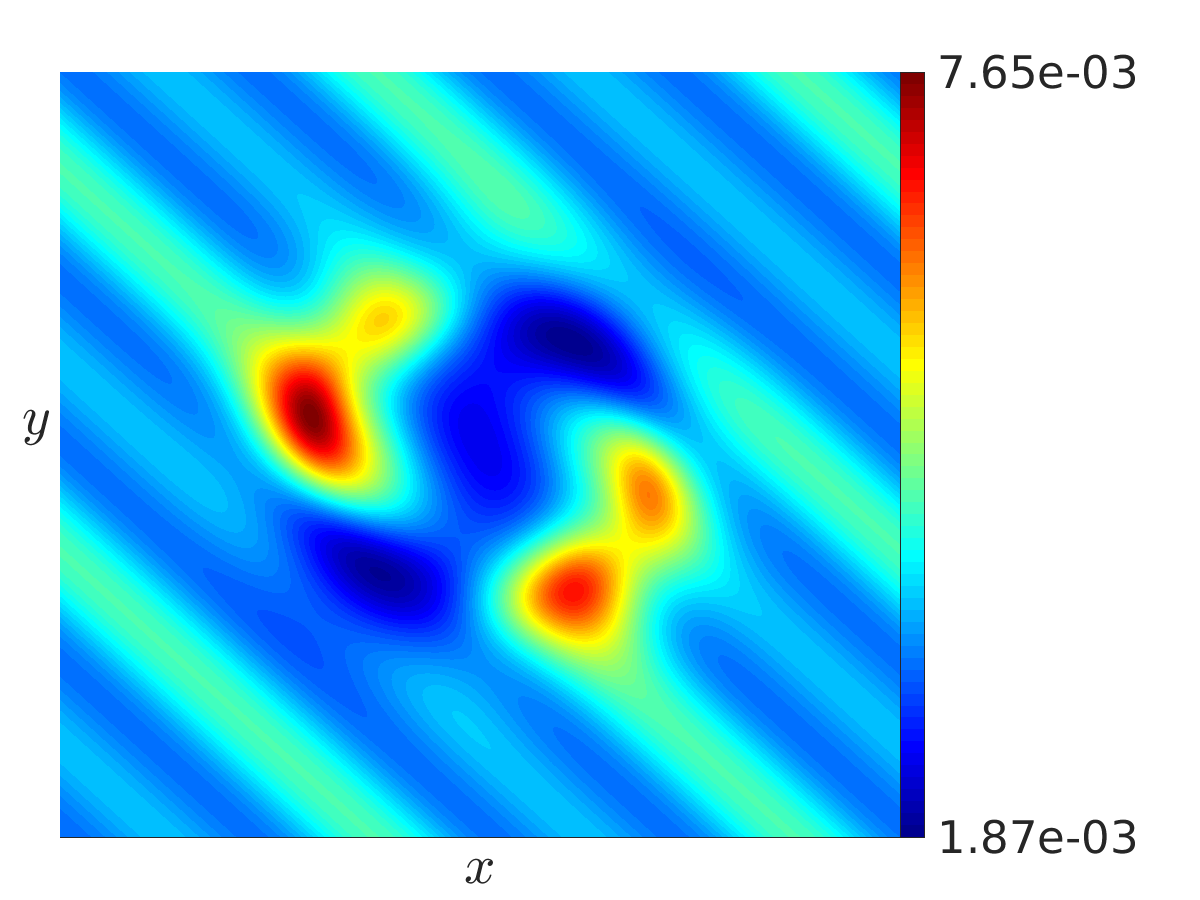}
		\caption{ \bf $u_2$ - Variance}
	\end{subfigure}	
	\begin{subfigure}{0.32\textwidth}
		\includegraphics[width=\textwidth]{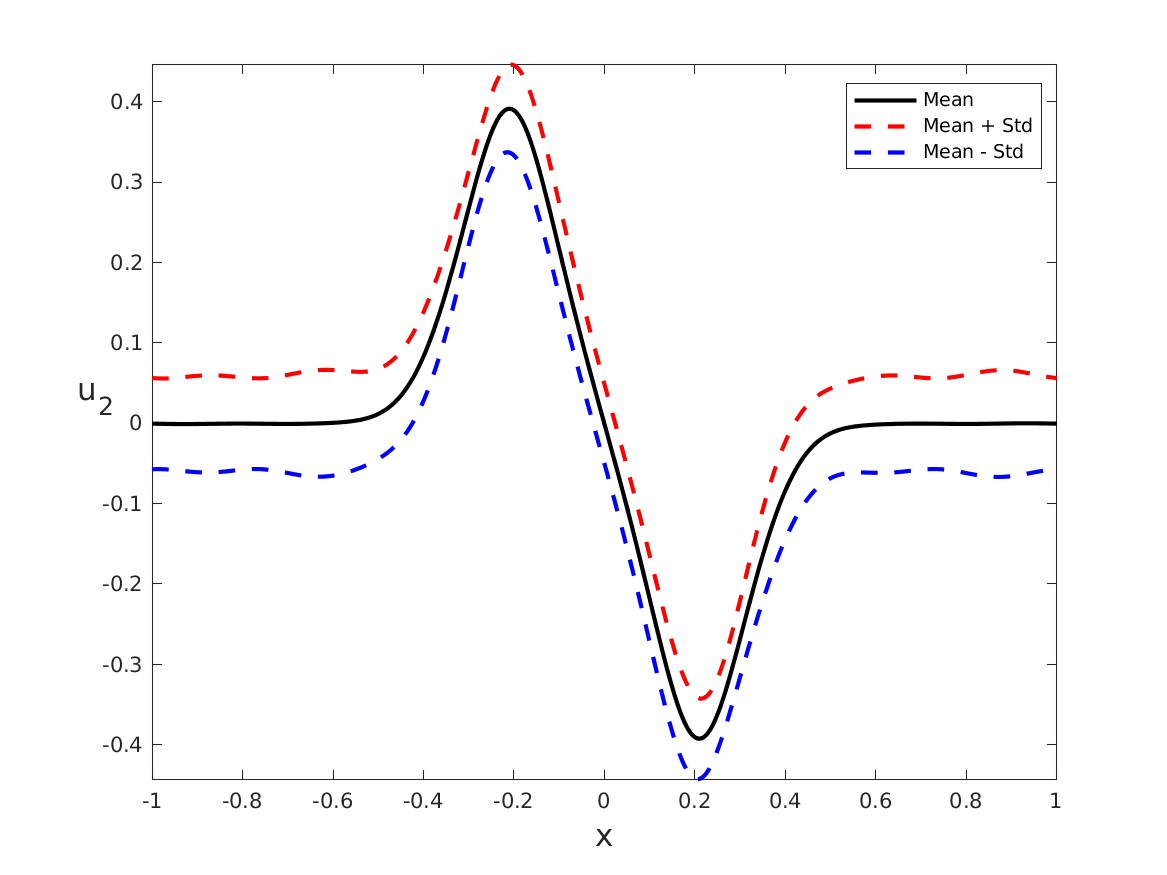}
		\caption{ \bf $u_2$}
	\end{subfigure}	
	\caption{  \small{Example \ref{example2}: Numerical solutions obtained by the FV method. Left: Mean-value of $\vr, \vu$; Middle: Deviation/variance of $\vr, \vu$; Right:  $\vr, \vu$ at the line $x = y$.  }}\label{ex2}
\end{figure}

\begin{figure}[htbp]
	\setlength{\abovecaptionskip}{0.cm}
	\setlength{\belowcaptionskip}{-0.cm}
	\centering
		\includegraphics[width=\textwidth]{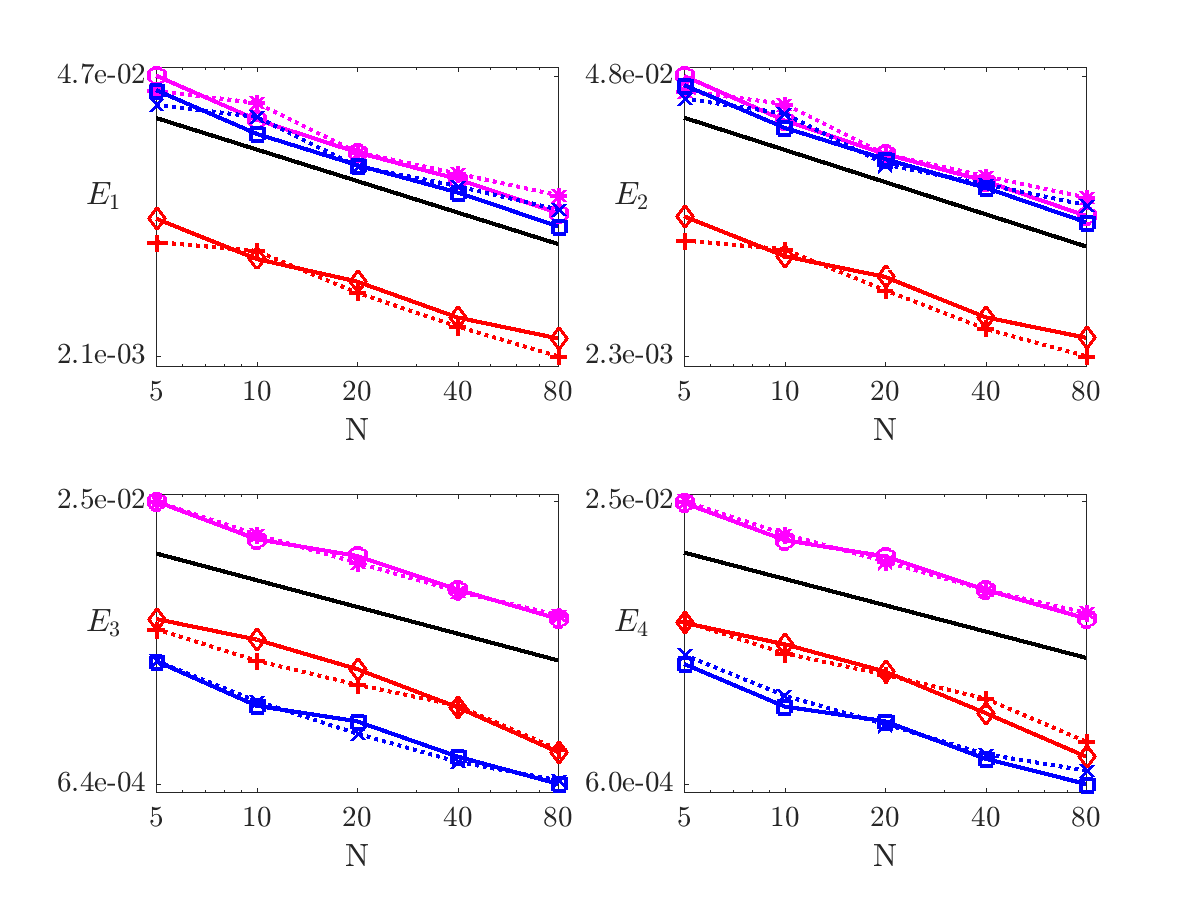}\\
		\includegraphics[width=0.6\textwidth]{graphs/legend}	
	\caption{  \small{Example \ref{example2}:  Statistical errors of the mean ($E_1, E_2$) and the deviation/variance ($E_3, E_4$).
	The black solid lines without any marker denote the reference slope of $N^{-1/2}$.  }}\label{ex2-dev}
\end{figure}


\subsection{Experiment 3: Random perturbation of the vortex interface.} \label{example3}
In the last experiment we simulate a vortex with a random perturbation of velocity interface.
In order to compare with Experiment 2, we construct this experiment in such a way that the mean of the initial data $(\vr, \vu)$ are the same as in Experiment 2.
Specifically, the initial data are set as
\begin{subequations}
\begin{align}
& \vr_0(x) = 1, \\
& \vu_0(x) = \begin{cases}
 \left(\frac{ [1-\cos( 2 \pi |x| / I)] x_2}{|x|},\ -\frac{ [1-\cos( 2 \pi |x| / I)]  /I) x_1}{|x|} \right)^t, & \mbox{if}\  |x| < I,\\
(0,0)^t, & \mbox{otherwise}
\end{cases}
\end{align}
where
\begin{equation}
I = 0.5+ Y(\omega), \quad Y \ \sim \ \mathcal{U}\left( -0.1, 0.1\right),  \quad j =1,2,3.
\end{equation}
\end{subequations}
Figure~\ref{ex3} shows the mean and the deviation/variance of numerical solutions $\vr, \vu$ obtained  by the FV method.
The errors $E_1, \dots, E_4$ obtained by the FV and MAC  methods are showed in Figure~\ref{ex2-dev}.

Compared with Figure~\ref{ex2}, Figure~\ref{ex3} indicates  that the solution has now some noise on the interface.
On the other hand, we observe again a convergence rate of $-1/2$.

Note that in our experiments we fixed a mesh resolution and analysed experimentally only the statistical errors with respect to a number of samples $N$. In our previous works \cite{FeiLukMizShe,FeLMMiSh,BS_2} we have already investigated thoroughly the discretization error of the FV and MAC methods and therefore we have concentrated here on a novel experimental part, the statistical errors.

\begin{figure}[htbp]
	\setlength{\abovecaptionskip}{0.cm}
	\setlength{\belowcaptionskip}{-0.cm}
	\centering
	\begin{subfigure}{0.32\textwidth}
		\includegraphics[width=\textwidth]{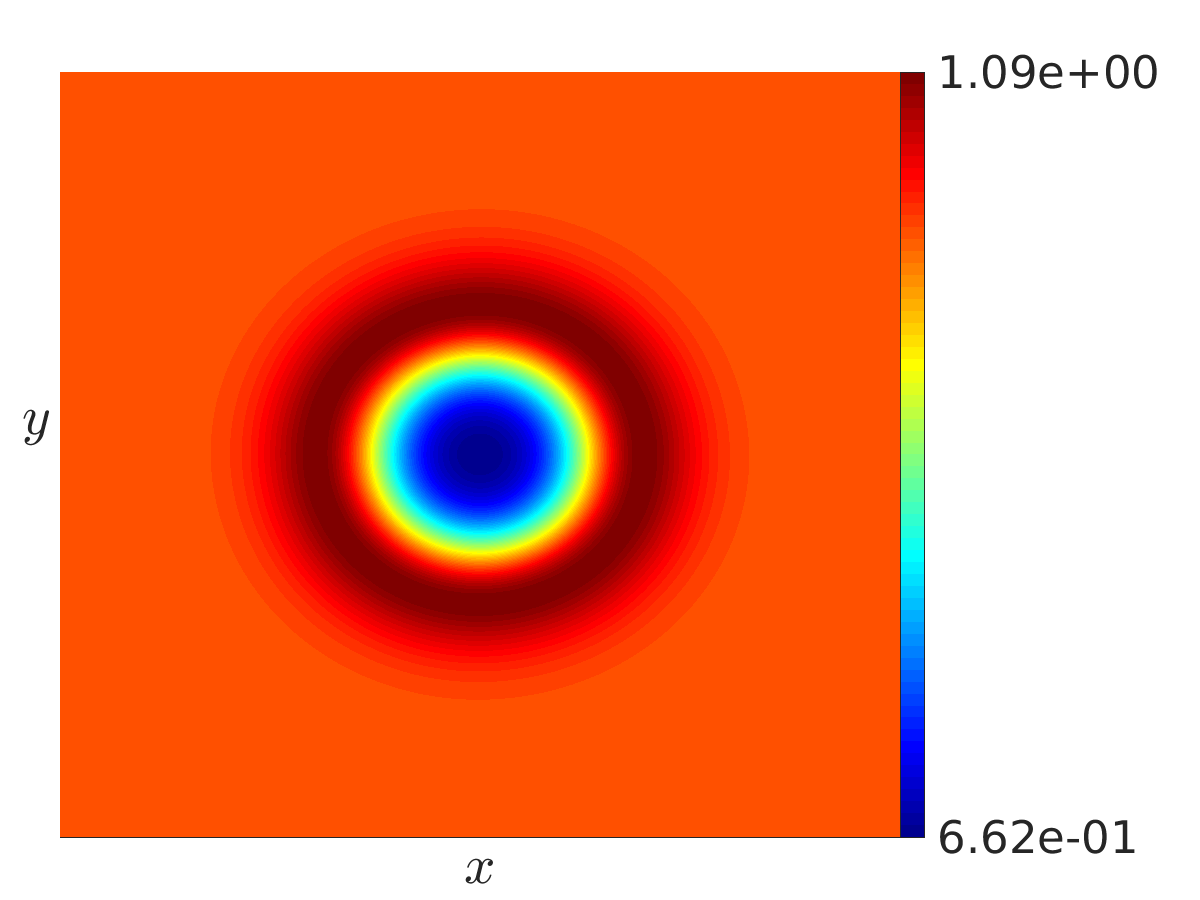}
		\caption{ \bf $\vr$ - Mean}
	\end{subfigure}	
	\begin{subfigure}{0.32\textwidth}
		\includegraphics[width=\textwidth]{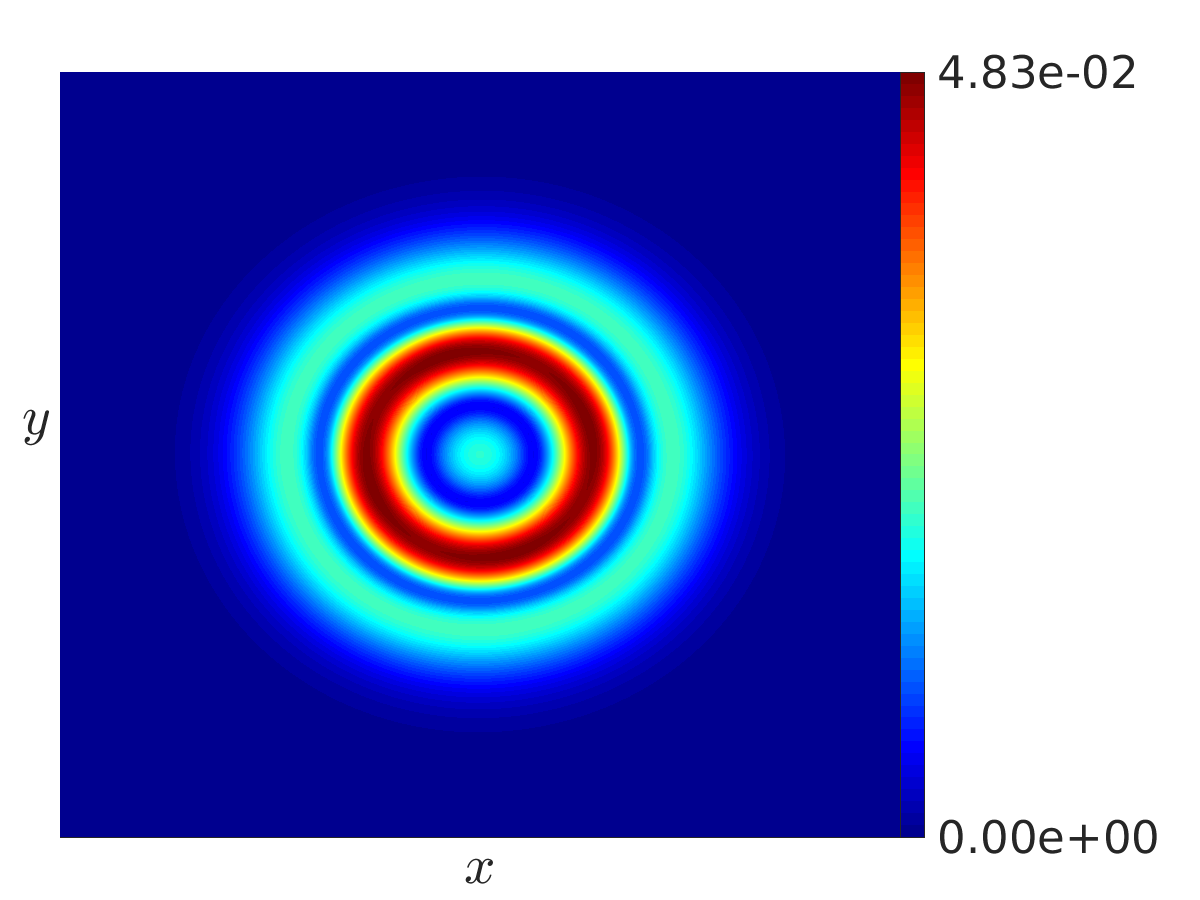}
		\caption{ \bf $\vr$ - Deviation }
	\end{subfigure}	
	\begin{subfigure}{0.32\textwidth}
		\includegraphics[width=\textwidth]{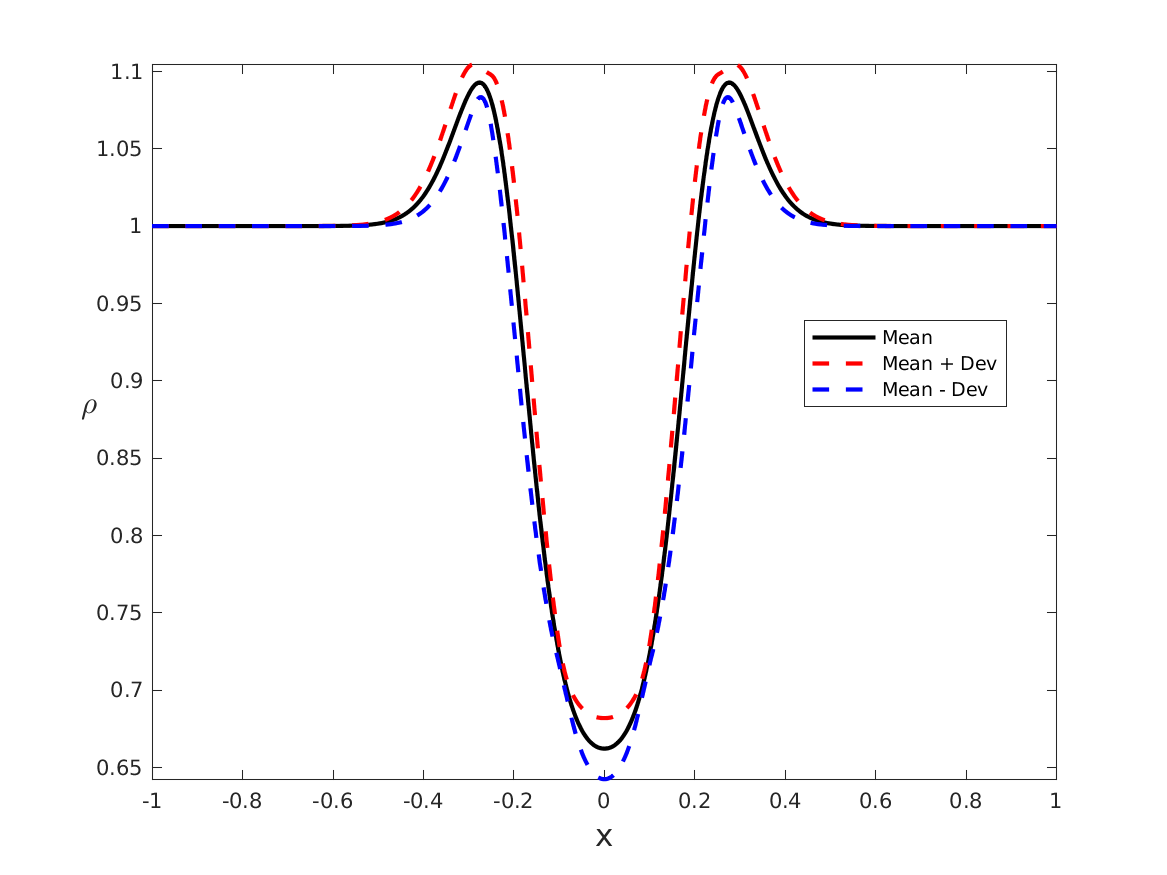}
		\caption{ \bf $\vr$  }
	\end{subfigure}	\\		
	\begin{subfigure}{0.32\textwidth}
		\includegraphics[width=\textwidth]{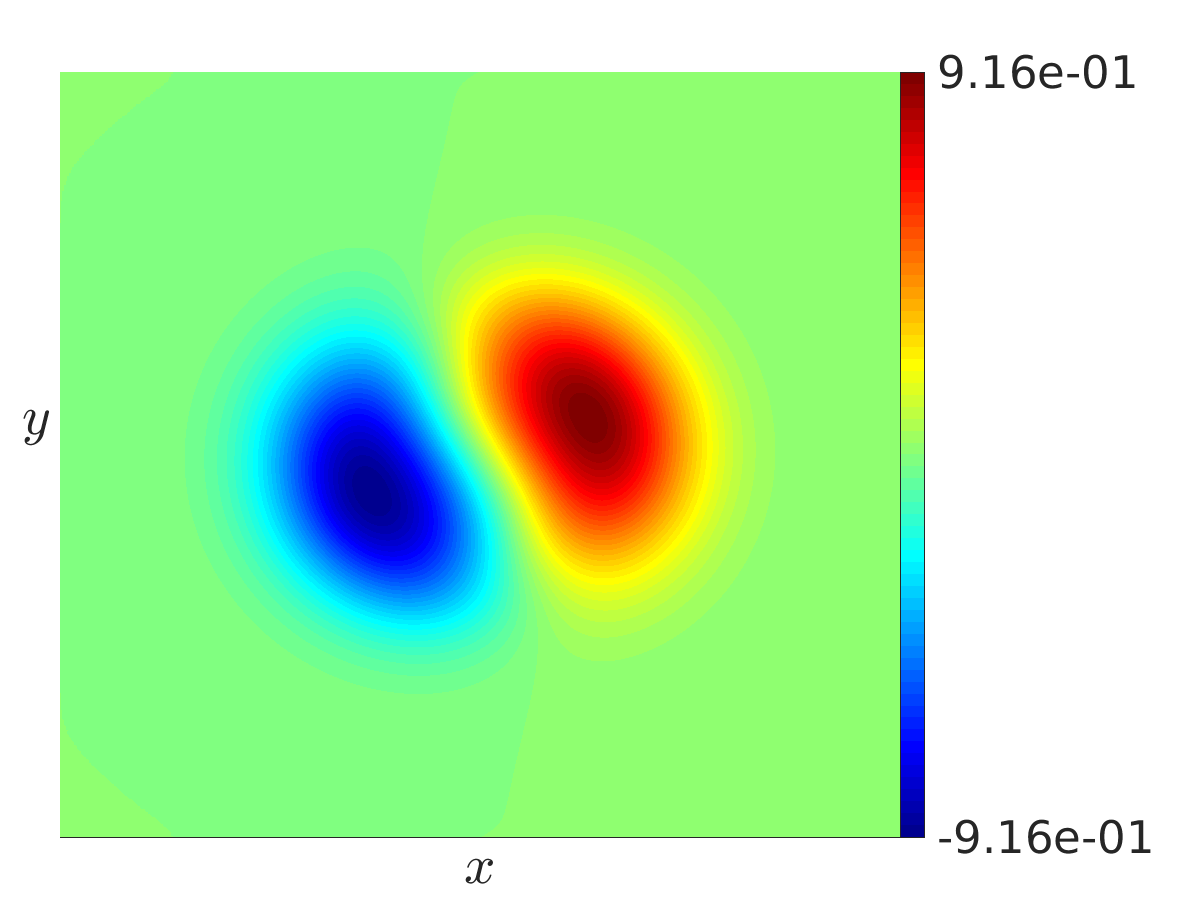}
		\caption{ \bf $u_1$ - Mean}
	\end{subfigure}	
	\begin{subfigure}{0.32\textwidth}
		\includegraphics[width=\textwidth]{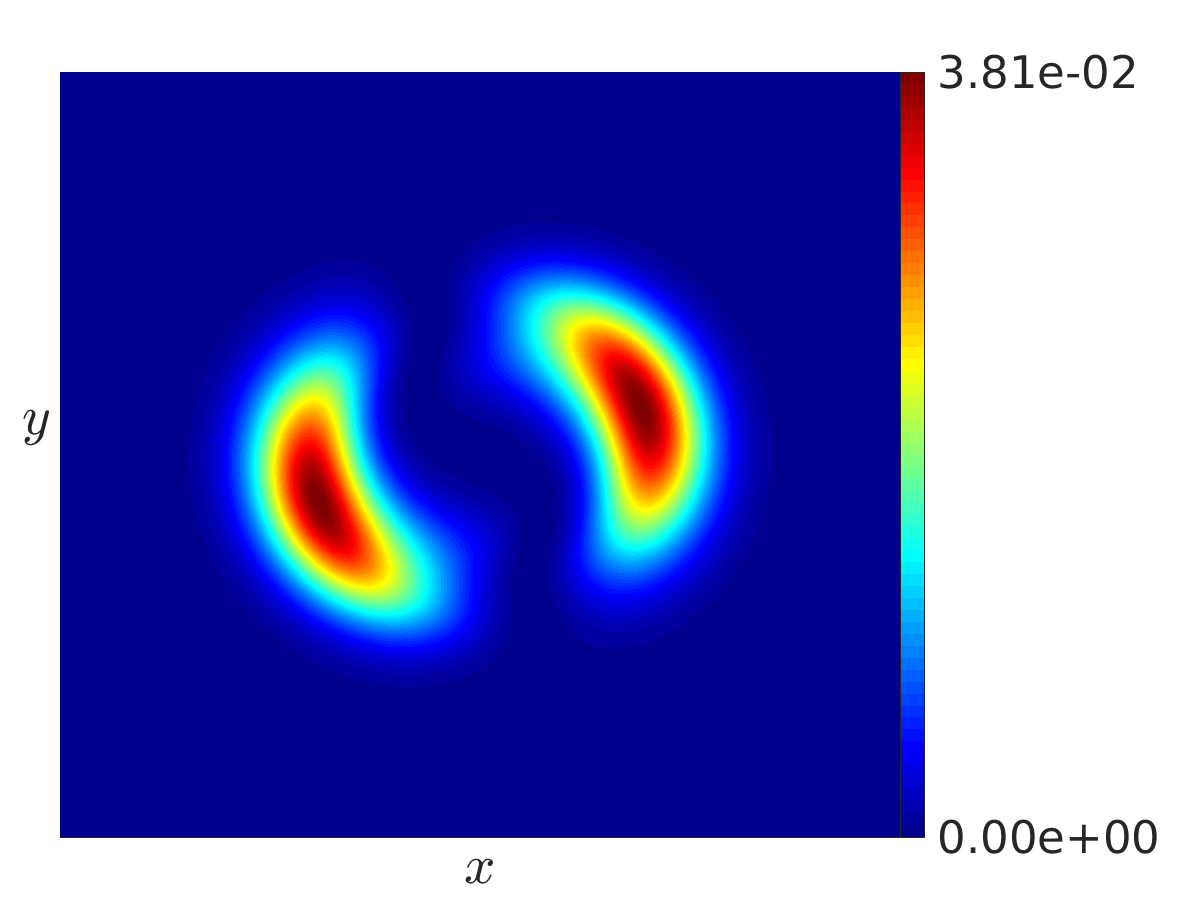}
		\caption{ \bf $u_1$ - Variance}
	\end{subfigure}	
	\begin{subfigure}{0.32\textwidth}
		\includegraphics[width=\textwidth]{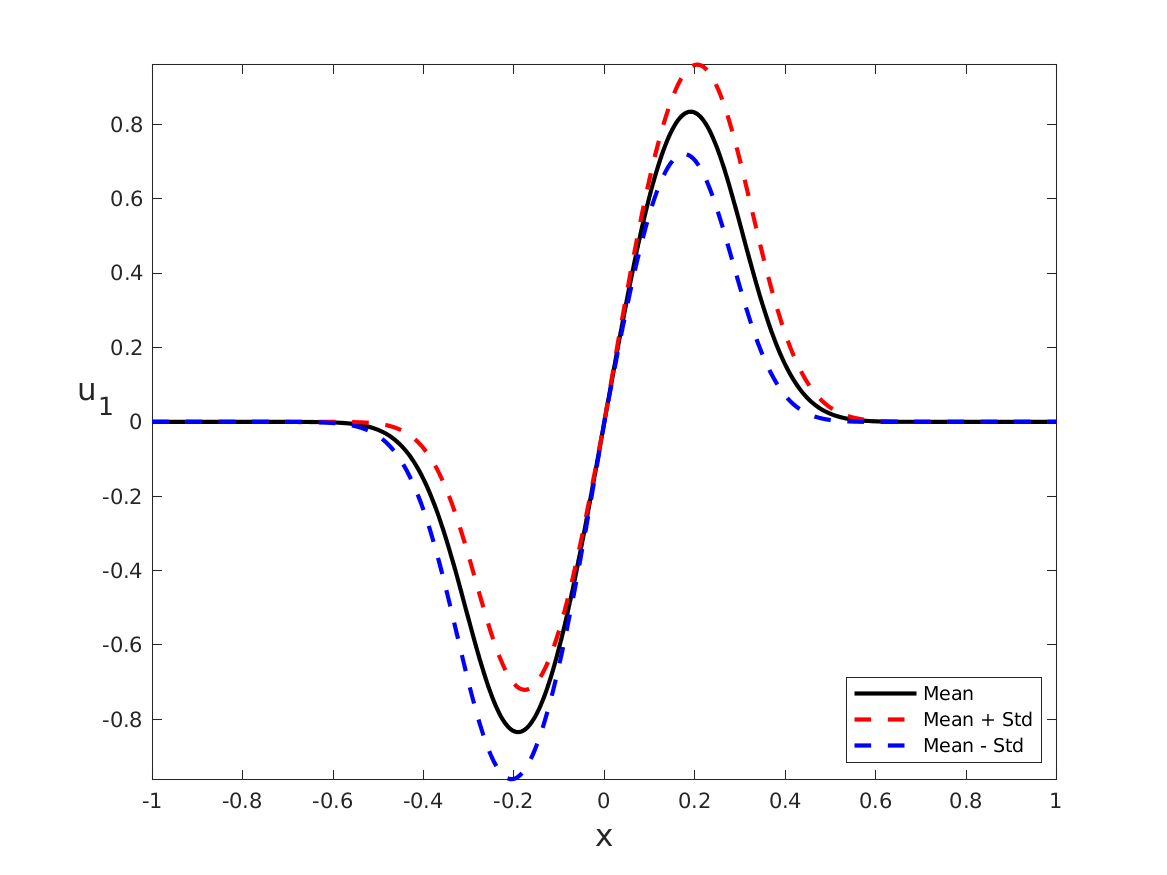}
		\caption{ \bf $u_1$}
	\end{subfigure}	\\
	\begin{subfigure}{0.32\textwidth}
		\includegraphics[width=\textwidth]{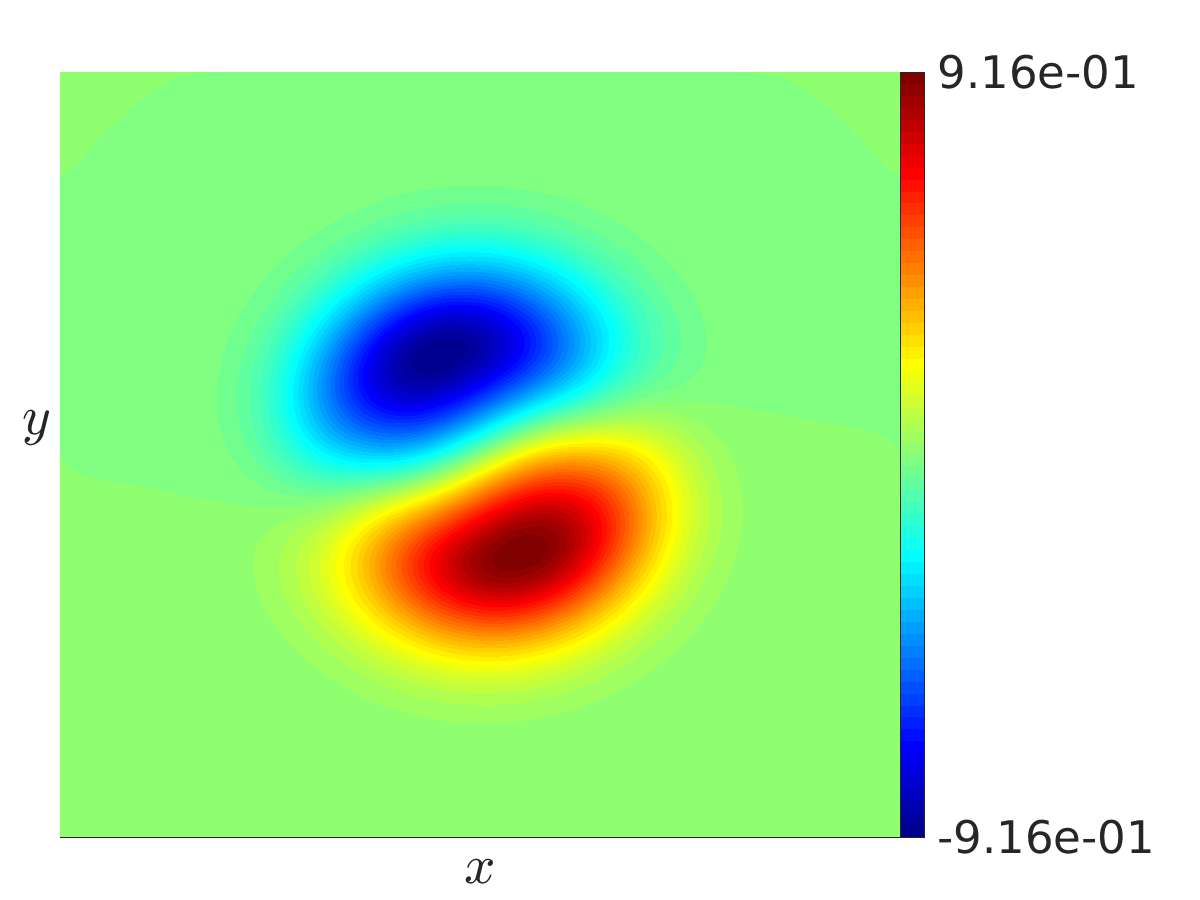}
		\caption{ \bf $u_2$ - Mean}
	\end{subfigure}	
	\begin{subfigure}{0.32\textwidth}
		\includegraphics[width=\textwidth]{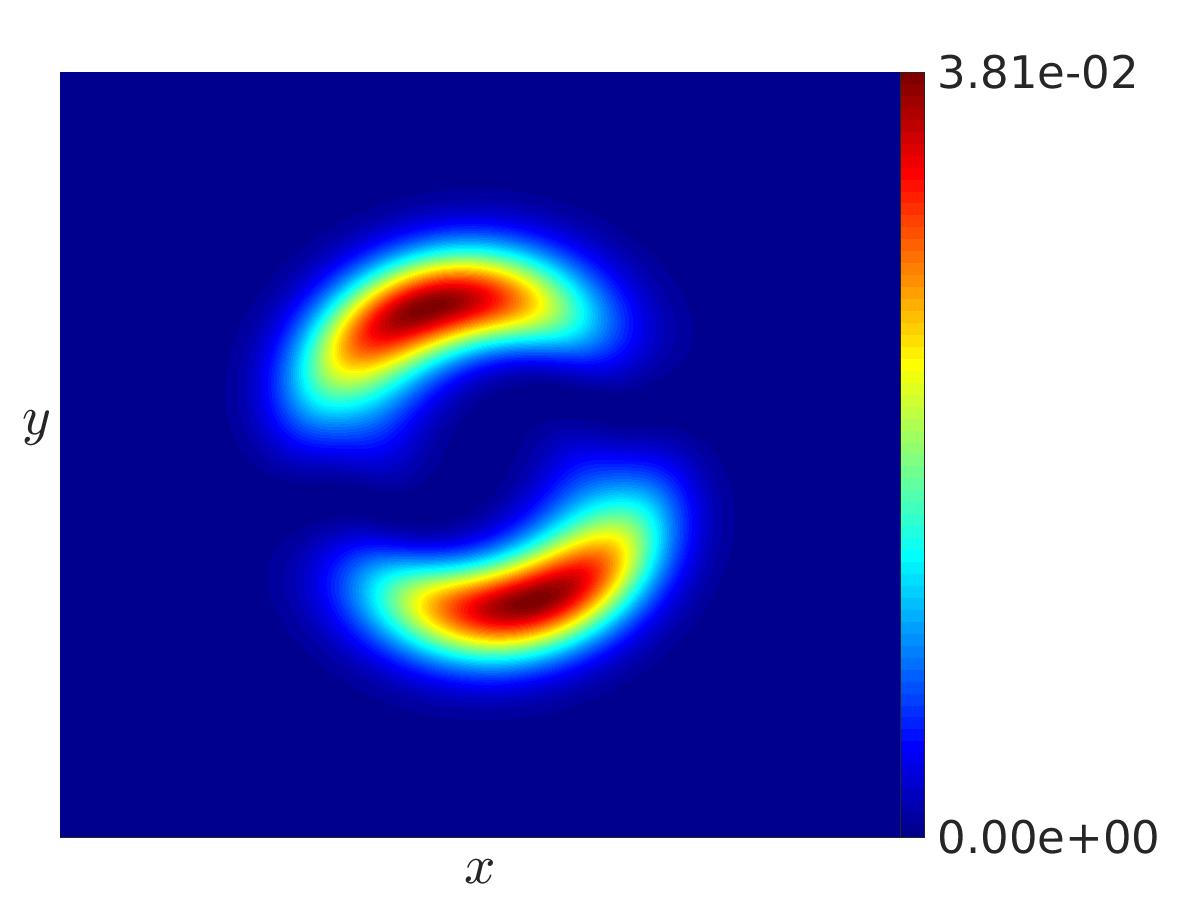}
		\caption{ \bf $u_2$ - Variance}
	\end{subfigure}	
	\begin{subfigure}{0.32\textwidth}
		\includegraphics[width=\textwidth]{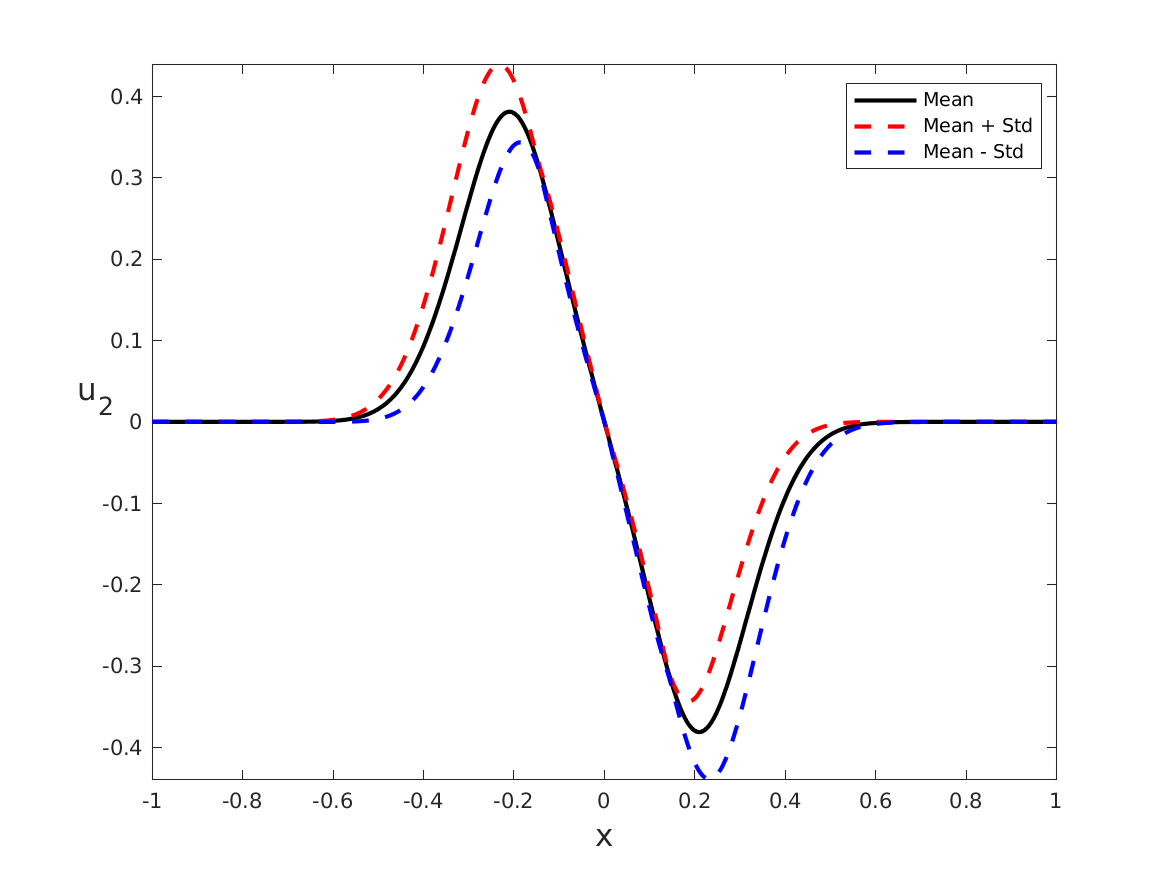}
		\caption{ \bf $u_2$}
	\end{subfigure}	
	\caption{  \small{Example \ref{example3}: Numerical solutions obtained by the FV method. Left: Mean-value of $\vr, \vu$; Middle: Deviation/variance of $\vr, \vu$; Right:  $\vr, \vu$ at the line $x = y$.}}\label{ex3}
\end{figure}

\begin{figure}[htbp]
	\setlength{\abovecaptionskip}{0.cm}
	\setlength{\belowcaptionskip}{-0.cm}
	\centering
		\includegraphics[width=\textwidth]{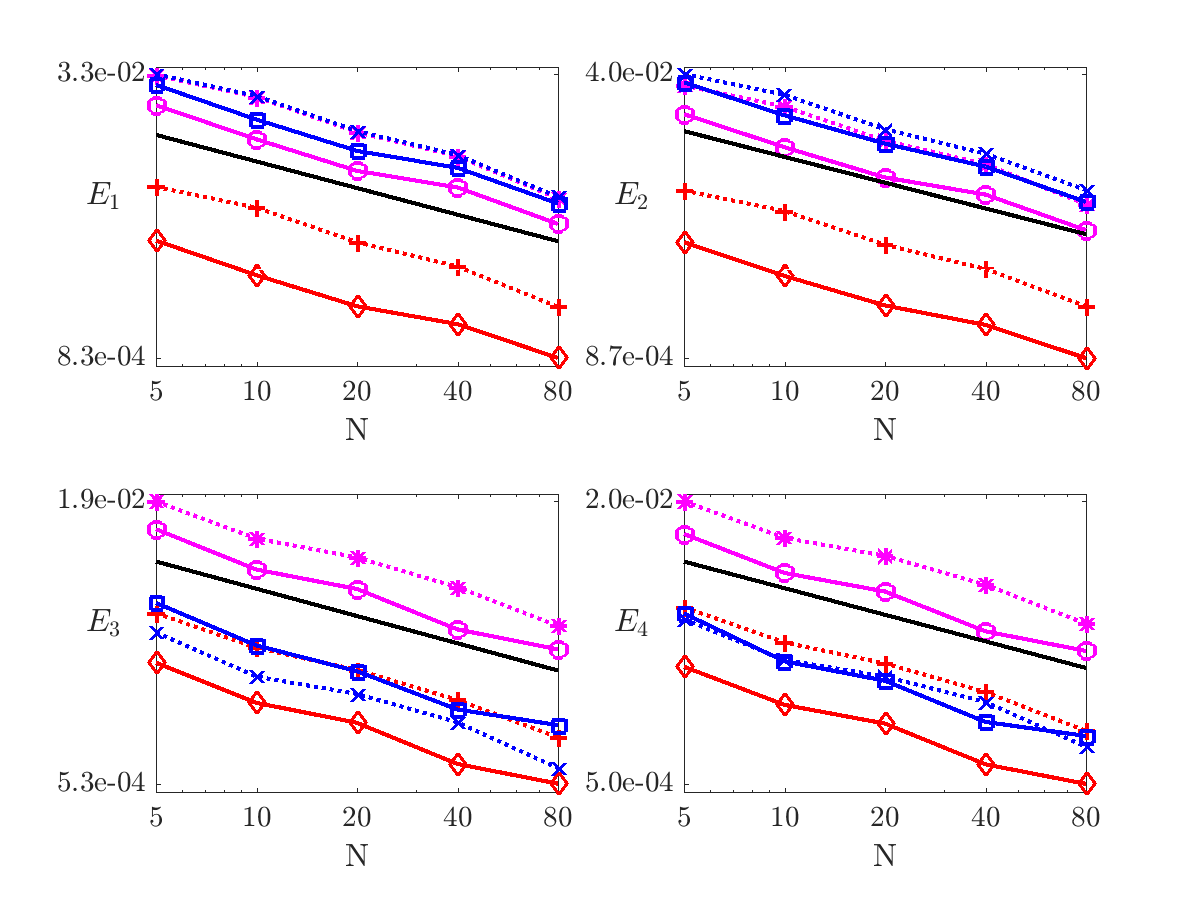}\\
		\includegraphics[width=0.6\textwidth]{graphs/legend}	
	\caption{  \small{Example \ref{example3}:  Statistical errors of the mean ($E_1, E_2$) and the deviation/variance ($E_3, E_4$).
	The black solid lines without any marker denote the reference slope of $N^{-1/2}$.  }}\label{ex3-dev}
\end{figure}

%

\section{Concluding remarks}
It is well-known that strong solutions of the compressible Navier--Stokes equations are uniquely determined by data, however these solutions may exist only locally in time.
In addition, the recent results of Buckmaster et al.~\cite{BuCLGS} and Merle et al.~\cite{MeRaRoSz}  indicate that originally
regular solutions may develop a blow up in a finite time. Further remarkable result of Sun, Wang, and Zhang \cite{SuWaZh} confirms that if a (local) strong solution remains bounded on the whole time interval then the strong solution is global in time.
Taking these results into account we anticipate that regularity of the Navier--Stokes solutions is a generic property and perform statistical analysis of the random compressible Navier--Stokes equations by means of the Monte Carlo finite volume method. The analysis is done under a rather weak assumption
that the numerical solutions are bounded in probability, cf.~hypothesis~\eqref{bip}.

In Theorem~\ref{FVT1} we have proved  the convergence of the Monte Carlo finite volume method applying intrinsic stochastic
compactness arguments via the Skorokhod representation theorem
and the Gy\"ongy–Krylov method. Consequently, we have proved that under the boundedness hypothesis~\eqref{bip} a statistical  solution of the Navier--Stokes system exists, more precisely, the strong solution exists $\prst-$a.s. Consequently, we could generalize the deterministic error estimates obtained for the finite volume method \eqref{scheme}, cf.~Proposition~\ref{PF2},  to the
Monte Carlo finite volume method. In Theorem~\ref{FVT2} we present the error estimates consisting of the statistical errors estimated in the expected values and the approximation errors estimated in probability.
Convergence of the deviation for the density, momentum and the variance of the velocity is proved as well, see Corollary~\ref{coro}.
The results are presented for the finite volume discretization in space and time, cf.~\eqref{scheme} but any other consistent approximation satisfying structure preserving properties \eqref{positivity}, \eqref{dei} can be used well. In particular, our theoretical results directly generalize to the Marker-and-Cell finite difference method, cf.~\cite{BS_2}.
Numerical experiments presented in Section~\ref{num} confirm theoretical results.

\appendix
\section{Appendix}\label{APDA}
For completeness, we provide the proof of \eqref{SP}.
\begin{proof}
First, let us denote $\bfF_u = \frac{1}{|\Td|}\intTd{\vu}$.
By Poincar\'e's inequality we have
\[ \norm{\vu - \bfF_u }_{L^q(\Td;R^d)} \aleq  \norm{\Grad \vu }_{L^2(\Td;R^{d\times d})}, \]
where  $1 \leq q < \infty$ for $d=2$ and $1 \leq q \leq 6$ for $d=3.$
This implies
\begin{align*}
  \norm{\vu}_{L^q(\Td)}^2  \aleq  \norm{\Grad \vu }_{L^2(\Td;R^{d\times d})}^2  +  |\bfF_u|^2.
\end{align*}
Recalling the assumption that  $\min{ \vr_0 } = \underline{\vr} > 0$ and the property of mass conservation we have
 $\intTd{ \vr (t, \cdot)} \geq |\Td| \underline{\vr} > 0$  for $t \in (0,T)$ and consequently,
\begin{align*}
& |\bfF_\vu|^2  = \frac{1 }{\intTd{\vr}} \intTd{\vr}  |\bfF_\vu|^2 \aleq
 \intTd{\vr  |\bfF_u|^2  } \aleq   \intTd{\vr  |\bfF_u  - \vu |^2  }  +  \intTd{\vr  |  \vu |^2  }
 \\& \aleq  \norm{\vr}_{L^{\gamma}(\Td)} \norm{\vu - \bfF_u }_{L^6(\Td;R^d)}^2 + \intTd{\vr  |  \vu |^2  }
  \aleq  \norm{\vr}_{L^{\gamma}(\Td)} \norm{\Grad \vu }_{L^2(\Td;R^{d\times d})}^2 + \intTd{\vr  |  \vu |^2  }.
\end{align*}
Using the uniform energy bounds \eqref{energy_bb} and \eqref{bounds1} we further obtain
\begin{align*}
& \intT{  \norm{\vu}_{L^q(\Td)}^2 } \aleq   \intT{\norm{\Grad \vu }_{L^2(\Td;R^{d\times d})}^2  }
+  \norm{\vr}_{L^\infty(0,T; L^{\gamma}(\Td))}\intT{ \norm{\Grad \vu }_{L^2(\Td;R^{d\times d})}^2}
\br
& + \sup_{\tau \in (0,T)}\intTd{\vr  |  \vu |^2 (\tau, \cdot) }\aleq  1 + \left(\intTd{E(\vr_0, \vm_0)}\right)^\frac{\gamma + 1}{\gamma},
\end{align*}
for $1\leq q \leq 6$ if $d=3$ and $1 \leq q < \infty$ for $d=2.$
\end{proof}

\def\cprime{$'$} \def\ocirc#1{\ifmmode\setbox0=\hbox{$#1$}\dimen0=\ht0
  \advance\dimen0 by1pt\rlap{\hbox to\wd0{\hss\raise\dimen0
  \hbox{\hskip.2em$\scriptscriptstyle\circ$}\hss}}#1\else {\accent"17 #1}\fi}


\begin{thebibliography}{10}
\bibliographystyle{plain}
\bibitem{babuska}
I.~Babu\v{s}ka, F.~Nobile, and R.~Tempone.
\newblock A stochastic collocation method for elliptic partial differential
  equations with random input data.
\newblock {\em SIAM Rev.}, {\bf 52}(2):317--355, 2010.

\bibitem{klingel}
J.~Badwaik, C.~Klingenberg, N.~H. Risebro, and A.M. Ruf.
\newblock Multilevel {M}onte {C}arlo finite volume methods for random
  conservation laws with discontinuous flux.
\newblock {\em ESAIM Math. Model. Numer. Anal.}, 55(3):1039--1065, 2021.

\bibitem{BenTem}
A.~Bensoussan and R.~Temam.
\newblock \'{E}quations stochastiques du type {N}avier-{S}tokes.
\newblock {\em J. Functional Analysis}, 13:195--222, 1973.

\bibitem{BrFeHo2016}
D.~Breit, E.~Feireisl, and M.~Hofmanov{\'a}.
\newblock Local strong solutions to the stochastic compressible
  {N}avier--{S}tokes system.
\newblock {\em Comm. Partial Differential Equations}, 43(2):313--345, 2018.

\bibitem{BuCLGS}
T.~Buckmaster, G.~Cao-{L}abora, and J.~G{\' o}mez-{S}errano.
\newblock Smooth imploding solutions for 3{D} compressible fluid.
\newblock 2022.
\newblock {Preprint}.

\bibitem{FanFei}
F.~Fanelli and E.~Feireisl.
\newblock Statistical solutions to the barotropic {N}avier-{S}tokes system.
\newblock {\em J. Stat. Phys.}, {\bf 181}(1):212--245, 2020.

\bibitem{EF70}
E.~Feireisl.
\newblock {\em Dynamics of viscous compressible fluids}.
\newblock Oxford University Press, Oxford, 2004.

\bibitem{FeiLukMizShe}
E.~Feireisl, M.~Luk{\' a}{\v c}ov{\' a}-{M}edvid'ov{\' a}, H.~Mizerov{\' a},
  and B.~She.
\newblock Convergence of a finite volume scheme for the compressible
  {N}avier--{S}tokes system.
\newblock {\em ESAIM Math. Model. Numer. Anal.}, {\bf 53}(6):1957--1979, 2019.

\bibitem{FeLMMiSh}
E.~Feireisl, M.~Luk{\'a}{\v c}ov{\'a}-Medvi{\softd}ov{\'a}, {H}. Mizerov{\'a},
  and B.~She.
\newblock {\em Numerical {A}nalysis of {C}ompressible {F}luid {F}lows}.
\newblock Springer-Verlag, Cham, 2021.

\bibitem{FeiLuk}
E.~Feireisl and M.~Luk\'{a}\v{c}ov\'{a}-Medvi{\softd}ov\'{a}.
\newblock Convergence of a mixed finite element--finite volume scheme for the
  isentropic {N}avier-{S}tokes system via dissipative measure-valued solutions.
\newblock {\em Found. Comput. Math.}, {\bf 18}(3):703--730, 2018.

\bibitem{FLS_error}
E.~Feireisl, M.~Luk\'{a}\v{c}ov\'{a}-Medvi{\softd}ov\'{a}, and B.~She.
\newblock Improved error estimates for the finite volume and the {MAC} scheme
  for the compressible {N}avier--{S}tokes system.
\newblock {\em Archive Preprint Series}, 2022.

\bibitem{FeiLuk2021}
E.~Feireisl and M.~Luk\'a\v{c}ov\'a-{M}edvi\softd ov\'a.
\newblock Convergence of a stochastic collocation finite volume method for the
  compressible {N}avier--{S}tokes system.
\newblock {\em Archive Preprint Series}, 2021.
\newblock {\bf Arxiv preprint No. 07435}.

\bibitem{FNP}
E.~Feireisl, A.~Novotn{\' y}, and H.~Petzeltov{\' a}.
\newblock On the existence of globally defined weak solutions to the
  {N}avier-{S}tokes equations of compressible isentropic fluids.
\newblock {\em J. Math. Fluid Mech.}, {\bf 3}:358--392, 2001.

\bibitem{FjLyMiWe}
U.~S. Fjordholm, K.~Lye, S.~Mishra, and F.~Weber.
\newblock Statistical solutions of hyperbolic systems of conservation laws:
  numerical approximation.
\newblock {\em Math. Models Methods Appl. Sci.}, {\bf 30}(3):539--609, 2020.

\bibitem{Gkrylov}
I.~Gy{\"o}ngy and N.~Krylov.
\newblock Existence of strong solutions for {I}t\^o's stochastic equations via
  approximations.
\newblock {\em Probab. Theory Related Fields}, {\bf 105}(2):143--158, 1996.

\bibitem{BS_1}
R.~Ho\v{s}ek and B.~She.
\newblock Stability and consistency of a finite difference scheme for
  compressible viscous isentropic flow in multi-dimension.
\newblock {\em J. Numer. Math.}, 26(3):111--140, 2018.

\bibitem{Jakub}
A.~Jakubowski.
\newblock The almost sure {S}korokhod representation for subsequences in
  nonmetric spaces.
\newblock {\em Teor. Veroyatnost. i Primenen.}, {\bf 42}(1):209--216, 1997.

\bibitem{weber}
U.~Koley, N.H. Risebro, C.~Schwab, and F.~Weber.
\newblock A multilevel {M}onte {C}arlo finite difference method for random
  scalar degenerate convection-diffusion equations.
\newblock {\em J. Hyperbolic Differ. Equ.}, 14(3):415--454, 2017.

\bibitem{LedTal}
M.~Ledoux and M.~Talagrand.
\newblock {\em Probability in {B}anach spaces}, volume~23 of {\em Ergebnisse
  der Mathematik und ihrer Grenzgebiete (3) [Results in Mathematics and Related
  Areas (3)]}.
\newblock Springer-Verlag, Berlin, 1991.
\newblock Isoperimetry and processes.

\bibitem{LI}
P.-L. Lions.
\newblock {\em Mathematical topics in fluid dynamics, Vol.1, Incompressible
  models}.
\newblock Oxford Science Publication, Oxford, 1996.

\bibitem{LI4}
P.-L. Lions.
\newblock {\em Mathematical topics in fluid dynamics, Vol.2, Compressible
  models}.
\newblock Oxford Science Publication, Oxford, 1998.

\bibitem{MANI}
A.~Matsumura and T.~Nishida.
\newblock The initial value problem for the equations of motion of compressible
  and heat conductive fluids.
\newblock {\em Comm. Math. Phys.}, {\bf 89}:445--464, 1983.

\bibitem{MeRaRoSz}
F~Merle, P.~Raphael, I.~Rodnianski, and J.~Szeftel.
\newblock On the implosion of a three dimensional compressible fluid.
\newblock {\em Arxive Preprint Series}, 2019.
 {\bf Arxiv preprint No. 1912.11009},

\bibitem{Mishra_Schwab}
S.~Mishra and Ch. Schwab.
\newblock Sparse tensor multi-level {M}onte {C}arlo finite volume methods for
  hyperbolic conservation laws with random initial data.
\newblock {\em Math. Comp.}, {\bf 81}(280):1979--2018, 2012.

\bibitem{BS_2}
H.~Mizerov\'{a} and B.~She.
\newblock Convergence and error estimates for a finite difference scheme for
  the multi-dimensional compressible {N}avier-{S}tokes system.
\newblock {\em J. Sci. Comput.}, 84(1):No. 25, 2020.

\bibitem{SuWaZh}
Y.~Sun, C.~Wang, and Z.~Zhang.
\newblock A {B}eale-{K}ato-{M}ajda criterion for three dimensional compressible
  viscous heat-conductive flows.
\newblock {\em Arch. Ration. Mech. Anal.}, {\bf 201}(2):727--742, 2011.

\bibitem{TAN}
A.~Tani.
\newblock On the first initial-boundary value problem of compressible viscous
  fluid motion.
\newblock {\em Publ. RIMS Kyoto Univ.}, {\bf 13}:193--253, 1977.

\bibitem{VAZA}
A.~Valli and M.~Zajaczkowski.
\newblock {N}avier-{S}tokes equations for compressible fluids: Global existence
  and qualitative properties of the solutions in the general case.
\newblock {\em Commun. Math. Phys.}, {\bf 103}:259--296, 1986.

\end{thebibliography}
\end{document}